\DeclareMathAlphabet{\eusm}{U}{}{}{}  
\SetMathAlphabet\eusm{normal}{U}{eus}{m}{n}
\SetMathAlphabet\eusm{bold}{U}{eus}{b}{n}
\DeclareMathAlphabet{\eufrak}{U}{}{}{}  
\SetMathAlphabet\eufrak{normal}{U}{euf}{m}{n}
\SetMathAlphabet\eufrak{bold}{U}{euf}{b}{n}
\newtheorem{theorem}{Theorem}[section]
\newtheorem{proposition}[theorem]{Proposition}
\newtheorem{lemma}[theorem]{Lemma}
\newtheorem{corollary}[theorem]{Corollary}
\theoremstyle{definition}
\newtheorem{definition}[theorem]{Definition}
\newtheorem{example}[theorem]{Example}
\theoremstyle{remark}
\newtheorem{remark}[theorem]{Remark}
\numberwithin{equation}{section}
\newcommand{\1}{{\bf 1}}
\newcommand{\A}{\mathcal{A}}
\newcommand{\ad}{{\rm ad}}
\newcommand{\C}{\mathbb{C}}
\newcommand{\e}{\varepsilon}
\newcommand{\E}{\mathcal{E}}
\newcommand{\G}{\mathbb{G}}
\newcommand{\id}{{\rm id}}
\newcommand{\op}{{{\rm op}}}
\newcommand{\Q}{\mathcal{Q}}
\newcommand{\R}{\mathbb{R}}
\begin{document}
\title[Symmetries]{Symmetries of L\'evy processes\\ 
on compact quantum groups, \\
their Markov semigroups \\
and potential theory}

\author{Fabio Cipriani}
\address{FC: Dipartimento di Matematica, Politecnico di Milano, Piazza Leonardo da Vinci
32, 20133 Milan, Italy}
\email{fabio.cipriani@polimi.it}

\author{Uwe Franz}
\address{UF: D\'epartement de math\'ematiques de Besan\c{c}on,
Universit\'e de Franche-Comt\'e 16, route de Gray, 25 030
Besan\c{c}on cedex, France}
\email{uwe.franz@univ-fcomte.fr}
\urladdr{http://lmb.univ-fcomte.fr/uwe-franz}

\author{Anna Kula}
\address{AK: Instytut Matematyczny, Uniwersytet Wroc{\l}awski,  pl. Grunwaldzki
2/4, 50-384 Wroc³aw, Poland
\ and \ Instytut Matematyki, Jagiellonian University,  ul. {\L}ojasiewicza 6, 30
348 Krak{\'o}w, Poland
}
\email{anna.kula@math.uni.wroc.pl}

\begin{abstract}
Strongly continuous semigroups of unital completely positive maps (i.e.\ quantum
Markov semigroups or quantum dynamical semigroups) on compact quantum groups are
studied. We show that quantum Markov semigroups on the universal or reduced
C${}^*$-algebra of a compact quantum group which are translation invariant
(w.r.t.\ to the coproduct) are in one-to-one correspondence with L\'evy
processes on its $*$-Hopf algebra. We use the theory of L\'evy processes on
involutive bialgebras to characterize symmetry properties of the associated
quantum Markov semigroup. It turns out that the quantum Markov semigroup is
GNS-symmetric (resp.\ KMS-symmetric) if and only if the generating functional
of the L\'evy process is invariant under the antipode (resp.\ the unitary
antipode). Furthermore, we study L\'evy processes whose marginal states are
invariant under the adjoint action. In particular, we give a complete
description of generating functionals on the free orthogonal quantum group
$O_n^+$ that are invariant under the adjoint action. Finally, some aspects of
the potential theory are investigated. We describe how the Dirichlet form and a
derivation can be recovered from a quantum Markov semigroup and its L\'evy
process and we show how, under the assumption of GNS-symmetry and using the associated Sch\"urmann triple, this gives rise
to spectral triples. We discuss in details how the above results apply to
compact groups, group C$^*$-algebras of countable discrete groups, free orthogonal quantum
groups $O_n^+$ and the twisted $SU_q (2)$ quantum group.
\end{abstract}

\keywords{Compact quantum group, Dirichlet form, L\'evy process, quantum Markov
semigroup, spectral triple}
\subjclass[2010]{20G42,43A05,46L57,60G51,81R50}
\maketitle

\setcounter{tocdepth}{1}
\tableofcontents

\section{Introduction}
\label{sec-intro}

Quantum Markov semigroups, i.e. semigroups of contractive, completely positive maps,
are mathematical models for open quantum systems. In this paper we study a class of such
semigroups on the C${}^*$-algebras of compact quantum groups (also called
Woronowicz C${}^*$-algebras).

A compact quantum group $\mathbb{G}$ is a unital C${}^*$-algebra C$(\mathbb{G})$
equipped with additional
structure, that generalizes the C${}^*$-algebra of continuous functions on a
compact group (see Section \ref{sec-prelim} for the precise definition). In
particular, any commutative compact quantum group is isomorphic to the
C${}^*$-algebra of continuous functions on a compact group.
The quantum group structure thus plays two roles: on one hand, positive
functionals on C$(\mathbb{G})$ replace
the states of a classical Markov process and, on the other hand, the actions of C$(\mathbb{G})$ on
itself, allow us to formulate important symmetry properties for quantum Markov semigroups.

We show that quantum Markov semigroups on the (reduced or universal)
C${}^*$-algebra of a compact quantum group that are translation invariant
w.r.t. the coproduct are in one-to-one correspondence with L\'evy processes on
its $*$-Hopf algebra $\A$, see Theorems \ref{extension-to-reduced} and
\ref{prop-trans-inv}. This shows that the characterisation of L\'evy processes in topological groups as the Markov processes which are invariant under time and space translations extends to compact quantum groups. In particular, if the compact quantum group is commutative, the associated stochastic processes reduce to L\'evy processes with values in a compact group, i.e., stochastic processes with stationary and independent increments.

In general, a KMS-symmetry property of a quantum Markov semigroup on a
C$^*$-algebra with respect to given KMS state on it,
allows to study the semigroup on the scale of associated noncommutative
$L_p$-spaces. On compact quantum groups the natural
state to refer to is the unique translation invariant state: it is called the
Haar state because it reduces to the Haar measure of compact group when
C$(\mathbb{G})$ is commutative.
In Section \ref{sec_gns_kms} we show that the quantum Markov semigroup is
KMS-symmetric (with respect to the Haar state) if and only if the generating
functional of its associated L\'evy process is invariant under the unitary
antipode, and that the quantum Markov semigroup satisfies the stronger condition
of GNS-symmetry if and only if the generating functional of its associated
L\'evy process is invariant under the antipode.

In Section \ref{sec_schurmann} we characterize the Sch\"urmann triples of KMS-symmetric L\'evy processes.

In the classical literature on Brownian motion or L\'evy processes on (simple) Lie groups, the analysis of their invariance under the adjoint action of the group on itself has been particularly intense. We formulate
this invariance property for compact quantum groups and show that it imposes a
very strong restriction. In Section \ref{sec_adinv} we develop a method that allows to determine
ad-invariant generating functionals on compact quantum groups of Kac type, i.e. when the Haar state is a trace.
Using this method we find a complete classification of the ad-invariant L\'evy processes on the free orthogonal quantum groups in Section \ref{sec-On}.

In Section 7 we give a complete description of the Dirichlet form associated to
KMS symmetric L\'evy processes and, in the GNS symmetric case,
a characterization of the associated quadratic forms on the Hopf algebra $\A$, arising in this way,
in terms of their translation invariance.

In the framework of Alain Connes' noncommutative geometry \cite{connes94},
efforts have been directed towards the construction and investigation
of Dirac operators and spectral triples on the Woronowicz quantum groups $SU_q (2)$ and related homogeneous noncommutative
spaces (see for example \cite{chakraborty+pal03a}, \cite{chakraborty+pal03b},
\cite{connes04a}, \cite{connes04b}, \cite{dabrowski+landi+all05}).
The relevance of this point of view relies in the fact that a spectral
triple allows to construct topological invariants as cyclic cocycles in
cyclic cohomology and local  couplings with the K-theory of the C$^*$-algebra (Connes-Chern character).

In Section 8 we construct a Hilbert bimodule derivation, giving rise to differential calculus on the compact quantum group C$^*$-algebra C$(\mathbb{G})$,
which, in the GNS symmetric case, allows to represent the Dirichlet form as a generalized Dirichlet integral
\[
\E[a]=\frac{1}{2}\|da\|^2\, .
\]
Using the derivation, we then construct a Dirac operator $D$ whose spectrum is explicitly  determined by the spectrum of the Dirichlet form
on the GNS Hilbert space of the Haar state. Later we show that the Dirac operator $D$ is part of spectral triple with respect to which the elements of the Hopf algebra $\A$
are Lipschitz.

In the last three Sections 9, 10, 11, we discuss in detail examples of the above constructions on
compact Lie groups, group C$^*$-algebras of countable discrete groups, the free orthogonal
quantum groups $O_N^+$ and the Woronowicz quantum groups $SU_q (2)$.

\section{Preliminaries}
\label{sec-prelim}

 Sesquilinear forms will be linear in the right entry. For an algebra $\A$,
$\A'$ will denote the algebraic dual of $\A$, i.e. the space of all linear
functionals from $\A$ to $\C$. For a C${}^*$-algebra $\mathsf{A}$, by
$\mathsf{A}'$ we will mean the dual space of all linear continuous functionals
on $\mathsf{A}$. The symbol $\otimes$ will denote the spatial tensor
product of C${}^*$-algebras and $\odot$ the algebraic tensor product, see, e.g.,
\cite{pedersen79} for tensor products and other facts about C${}^*$-algebras.

\subsection{Compact Quantum Groups}

The notion of compact quantum groups has been introduced in
\cite{woronowicz87}. Here we adopt the definition from \cite{woronowicz98}
(Definition 1.1 of that paper).

\begin{definition}
A \emph{C${}^*$-bialgebra} (a compact quantum semigroup) is a pair $(\mathsf{A}, \Delta)$, where
$\mathsf{A}$ is a unital C${}^*$-algebra,
 $\Delta:\mathsf{A} \to \mathsf{A} \otimes \mathsf{A}$ is a unital,
 $*$-homomorphic map which is coassociative, i.e.\
\[ (\Delta \otimes \textup{id}_{\mathsf{A}})\circ \Delta = (\textup{id}_{\mathsf{A}} \otimes \Delta)
\circ\Delta.
\]
If the quantum cancellation properties
\[
\overline{{\rm Lin}}((1\otimes \mathsf{A})\Delta(\mathsf{A}) ) = \overline{{\rm Lin}}((\mathsf{A} \otimes 1)\Delta(\mathsf{A}) )
= \mathsf{A} \otimes \mathsf{A},
\]
are satisfied, then the pair $(\mathsf{A}, \Delta)$ is called a \emph{compact
  quantum group} (CQG).
\end{definition}

If the algebra $\mathsf{A}$ of a compact quantum group is commutative, then
$\mathsf{A}$ is isomorphic to the algebra $C(G)$ of continuous functions on a
compact group $G$. To emphasis that for an arbitrary (i.e.\ not necessarily
non-commutative) compact quantum group $(\mathsf{A},\Delta)$ the algebra
$\mathsf{A}$ replaces the algebra of continuous functions on an (abstract)
quantum analog of a group, the notation $\G=(\mathsf{A},\Delta)$ and
$\mathsf{A}=C(\G)$ is also frequently used.

The map $\Delta$ is called the \emph{coproduct} of $\mathsf{A}$ and it induces
the convolution product of functionals
\[
\lambda\star \mu:=(\lambda\otimes \mu)\circ\Delta, \;\;\;
\lambda,\mu\in\mathsf{A}'.
\]

The following fact is of fundamental importance, cf.\ \cite[Theorem
2.3]{woronowicz98}.

\begin{proposition} \label{prop-haar}
Let $\mathsf{A}$ be a compact quantum group. There exists a unique state $h \in \mathsf{A}'$ (called the \emph{Haar state} of
$\mathsf{A}$) such that for all $a \in \mathsf{A}$
\[
(h \otimes \textup{id}_{\mathsf{A}})\circ  \Delta (a)  = h(a) 1=
(\textup{id}_{\mathsf{A}} \otimes h)\circ  \Delta (a).
\]
\end{proposition}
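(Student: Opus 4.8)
The plan is to split the statement into uniqueness and existence, and to treat uniqueness first, since it is short and also explains why a one-sided invariant state is automatically two-sided. Recall that the convolution $\lambda\star\mu=(\lambda\otimes\mu)\circ\Delta$ turns the weak${}^*$-compact convex state space of $\mathsf{A}$ into a semigroup. If $h_1$ is \emph{left} invariant, i.e.\ $(h_1\otimes\id)\Delta(a)=h_1(a)1$, and $h_2$ is \emph{right} invariant, i.e.\ $(\id\otimes h_2)\Delta(a)=h_2(a)1$, then evaluating $h_1\star h_2=(h_1\otimes h_2)\circ\Delta$ in the two possible orders gives $h_1\star h_2=h_1$ and $h_1\star h_2=h_2$, whence $h_1=h_2$. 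Thus any left invariant state equals any right invariant state, the two-sided invariant state is unique once it exists, and it suffices to produce a left invariant and a right invariant state.

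For existence I would fix a faithful state $\omega$ on $\mathsf{A}$ and form the Ces\`aro means $\omega_n=\frac1n\sum_{k=1}^n\omega^{\star k}$. By weak${}^*$-compactness the net $(\omega_n)$ has a cluster point $h$, again a state. Since $\omega\star\omega_n-\omega_n=\frac1n(\omega^{\star(n+1)}-\omega)$ has norm at most $2/n$, and $\mu\mapsto\omega\star\mu$ and $\mu\mapsto\mu\star h$ are weak${}^*$ continuous, passing to the limit along the cluster subnet yields $\omega\star h=h=h\star\omega$; iterating gives $\omega^{\star k}\star h=h$ for all $k$, and averaging gives $h\star h=h$. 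Hence $h$ is an idempotent state invariant under convolution by the single faithful state $\omega$.

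The heart of the matter, and the main obstacle, is to upgrade this partial invariance into full invariance $\mu\star h=h=h\star\mu$ for \emph{every} state $\mu$, equivalently $(\id\otimes h)\Delta(a)=h(a)1=(h\otimes\id)\Delta(a)$. This is exactly the point at which the quantum cancellation property, and not merely coassociativity, must be used, since on a general compact quantum \emph{semigroup} no invariant state need exist. Here I would study the unital completely positive map $E=(\id\otimes h)\circ\Delta$, which by coassociativity and $h\star h=h$ is idempotent ($E^2=E$) and satisfies $\Delta\circ E=(\id\otimes E)\circ\Delta$, so that its range $\mathsf{B}=E(\mathsf{A})$ is a norm-closed left coideal. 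From $h\circ E=h$, from faithfulness of $\omega$ (which forces $\omega=h$ on $\mathsf{B}$ via $\omega\circ E=h$), and from the Schwarz inequality $E(b^*b)\ge E(b)^*E(b)$, one checks that $\mathsf{B}$ is in fact a C${}^*$-subalgebra. It then remains to show $\mathsf{B}=\C 1$: this is where the density $\overline{{\rm Lin}}\big((\mathsf{A}\otimes1)\Delta(\mathsf{A})\big)=\mathsf{A}\otimes\mathsf{A}$ enters, by applying $\id\otimes h$ to elements $(c\otimes1)\Delta(a)$, using the identity $(\id\otimes h)\big[(c\otimes1)\Delta(a)\big]=c\,E(a)$ together with faithfulness, to force the coideal $\mathsf{B}$ to collapse to the scalars. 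The symmetric argument applied to $F=(h\otimes\id)\circ\Delta$ and the other density relation yields the left invariant equation, and by the uniqueness step the two sides coincide, producing the unique Haar state. I expect the collapse of the coideal $\mathsf{B}$ to $\C 1$ to be the delicate point, precisely because it is there that the distinction between a compact quantum group and a compact quantum semigroup is exploited.
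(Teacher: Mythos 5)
The paper itself gives no proof of this proposition: it is quoted from Woronowicz \cite[Theorem 2.3]{woronowicz98}, so your attempt has to be measured against the standard arguments (Woronowicz's original one via corepresentation theory, or Van Daele's direct proof). Your uniqueness argument is correct and standard, and so is the Ces\`aro-mean construction of an idempotent state $h$ with $\omega\star h=h=h\star\omega$ (modulo the remark that a faithful state $\omega$ exists only when $\mathsf{A}$ is separable; Van Daele avoids this by running the argument for each state separately and patching the resulting cluster points together). The problem is that the step you yourself identify as the heart of the matter --- upgrading invariance under the single state $\omega$ to full invariance --- is not actually carried out, and the two mechanisms you propose for it do not work as described.

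Concretely: (i) your route to showing that $\mathsf{B}=E(\mathsf{A})$ is a C${}^*$-subalgebra is circular. To get $E(b^*b)=b^*b$ for $b\in\mathsf{B}$ from faithfulness of $\omega$ and the Schwarz inequality $E(b^*b)\ge b^*b$, you need $\omega\bigl(E(b^*b)-b^*b\bigr)=0$, i.e.\ $\omega(b^*b)=\omega(E(b^*b))=h(b^*b)$; but the identity $\omega=h$ is only available on $\mathsf{B}$, and $b^*b\in\mathsf{B}$ is exactly what you are trying to prove. (ii) The proposed collapse of $\mathsf{B}$ to $\C 1$ extracts nothing from the cancellation property: the identity $(\id\otimes h)\bigl[(c\otimes 1)\Delta(a)\bigr]=c\,E(a)$ together with density of ${\rm Lin}\bigl((\mathsf{A}\otimes 1)\Delta(\mathsf{A})\bigr)$ only yields that $(\id\otimes h)(\mathsf{A}\otimes\mathsf{A})=\mathsf{A}$ lies in the closed span of $\mathsf{A}\cdot\mathsf{B}$, which is vacuously true because $1\in\mathsf{B}$; no contradiction with $\mathsf{B}\neq\C 1$ follows. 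The genuine use of cancellation in the known proofs is of a different, quantitative nature: for $x=E(a)-h(a)1$ one first establishes auxiliary identities such as $(\id\otimes\omega)\Delta(x)=x$ (from $\omega\circ E=\omega\star h=h$), then shows that a suitable positive element built from $Y=\Delta(x)-x\otimes 1$ has vanishing $(h\otimes\omega)$-expectation (here $h\star\omega=h$ enters), and finally pairs $Y$ against the dense subspace ${\rm Lin}\bigl((\mathsf{A}\otimes 1)\Delta(\mathsf{A})\bigr)$ via Cauchy--Schwarz to conclude $\omega(x^*x)=0$, whence $x=0$ by faithfulness. This chain of estimates is the actual content of the theorem and is missing from your sketch, so as it stands the existence half of the proof has a genuine gap.
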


The left (resp. right) part of the equation above are usually referred to as
left- (resp. right-) invariance property of the Haar state. In general, the
Haar state of a compact quantum group need not be faithful or tracial.

\subsection{Corepresentations}

An element $u =(u_{jk})_{1\le j,k\le n}\in M_n(\mathsf{A})$ is called an
{\emph{$n$-dimensional corepresentation of}} $\mathbb{G}=(\mathsf{A},\Delta)$ if
for all $j,k=1,\ldots,n$ we have $\Delta(u_{jk}) = \sum_{p=1}^n u_{jp} \otimes
u_{pk}$.  All corepresentations considered in this paper are supposed to be
finite-dimensional. A corepresentation $u$ is said to be \emph{non-degenerate}, if $u$ is
invertible, \emph{unitary}, if $u$ is unitary, and \emph{irreducible}, if the
only matrices $T\in M_n(\mathbb{C})$ with $Tu=uT$ are multiples of the identity
matrix. Two corepresentations $u,v\in M_n(\mathsf{A})$ are called
\emph{equivalent}, if there exists an invertible matrix $U\in M_n(\mathbb{C})$ such that $Uu=vU$.

An important feature of compact quantum groups is the existence of the dense
$*$-subalgebra $\mathcal{A}$ (the algebra of the \emph{polynomials} of $\mathsf{A}$), which is in fact a
Hopf $*$-algebra -- so for example $\Delta: \mathcal{A} \to \mathcal{A} \odot
\mathcal{A}$. With the notation $\G=(\mathsf{A},\Delta)$, the $*$-algebra
$\mathcal{A}$ is often denoted in the literature as ${\rm Pol}(\G)$.

Fix a complete family $(u^{(s)})_{s\in\mathcal{I}}$ of mutually
inequivalent irreducible unitary corepresentations of $\mathsf{A}$, then $\{u^{(s)}_{k\ell};
s\in\mathcal{I},1\le k,\ell\le n_s\}$ (where $n_s$ denotes the dimension of $u^{(s)}$) is
a linear basis of $\mathcal{A}$, cf.\ \cite[Proposition
5.1]{woronowicz98}. We shall reserve the index $s=0$ for the trivial
corepresentation $u^{(0)}=\mathbf{1}$.
The Hopf algebra structure on $\A$ is defined by
$$\e(u^{(s)}_{jk})=\delta_{jk}, \quad S(u^{(s)}_{jk})=(u^{(s)}_{kj})^*\quad \mbox{for} \;s\in\mathcal{I},\,j,k=1,\ldots,n_s,$$
where $\e:\A \to \C$ is the counit and $S:\A \to \A$ is the antipode. They satisfy
\begin{eqnarray}
  (\id \otimes \e) \circ \Delta  &=&\id= (\e \otimes \id) \circ \Delta, \label{counit_property}\\
m_\A \circ (\id \otimes S) \circ \Delta &=& \e(a)\1 = m_\A \circ (S \otimes \id) \circ \Delta,\label{antipode_property} \\
\big(S(a^*)^*\big) &=& a
\end{eqnarray}
for all $a\in \A$.
Let us also remind that the Haar state is always faithful on $\mathcal{A}$.

Set $V_s={\rm span}\,\{u^{(s)}_{jk};1\le j,k\le n_s\}$ for $s\in\mathcal{I}$. By
  \cite[Proposition 5.2]{woronowicz98}, there exists an irreducible unitary
  corepresentation $u^{(s^{\rm c})}$, called the \emph{contragredient}
  representation of $u^{(s)}$, such that $V_{s}^*=V_{s^{\rm c}}$. Clearly
  $(s^{\rm c})^{\rm c}=s$.

We shall frequently use \emph{Sweedler notation} for the coproduct of an element $a\in\mathcal{A}$, i.e.\ omit the summation and the index in the formula $\Delta(a)=\sum_{i} a_{(1),i}\otimes a_{(2),i}$ and write simply $\Delta(a)=a_{(1)}\otimes a_{(2)}$.

\subsection{The dual discrete quantum group}
\label{sec_dual}

To every compact quantum group $\mathbb{G}=(\mathsf{A},\Delta)$ there exists a
dual discrete quantum group $\hat{\mathbb{G}}$, cf.\ \cite{podles+woronowicz90}.
For our purposes it will be most convenient to introduce $\hat{\mathbb{G}}$
in the setting of Van Daele's algebraic quantum groups, cf.\
\cite{vandaele98,vandaele03}. However, the reader should be aware that we adopt
a slightly different convention for the Fourier transform.

\medskip
 A pair $(A, \Delta)$, consisting of a $*$-algebra $A$ (with or without
identity) and a coassociative comultiplication $\Delta:A \to M(A \odot A)$, is
called an \emph{algebraic quantum group} if the product is non-degenerate
(i.e. $ab=0$ for all $a$ implies $b=0$), if the two operators $T_1: A \odot A
\ni a\otimes b \mapsto \Delta(a)(b\otimes 1)\in A \odot A$ and $T_2: A \odot A
\ni a\otimes b \mapsto \Delta(a)(1\otimes b)\in A \odot A$ are well-defined
bijections and if there exists a nonzero left-invariant positive
functional on $A$. Here, $M(B)$ denotes the set of multipliers on $B$. We refer
the reader to \cite{vandaele98} for proofs and further details and will just recall a few facts here that we shall need later.

If $(\mathsf{A}, \Delta)$ is a compact quantum group then $(\A, \Delta|_\A)$ is
an algebraic quantum group (``of compact type'') and the Haar state is a
faithful left- and right-invariant functional.

For $a\in\mathcal{A}$ we can define $h_a\in \A'$ by the formula
$$ h_a(b) = h(ab) \quad \mbox{ for } b\in\A, $$
where $h$ is the Haar state, and we denote by $\hat{\A}$ the space of linear functionals on $\A$ of the form $h_a$ for $a\in \A$.

The set $\hat{\A}$ becomes an associative $*$-algebra with the convolution of
functionals as the multiplication: $ \lambda \star \mu = (\lambda \otimes \mu)
\circ \Delta$, and the involution $\lambda^*(x)=\overline{\lambda(S(x)^*)}$
($\lambda,\mu\in \hat{\A}$). Note that $\hat{\A}$ is closed under the
convolution by \cite[Proposition 4.2]{vandaele98}.
The Hopf structure is given as follows: the coproduct
$\hat{\Delta}$ is the dual of the product on $\A$, the antipode $\hat{S}$ is the
dual to $S$ and the counit $\hat{\e}$ is the evaluation in $\1$. In particular,
we have $\hat{S}(\lambda)(x) = \lambda(Sx)$ for $\lambda \in \hat{\A}$, $x\in
\A$ and
if $\hat{\Delta}(\lambda) \in \hat{\A} \odot \hat{\A}$ then
$$ \hat{\Delta}(\lambda) (x\otimes y) = \lambda_{(1)} (x) \otimes \lambda_{(2)}(y) = \lambda (xy), \quad x,y\in \A.$$
The pair $\hat{\mathbb{G}}=(\hat{\A},\hat{\Delta})$ is an algebraic quantum group, called the \emph{dual} of $\mathbb{G}$.

The linear map which associates to $a\in \A$ the functional $h_a\in \hat{\A}$
is called the \emph{Fourier transform}. Let us note that, due to the faithfulness of the Haar
state $h$, $\hat{\A}$ separates the points of $\A$.

\subsection{Woronowicz characters and modular automorphism group}
\label{woronowicz_characters}

A nice introduction to this part can be found in \cite{woronowicz87,woronowicz98},
\cite{klimyk+schmuedgen97} or \cite{timmermann08}.

For $a\in \mathsf{A}$, $\lambda\in \mathsf{A}'$ we define
\begin{eqnarray*}
\lambda \star a &=& ({\rm id}\otimes \lambda)\Delta(a), \\
a \star \lambda &=& (\lambda\otimes {\rm id})\Delta(a).
\end{eqnarray*}
If $a \in \A$ and $\lambda \in \A'$, then $\lambda \star a, a \star \lambda \in \A$.

For a compact quantum group $\mathsf{A}$ with dense $*$-Hopf algebra $\A$, there exists a unique family $(f_z)_{z\in \C}$ of linear multiplicative functionals on $\A$, called \emph{Woronowicz characters} (cf. \cite[Theorem 1.4]{woronowicz98}), such that
\begin{enumerate}
 \item $f_z(\1)=1$ for $z\in \C$,
 \item the mapping $\C\ni z \mapsto f_z(a) \in \C$ is an entire holomorphic function for all $a\in \A$,
 \item $f_0=\e$ and $f_{z_1}\star f_{z_2}= f_{z_1+z_2}$ for any $z_1, z_2 \in
\C$,
 \item $f_z(S(a)) = f_{-z}(a)$ and $f_{\bar{z}}(a^*) =
\overline{f_{-z}(a)}$ for any $z \in \C$, $a \in \A$,
 \item $S^2(a) = f_{-1}\star a \star f_1$ for $a\in \A$,
 \item the Haar state
$h$ satisfies:
$$ h(ab) = h(b(f_1 \star a \star f_1)), \quad a,b \in \A.$$
\end{enumerate}
The formulas
\begin{equation}\label{eq-mod-gr-Wor-ch}
\rho_{z,z'}(a) = f_{z} \star a \star f_{z'}, \quad \sigma_z=\rho_{iz,iz} \quad
\mbox{and} \quad \tau_z=\rho_{iz,-iz}
\end{equation}
define automorphisms of $\A$, in terms of which
$\sigma_t=\rho_{it,it}$ and $\tau_t=\rho_{it,-it}$, $t\in \R$, define one
parameter groups of automorphisms of $\A$. The former is known as \emph{modular
automorphism group}. Moreover, $h$ is the $(\sigma,-1)$-KMS state, which means
that it satisfies
\begin{equation} \label{KMS-state}
 h(ab) =h(b\sigma_{-i}(a)), \quad a,b\in \A,
\end{equation}
cf. \cite[Definition 5.3.1]{bratteli+robinson97} or \cite[Section
8.12]{pedersen79}. For $z,z'\in \C$,
$h(\rho_{z,z'}(a))=h(a)$, so
\begin{equation} \label{sigma-invariance}
h(\sigma_z(a))=h(\tau_z(a))=h(a), \quad a\in \A.
\end{equation}

The matrix elements of the irreducible unitary corepresentations satisfy the
famous generalized Peter-Weyl orthogonality relations
\begin{equation}\label{eq-pw}
h\left(\big(u^{(s)}_{ij}\big)^* u^{(t)}_{k\ell}\right) = \frac{\delta_{st}
\delta_{j\ell} f_{-1}\big (u^{(s)}_{ki}\big)}{D_s}, \qquad h\left(u^{(s)}_{ij} \big(u^{(t)}_{k\ell}\big)^*\right) = \frac{\delta_{st}
\delta_{ik} f_{1}\big (u^{(s)}_{\ell j}\big)}{D_s},
\end{equation}
where $f_1:\mathcal{A}\to\mathbb{C}$ is the Woronowicz character and
\[
D_s= \sum_{\ell=1}^{n_s} f_1\left(u_{\ell\ell}^{(s)}\right)
\]
is the quantum dimension of $u^{(s)}$, cf.\
\cite[Theorem 5.7.4]{woronowicz87}. Note that unitarity implies that the matrix
\[
\left(f_1\big((u^{(s)}_{jk})^*\big)\right)_{1 \le j,k\le n_s}\in M_{n_s}(\mathbb{C})
\]
is invertible, with inverse $\big(f_1(u^{(s)}_{jk})\big)_{jk}\in
M_{n_s}(\mathbb{C})$, cf.\ \cite[Equation (5.24)]{woronowicz87}.

\begin{remark} \label{rem_kac_type}
 The Haar state on a compact quantum group is a trace if and only if the
antipode is involutive, i.e.\ we have $S^2(a)=a$ for all $a\in\mathcal{A}$. In
this case we say that $(\mathsf{A},\Delta)$ is of Kac type. This is also
equivalent to the following equivalent conditions, cf.\ \cite[Theorem
1.5]{woronowicz98},
\begin{enumerate}
\item
$f_z=\varepsilon$ for all $z\in\mathbb{C}$,
\item
$\sigma_t={\rm id}$ for all $t\in\mathbb{R}$.
\end{enumerate}
\end{remark}

 The antipode $S$ and the automorphism $\tau_{\frac{i}{2}}$ are
closable operators on $\mathsf{A}$, and the closure $\overline{S}$ admits the
polar decomposition
\begin{equation} \label{decomposition_of_s}
 \overline{S} = R \circ T,
\end{equation}
where $T$ is the closure of $\tau_{\frac{i}{2}}$ and
$R:\mathsf{A}\to \mathsf{A}$ is a linear antimultiplicative norm preserving involution
 that commutes with hermitian conjugation and with
the semigroup $(\tau_t)_{t\in \R}$, i.e.\ $\tau_t \circ R = R \circ \tau_t$ for
all $t \in \R$, see \cite[Theorem 1.6]{woronowicz98}. The operator $R$ is called
the \emph{unitary antipode} and is related to Woronowicz characters through the
formula
\begin{equation}\label{eq-R}
R(a) = S(f_{\frac{1}{2}}\star a\star f_{-\frac{1}{2}}) \quad
\mbox{for} \quad a\in\mathcal{A}.
\end{equation}

\subsection{L\'evy processes on involutive bialgebras}
\label{processes}

 We recall the definition of L\'evy processes on $*$-bialgebras, cf.\
\cite{schuermann93}. An introduction to this topic can also be found in
\cite{franz06}. Lindsay and Skalski have developped an analytic theory of L\'evy
processes on C${}^*$-bialgebras, see \cite{lindsay+skalski12} and the references
therein.

\begin{definition}\label{def levy}
A family of unital $*$-homomorphisms $(j_{st})_{0\le s\le t}$ defined on a
$*$-bialgebra $\A$ with values in a unital $*$-algebra ${\mathcal B}$
with some fixed state $\Phi:{\mathcal B}\to \C$ is called a \emph{L\'evy process on $\A$}
(w.r.t.\ $\Phi$), if the following conditions are satisfied:
\begin{itemize}
\item[$(i)$]
the images corresponding to disjoint time intervals commute, i.e. \linebreak
$[j_{st}(\A),j_{s't'}(\A)]=\{0\}$ for $0\le s\le t\le s'\le
t'$, and expectations corresponding to disjoint time intervals
factorize, i.e.\
\[
\Phi(j_{s_1t_1}(a_1) \cdots j_{s_nt_n}(a_n)) =
\Phi(j_{s_1t_1}(a_1))\cdots \Phi(j_{s_nt_n}(a_n)),
\]
for all $n\in\mathbb{N}$, $a_1,\ldots,a_n\in\A$ and $0\le s_1\le
t_1\le\ldots\le t_n$;
\item[$(ii)$]
$m_{\mathcal B}\circ(j_{st}\otimes j_{tu})\circ \Delta = j_{su}$ for all $0\le s\le t\le
u$, where $m_{\mathcal B}$ denotes the multiplication of ${\mathcal B}$ and $\Delta$ is the comultiplication on $\A$;
\item[$(iii)$]
the functionals $\varphi_{st}=\Phi\circ j_{st}:\A\to \C$ depend
only on $t-s$;
\item[$(iv)$] $\displaystyle\lim_{t\searrow s} j_{st}(a) = j_{ss}(a) = \varepsilon(a)
1_{\mathcal B}$ for all $a\in {\A}$, where $1_{\mathcal B}$ denotes the unit of ${\mathcal B}$.
\end{itemize}
\end{definition}

We do not distinguish two L\'evy processes on the same $*$-bialgebra $\A$ which
are \emph{equivalent}. By this we mean that two processes $(j_{st})_{0\le s\le
t}$ and $(k_{st})_{0\le s\le t}$ with values in unital $*$-algebras $({\mathcal
B}, \Phi)$ and $({\mathcal B}', \Phi')$, respectively,
agree on all their finite joint moments, i.e.
$$ \Phi \big(j_{s_1t_1} (b_1 ) \cdots j_{s_nt_n} (b_n )\big) = \Phi'
\big(k_{s_1t_1} (b_1 ) \cdots k_{s_nt_n} (b_n )\big),$$
for all $n \in \mathbb{N}$, $s_1\le t_1 ,\ldots , s_n\le t_n$ and $b_1,
\ldots, b_n \in \mathcal{A}$.

If $(j_{st})_{0\le s\le t}$ is a L\'evy process, then the functionals
$\varphi_{t} :=\varphi_{0,t} = \varphi_{s,t+s}$ ($t\geq 0$) form a convolution
semigroup of states, i.e.
\begin{itemize}
\item $\varphi_0=\e$, $\varphi_s \star \varphi_t = \varphi_{s+t}$, $\lim_{t\to 0} \varphi_t(a) = \e(a)$ for all $a\in \A$,
\item $\varphi_t(\1)=1$, $\varphi_t(a^*a)\geq 0$ for all $a\in \A$ and $t\geq 0$.
\end{itemize}
 For such a semigroup there exists a linear functional $\phi$ which is hermitian
(i.e. $\phi (a^*)=\overline{\phi (a)}$ for $a\in \A$), conditionally positive
($\phi(a^*a) \geq 0$ when $a\in \ker \e$), vanishes on $\1$, and is such that
\begin{equation} \label{semigroup}
\varphi_t=\exp_\star t\phi = \varepsilon +t\phi + \frac{t^2}{2} \phi\star\phi
+ \cdots + \frac{t^n}{n!} \phi^{\star n} + \cdots.
\end{equation}
Conversely, by the Schoenberg correspondence (cf. \cite{franz06}), for every hermitian conditionally
positive linear functional $\phi:\A\to \C$ with $\phi(\1)=0$ there
exists a unique convolution semigroup of states $(\varphi_t)_{t \geq 0}$ which
satisfies \eqref{semigroup} and a unique (up to
equivalence) L\'evy process $(j_{st})_{0\le s\le t}$. The functional $\phi$
will be called the \emph{generating functional} of the L\'evy process
$(j_{st})_{0\le s\le t}$.

Given the convolution semigroup of states $(\varphi_t)_{t\geq 0}$,
we can also define the semigroup of operators on $\A$ (called the
\emph{Markov semigroup on $\A$} associated to $(j_{st})_{0\le s\le t}$)
$$T_t = (\id \otimes \varphi_t)\circ \Delta, \quad t\geq 0.$$

The infinitesimal generator of this semigroup is an operator $L:\A\to \A$, which
is related to $\phi$ by the relations
$$ L(a)=(\id \otimes \phi)\circ \Delta (a)=\phi \star a \quad \mbox{and} \quad \phi(a)=\e\circ L(a).$$
In this case we write $L=L_\phi$. As usual the formula to recover the semigroup from the generator is $T_t = \exp(tL)$ for $t\geq 0$. The fundamental theorem
of coalgebra ensures that all this makes sense in the bialgebra $\mathcal{A}$.

\medskip
Let $\mathcal{L}(\mathcal{A})$ denotes the algebra of linear operators from $\A$
to $\A$. Operators $L\in \mathcal{L}(\mathcal{A})$ of
the form $L=L_\phi=({\rm id}\otimes \phi)\circ \Delta$ for some linear
functional $\phi\in\mathcal{A}'$ will play an important role in the paper and we will refer to them as \emph{convolution operators}.

An operator $L\in \mathcal{L}(\mathcal{A})$ is a convolution operator
if and only if it is \emph{translation invariant} on $\A$, i.e.
\[
\Delta \circ L = ({\rm id}\otimes L)\circ\Delta,
\]
and, if this is the case, the linear functional $\phi$ can be recovered from $L$ using the formula
\[
\phi=\varepsilon\circ L.
\]

The map $\mathcal{A}'\ni \phi \to L_\phi \in \mathcal{L}(\mathcal{A})$ is
also
called \emph{the dual
  right representation}. It is a unital algebra homomorphism for the
convolution product, i.e.\ we have
\begin{eqnarray*}
L_\varepsilon &=& {\rm id}, \\
L_\phi\circ L_\psi &=& L_{\phi\star\psi},
\end{eqnarray*} for $\phi,\psi\in\mathcal{A}'$. Moreover, $L_\phi$ is hermitian, i.e.
\[
L_\phi (a^*)=(L_\phi a)^* \quad \mbox{for} \quad a\in \A
\]
iff $\phi$ is hermitian, i.e. $\phi(a^*)=\overline{\phi(a)}$, $a\in \A$.

\section{Translation invariant Markov semigroups}

Our goal is to construct Markov semigroups on compact quantum groups that
reflect the structure of the quantum group. In this section we show that it is
exactly the \emph{translation invariant} Markovian semigroups that can be
obtained from L\'evy processes on the algebra of smooth functions $\A={\rm
  Pol}(\mathbb{G})$ of the quantum group $\mathbb{G}=(\mathsf{A},\Delta)$.

For this purpose we first prove that the Markov  semigroup $(T_t)_{t\ge 0}$
of a L\'evy process on $\A$ has a unique extension to a strongly continuous
Markov semigroup on both its reduced and its universal C${}^*$-algebra. We then
show that the characterisation of L\'evy processes in topological groups as the
Markov processes which are invariant under time and space translations extends
to compact quantum groups.

\begin{definition} \label{def_markov}
 A strongly continuous semigroup of operators $(T_t)_{t\ge 0}$ on
a C$^*$-algebra $\mathsf{A}$ is called a \emph{quantum Markov semigroup on
$\mathsf{A}$} if every $T_t$ is a unital, completely positive contraction.
\end{definition}

If $(j_{st})_{0\le s \le t}$ is a L\'evy process on a $*$-bialgebra $\A$ with
the convolution semigroup of states $(\varphi_t)_{t\ge 0}$ on $\A$ and the
Markov semigroup $(T_t)_{t\ge 0}$ on $\A$, then, by a result of B\'edos, Murphy
and Tuset \cite[Theorem 3.3]{bedos+murphy+tuset01}, each $\varphi_t$
extends to a continuous functional on $\mathsf{A}_u$, the universal
C${}^*$-algebra generated by $\A$. Then the formula $T_t = (\id \otimes
\varphi_t) \circ \Delta$ makes sense on $\mathsf{A}_u$ (where
$\Delta:\mathsf{A}_u\to\mathsf{A}_u\otimes\mathsf{A}_u$ denotes the unique
unital $*$-homomorphism that extends
$\Delta:\mathcal{A}\to\mathcal{A}\otimes\mathcal{A}$) and one easily shows (in
the same way as in Proposition below) that $(T_t)_t$ becomes a Markov
semigroup
on $\mathsf{A}_u$ (in the sense of Definition \ref{def_markov}).

For us, however, it will be more natural to consider the reduced C${}^*$-algebra
generated by $\A$. This is the C${}^*$-algebra $\mathsf{A}_r$ obtained by
taking the norm closure of the GNS representation of $\mathcal{A}$ with
respect to the Haar state $h$.
The Haar state $h$ is by construction faithful on
$\mathsf{A}_r$. The coproduct on $\mathcal{A}$ extends to a unique unital
$*$-homomorphism $\Delta:\mathsf{A}_r\to\mathsf{A}_r\otimes\mathsf{A}_r$ which
makes the pair $(\mathsf{A}_r,\Delta)$ a compact quantum group. The following
result shows that, even though $\varphi_t:\mathcal{A}\to\mathbb{C}$ can be
unbounded with respect to the reduced C${}^*$-norm and therefore may not extend to
$\mathsf{A}_r$,  $(T_t)_{t\ge 0}$ always extends to a Markov
semigroup on $\mathsf{A}_r$.

Michael Brannan showed that states on any
$C^*$-algebraic version $C(\mathbb{G})$ of $\mathbb{G}$ define a continuous
convolution operator on the reduced version $C_r(\mathbb{G})$, cf.\
\cite[Lemma 3.4]{brannan2011}. We will need a similar result for convolution
semigroups of states on ${\rm Pol}(\mathbb{G})$.

\begin{theorem}
\label{extension-to-reduced}
Each L\'evy process $(j_{st})_{0\le s\le t}$ on the Hopf $*$-algebra $\A$ gives
rise to a unique strongly continuous Markov semigroup $(T_t)_{t\ge 0}$ on
$\mathsf{A}_r$, the reduced C${}^*$-algebra generated by $\A$.
\end{theorem}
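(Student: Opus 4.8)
The plan is to show that the algebraic semigroup $T_t=(\id\otimes\varphi_t)\circ\Delta$ on $\A$ extends, for each $t\ge 0$, to a unital completely positive contraction on $\mathsf{A}_r$, that the resulting family is a strongly continuous semigroup, and that this extension is unique. Uniqueness will be immediate once boundedness is established, since $\A$ is dense in $\mathsf{A}_r$.

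First I would record that $T_t$ preserves the Haar state. Writing $\Delta(a)=a_{(1)}\otimes a_{(2)}$ and using left-invariance $(h\otimes\id)\circ\Delta(a)=h(a)\1$ from Proposition \ref{prop-haar}, one gets
\[
h\big(T_t(a)\big)=(h\otimes\varphi_t)\Delta(a)=\varphi_t\big((h\otimes\id)\Delta(a)\big)=h(a)\,\varphi_t(\1)=h(a),
\]
so $h\circ T_t=h$ on $\A$. Since each $T_t$ is a unital completely positive map on $\A$ (being the convolution operator $L_{\varphi_t}$ of a state, so a composition of the $*$-homomorphism $\Delta$ with the positive slice $\id\otimes\varphi_t$), the Kadison--Schwarz inequality $T_t(a)^*T_t(a)\le T_t(a^*a)$ combined with $h$-invariance yields
\[
\|T_t(a)\,\xi_h\|^2=h\big(T_t(a)^*T_t(a)\big)\le h\big(T_t(a^*a)\big)=h(a^*a)=\|a\,\xi_h\|^2,
\]
where $\xi_h$ denotes the cyclic vector of the GNS representation defining $\mathsf{A}_r$. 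Thus $T_t$ is a contraction on the GNS Hilbert space $L^2(\mathbb{G},h)$.

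The heart of the argument is to upgrade this to boundedness in the reduced C${}^*$-norm, and I expect this to be the main obstacle: $\varphi_t$ itself need not extend continuously to $\mathsf{A}_r$, so the slice map $\id\otimes\varphi_t$ is not a priori defined on $\mathsf{A}_r\otimes\mathsf{A}_r$, and one must exploit the quantum-group structure rather than mere positivity. By \cite[Theorem 3.3]{bedos+murphy+tuset01} each $\varphi_t$ extends to a state on the universal C${}^*$-algebra $\mathsf{A}_u$. I would then apply Brannan's Lemma \cite[Lemma 3.4]{brannan2011} to this state on $\mathsf{A}_u$ to conclude that the convolution operator $T_t=(\id\otimes\varphi_t)\circ\Delta$ extends to a bounded operator $\overline{T_t}$ on $\mathsf{A}_r$. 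Since unitality and complete positivity hold on the dense $*$-subalgebra $\A$ and are preserved under a bounded extension, $\overline{T_t}$ is a unital completely positive contraction on $\mathsf{A}_r$.

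It remains to verify the semigroup axioms and strong continuity. On $\A$ the dual right representation is a homomorphism, so $T_sT_t=L_{\varphi_s}L_{\varphi_t}=L_{\varphi_s\star\varphi_t}=L_{\varphi_{s+t}}=T_{s+t}$ and $T_0=L_\e=\id$; these identities pass to $\mathsf{A}_r$ by continuity and density. For strong continuity it suffices, given the uniform bound $\|\overline{T_t}\|\le 1$, to check that $\|T_t(a)-a\|_{\mathsf{A}_r}\to 0$ as $t\searrow 0$ for $a$ in the dense subalgebra $\A$. Here I would use the Peter--Weyl decomposition: each such $a$ lies in a finite sum of the finite-dimensional blocks $V_s$, on which $T_t$ acts by right multiplication by the matrix $\big(\varphi_t(u^{(s)}_{pk})\big)_{p,k}$, because $T_t\big(u^{(s)}_{jk}\big)=\sum_p u^{(s)}_{jp}\,\varphi_t\big(u^{(s)}_{pk}\big)$. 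Since $\lim_{t\searrow 0}\varphi_t(x)=\e(x)$ for all $x\in\A$ and $\e(u^{(s)}_{pk})=\delta_{pk}$, these matrices converge to the identity, so $T_t(a)\to a$ in the finite-dimensional subspace $\bigoplus_s V_s$, hence in $\mathsf{A}_r$. Finally, any strongly continuous extension must agree with $T_t$ on the dense subalgebra $\A$, so the extension is unique.
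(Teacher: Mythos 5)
Your proposal is correct, and the overall architecture (extend each $T_t$ to a u.c.p.\ contraction on $\mathsf{A}_r$, then check the semigroup law and strong continuity on the dense subalgebra $\A$, with uniqueness following from density) matches the paper's. The genuine difference is in the key step, the boundedness of $T_t$ in the reduced norm. You delegate it to two external results: the B\'edos--Murphy--Tuset extension of the state $\varphi_t$ from $\A$ to $\mathsf{A}_u$ \cite[Theorem 3.3]{bedos+murphy+tuset01}, followed by Brannan's lemma \cite[Lemma 3.4]{brannan2011} that a state on a C${}^*$-algebraic version of $\mathbb{G}$ induces a bounded convolution operator on $\mathsf{A}_r$. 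The paper instead proves this from scratch: it introduces the isometry $U(\lambda(a)\xi\otimes\rho_t(b)\xi_t)=\lambda(a_{(1)})\xi\otimes\rho_t(a_{(2)}b)\xi_t$ on $\mathcal{H}\otimes\mathcal{H}_t$ (where $(\rho_t,\mathcal{H}_t,\xi_t)$ is the GNS triple of $\varphi_t$), shows it is unitary via the antipode axiom, and establishes the explicit representation $\lambda(T_t(a))=E_t\bigl(U(\lambda(a)\otimes\id_{\mathcal{H}_t})U^*\bigr)$ of Equation \eqref{markov_bounded}, from which contractivity and complete positivity are immediate. This is essentially Brannan's own argument reproduced in a self-contained way; the paper signals this deliberately ("We will need a similar result for convolution semigroups of states on ${\rm Pol}(\mathbb{G})$"), and the explicit formula \eqref{markov_bounded} is of independent use. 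Your route is shorter at the price of two citations whose hypotheses you must match up (which you do: the extension to $\mathsf{A}_u$ is exactly the bridge needed before Brannan's lemma applies, and the extended operator agrees with $(\id\otimes\varphi_t)\circ\Delta$ on $\A$). Two minor remarks: your opening observation that $h\circ T_t=h$ and that $T_t$ is an $L^2$-contraction is correct but plays no role afterwards, as you yourself note that $L^2$-contractivity does not give the C${}^*$-norm bound; and your strong-continuity argument via the Peter--Weyl blocks $V_s$ is the same finite-sum argument the paper writes in Sweedler notation.
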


\begin{proof}
 Let $(\lambda, \mathcal{H}, \xi)$ be the GNS representation of $\A$ for the
Haar state $h$, thus $h(a)=\langle \xi, \lambda(a) \xi\rangle $ for $a\in
\A$. 
We denote by $\|.\|_r$ the norm in $\mathsf{A}_r$, that is $\|a\|_r = \|\lambda(a)\|$, where $\|.\|$ denotes the operator norm.

Similarly, let $(\rho_t, \mathcal{H}_t, \xi_t)$ be the GNS representation of
$\A$ for the state $\varphi_t = \Phi \circ j_{0t}$, so that
$\varphi_t (a)= \langle \xi_t, \rho_t(a) \xi_t \rangle $ for $a\in \A$.

We define the operators
\begin{eqnarray*}
 i_t &: & \mathcal{H} \ni v \to v \otimes \xi_t \in  \mathcal{H}\otimes \mathcal{H}_t\\
 \pi_t &: & \mathcal{H}\otimes \mathcal{H}_t \ni v\otimes w \to \langle \xi_t,w \rangle_{\mathcal{H}_t} \, v \in  \mathcal{H}\\
 E_t &: &  B(\mathcal{H}\otimes \mathcal{H}_t) \ni X \to \pi_{t} \circ X \circ i_{t} \in B(\mathcal{H}).
\end{eqnarray*}
Since for each $t$, $i_t$ is an isometry
and $\pi_t$ is contractive,
$E_t$ is contractive too: $\|E_t (X)\| = \|\pi_{t} \circ X \circ i_{t}\| \leq \|X\|$.

Next we define
 $$ U: \lambda(\mathcal{A})\xi\otimes\rho_t(\mathcal{A})\xi_t \ni \lambda(a) \xi
\otimes \rho_t (b) \xi_t \mapsto \lambda (a_{(1)})\xi \otimes \rho_t (a_{(2)}b)
\xi_t \in \mathcal{H}\otimes \mathcal{H}_t $$
and we check that it is an isometry with adjoint given by
 $$U^*(\lambda(a) \xi \otimes \rho_t (b) \xi_t) = \lambda(a_{(1)}) \xi \otimes
\rho_t (S(a_{(2)})b) \xi_t.$$
Indeed, using the invariance of the Haar measure, we show that $U$ is isometric
\begin{eqnarray*}
 \lefteqn{\big\langle U(\lambda(a) \xi \otimes \rho_t (b) \xi_t) , U(\lambda(c)
\xi \otimes \rho_t (d) \xi_t) \big\rangle }\\
&=& \big\langle \lambda (a_{(1)})\xi \otimes \rho_t (a_{(2)}b) \xi_t,
\lambda (c_{(1)})\xi \otimes \rho_t (c_{(2)}d) \xi_t \big\rangle
\\ &=& h( a_{(1)}^* c_{(1)}) \varphi_t (b^*a_{(2)}^*c_{(2)}d)
= (h\otimes \varphi_t^{b,d}) ( a_{(1)}^* c_{(1)} \otimes a_{(2)}^*c_{(2)})
= (h\star \varphi_t^{b,d}) (a^*c)
\\ &=& h(a^*c) \varphi_t^{b,d}(\1) = h(a^*c) \varphi_t(b^*d)
= \big\langle \lambda(a) \xi \otimes \rho_t (b) \xi_t , \lambda(c) \xi \otimes
\rho_t (d) \xi_t \big\rangle,
\end{eqnarray*}
where $\varphi_t^{b,d} (x) := \varphi_t (b^*xd)$. Moreover, by the antipode
property \eqref{antipode_property} we have
\begin{eqnarray*}
 \lefteqn{UU^*(\lambda(a) \xi \otimes \rho_t (b) \xi_t) = U(\lambda(a_{(1)}) \xi \otimes \rho_t (S(a_{(2)})b) \xi_t) }\\
 &=& \lambda(a_{(1)}) \xi \otimes \rho_t (a_{(2)}S(a_{(3)})b) \xi_t
 = \lambda(a_{(1)}\e(a_{(2)})) \xi \otimes \rho_t (b) \xi_t =\lambda(a) \xi \otimes \rho_t (b) \xi_t,
\end{eqnarray*}
which implies that $U$ is an isometry with dense image and therefore extends to a unique unitary operator denoted again by $U$.

Now the fact that the Markov semigroup $(T_t)_t$ is bounded on $\mathsf{A}_r$, i.e.
$$\|T_t(a)\|_r = \|\lambda(T_t(a))\|_{B(\mathcal{H})} \leq \|\lambda(a)\|_{B(\mathcal{H})} =\|a\|_r,$$
follows immediately from the relation
\begin{equation} \label{markov_bounded}
 \lambda(T_t(a)) = E_t \big( U (\lambda(a) \otimes \id_{\mathcal{H}_t}) U^*
\big),
\end{equation}
since
\begin{eqnarray*}
\|\lambda(T_t(a))\|&=& \| E_t (U (\lambda(a) \otimes \id_{\mathcal{H}_t}) U^*)\| \leq
\|U (\lambda(a) \otimes \id_{\mathcal{H}_t}) U^*\| \\
&=& \|\lambda(a) \otimes \id_{\mathcal{H}_t}\|=\|\lambda(a)\|.
\end{eqnarray*}
To see that \eqref{markov_bounded} holds, let us fix $v\in \mathcal{H}$ and $b\in \A$ such that $v=\lambda(b)\xi$. Then
\begin{eqnarray*}
E_t (U (\lambda(a) \otimes \id_{\mathcal{H}_t}) U^*) v
&=& (\pi_{t} \circ U \circ (\lambda(a) \otimes \id_{\mathcal{H}_t})\circ  U^* \circ i_{t})(\lambda(b)\xi) \\
&=& (\pi_{t} \circ U \circ (\lambda(a) \otimes \id_{\mathcal{H}_t}) \circ  U^*)\, (\lambda(b)\xi \otimes \xi_t)  \\
&=& \pi_{t} \circ U \circ (\lambda(a) \otimes \id_{\mathcal{H}_t})\, \left(\lambda(b_{(1)})\xi \otimes \rho_t(S(b_{(2)}))\xi_t \right)\\
&=& \pi_{t} \circ U \, \left( \lambda(ab_{(1)})\xi \otimes \rho_t(S(b_{(2)}))\xi_t\right)  \\
&=& \pi_{t} \, \left( \lambda(a_{(1)}b_{(1)})\xi \otimes \rho_t(a_{(2)}b_{(2)}S(b_{(3)}))\xi_t \right) \\
&=& \pi_{t} \, \left( \lambda(a_{(1)}b)\xi \otimes \rho_t(a_{(2)})\xi_t \right) \\
&=& \langle \xi_t, \rho_t(a_{(2)})\xi_t \rangle \; \lambda(a_{(1)}b)\xi  \\
&=& \lambda(a_{(1)}\varphi_t(a_{(2)})) \lambda( b)\xi = \lambda(T_t(a))v.
\end{eqnarray*}

 This way we showed that each $T_t$ extends to a contraction on $\mathsf{A}_r$.
The extensions again form a semigroup and since both $\Delta$ and $\varphi_t$
are completely positive and unital, $T_t$ is too. Let us now check that
$(T_t)_t$ forms a strongly continuous semigroup on $\mathsf{A}_r$.

For a given $a\in \mathsf{A}_r$ we choose, by density, an element $b \in \A$ such
$\|a-b\|_r<\epsilon$. By definition for $b\in \A$, $T_t(b)= \varphi_t \star b =
(\id \otimes \varphi_t) \circ \Delta (b)$, where $(\varphi_t)_t$ is the
convolution semigroup of states on $\A$ (cf. Section \ref{processes}). Thus
\begin{eqnarray*}
 \|T_t(a)-a\|_r &\leq& \|T_t(a)-T_t(b)\|_r+\|T_t(b)-b\|_r+\|b-a\|_r\\
&\leq& 2\|a-b\|_r+\|(\varphi_t \star b)-b\|_r \leq 2\epsilon+\sum \|b_{(1)}\varphi_t(b_{(2)})-b_{(1)} \varepsilon(b_{(2)})\|_r \\
&=& 2\epsilon+ \sum |\varphi_t(b_{(2)})- \varepsilon(b_{(2)})| \|b_{(1)}\|_r .
\end{eqnarray*}
Since $\lim_{t\to 0+} \varphi_t(b)=\e(b)$ for any $b\in \A$ and the sum is finite, we conclude that
$$ \lim_{t\to 0+} \|T_t(a)-a\|_r=0 \quad \mbox{for each} \; a\in \mathsf{A}_r.$$
\end{proof}

The next results give a characterisation of Markov semigroups which are related
to L\'evy processes on compact quantum groups.
\begin{lemma} \label{lem_trans_invariant}
Let $(\mathsf{A},\Delta)$ be a compact quantum group and let
$T:\mathsf{A}\to\mathsf{A}$ be a completely bounded linear map.

If $T$ is translation invariant, i.e.\ satisfies
\[
\Delta\circ T= ({\rm id}\otimes T)\circ\Delta
\]
then $T(V_s)\subseteq V_s$ for all $s\in\mathcal{I}$ and therefore $T$ also
leaves the $*$-Hopf algebra $\mathcal{A}$ invariant.
\end{lemma}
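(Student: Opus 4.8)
The plan is to exploit the fact that translation invariance of $T$ forces $T$ to be compatible with the corepresentation structure, and that the subspaces $V_s$ are precisely the ``isotypic components'' picked out by the coproduct. Concretely, fix an irreducible unitary corepresentation $u^{(s)}$ with matrix entries $u^{(s)}_{jk}$, so that $\Delta(u^{(s)}_{jk})=\sum_{p} u^{(s)}_{jp}\otimes u^{(s)}_{pk}$. I would apply the intertwining relation $\Delta\circ T=(\mathrm{id}\otimes T)\circ\Delta$ to the element $u^{(s)}_{jk}$ and read off what it says about $\Delta\big(T(u^{(s)}_{jk})\big)$.

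First I would compute, for a fixed basis element,
\[
\Delta\big(T(u^{(s)}_{jk})\big)=(\mathrm{id}\otimes T)\Delta(u^{(s)}_{jk})=\sum_{p=1}^{n_s} u^{(s)}_{jp}\otimes T(u^{(s)}_{pk}).
\]
This exhibits $\Delta\big(T(u^{(s)}_{jk})\big)$ as lying in $V_s\otimes\mathsf{A}$, and more precisely shows that the ``left leg'' of the coproduct of each $T(u^{(s)}_{jk})$ is spanned by the $u^{(s)}_{jp}$. The key structural input I would then invoke is that the family $\{u^{(t)}_{k\ell}\}$ is a linear basis of $\A$ and that the coproduct acts on this basis ``diagonally in $t$'', i.e. $\Delta(u^{(t)}_{k\ell})=\sum_p u^{(t)}_{kp}\otimes u^{(t)}_{p\ell}$, so that distinct corepresentation blocks are separated by the coproduct. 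Applying the counit to the first leg via $(\e\otimes\mathrm{id})\circ\Delta=\mathrm{id}$ (see \eqref{counit_property}) recovers $T(u^{(s)}_{jk})=\sum_p \e(u^{(s)}_{jp})\,T(u^{(s)}_{pk})=T(u^{(s)}_{jk})$, which is a tautology, so the real work is in the converse direction.

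The cleaner route, which I would favor, is to write $T(u^{(s)}_{jk})=\sum_{t,m,n} c^{t,mn}_{s,jk}\,u^{(t)}_{mn}$ in the Peter--Weyl basis (this uses that $T$ maps $\mathsf{A}$ into $\mathsf{A}$ and that $\A$ is dense; I would first argue, using strong continuity or boundedness, that $T$ need not a priori preserve $\A$, so I must be careful here). Substituting this expansion into both sides of the displayed identity for $\Delta\big(T(u^{(s)}_{jk})\big)$ and comparing coefficients against the linearly independent tensors $u^{(t)}_{mp}\otimes u^{(t)}_{pn}$ forces all coefficients $c^{t,mn}_{s,jk}$ with $t\neq s$ to vanish, and pins down the surviving ones to have the form dictated by an intertwiner in $M_{n_s}(\C)$; irreducibility (Schur's lemma, as encoded in the definition of irreducible corepresentation) then constrains them further. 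This shows $T(u^{(s)}_{jk})\in V_s$, hence $T(V_s)\subseteq V_s$, and since $\A=\bigoplus_s V_s$ it follows that $T(\A)\subseteq\A$.

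The main obstacle I anticipate is the a priori \textbf{regularity issue}: $T$ is only assumed completely bounded on the C${}^*$-algebra $\mathsf{A}$, so $T(u^{(s)}_{jk})$ is an element of $\mathsf{A}$ whose Peter--Weyl expansion is, at first, merely an $L^2(h)$-expansion rather than a finite algebraic sum. I would handle this by working in the GNS space of the Haar state $h$, where $\{u^{(t)}_{k\ell}\}$ is an orthogonal basis by the orthogonality relations \eqref{eq-pw}; the intertwining relation, applied and then paired against the basis, shows the expansion of $T(u^{(s)}_{jk})$ has coefficients supported only on the block $t=s$, which is \emph{finite}-dimensional ($V_s$ has dimension $n_s^2$). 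Thus the a priori infinite expansion collapses to a finite one lying in $V_s\subseteq\A$, and the boundedness of $T$ guarantees the manipulations make sense. Verifying that the coproduct genuinely separates the blocks cleanly enough to extract this support condition is the technical heart, but it follows directly from the linear independence of the tensor products $u^{(t)}_{mp}\otimes u^{(t')}_{p'n}$ for distinct $(t,t')$.
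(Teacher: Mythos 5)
Your proposal is correct and follows essentially the same route as the paper: both isolate the isotypic component of $T\big(u^{(s)}_{jk}\big)$ by pairing against the Peter--Weyl basis under the Haar state and using the orthogonality relations, so that the a priori infinite $L^2$-expansion collapses onto the finite-dimensional block $V_s$. The paper implements your ``pair the intertwining relation against the basis'' step by writing $h\big((u^{(s')}_{pq})^*T(u^{(s)}_{jk})\big)=(h\star h)\big((u^{(s')}_{pq})^*T(u^{(s)}_{jk})\big)$ (idempotence of the Haar state), expanding with the coproduct and translation invariance, and applying \eqref{eq-pw} to the first leg to produce the factor $\delta_{ss'}$ --- which yields exactly the vanishing of the off-block coefficients you want, without ever having to apply $\Delta$ term by term to an $L^2$-convergent series.
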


\begin{proof}
Let $s,s'\in \mathcal{I}$, $s\not=s'$, and $1\le j,k\le n_s$, $1\le p,q\le
n_{s'}$. Since the Haar state is idempotent, we have
\begin{eqnarray*}
h\left(\left(u^{(s')}_{pq}\right)^*T\left(u^{(s)}_{jk}\right)\right) &=&
(h\star h)\left(\left(u^{(s')}_{pq}\right)^*T\left(u^{(s)}_{jk}\right)\right) \\
&=& \sum_{r=1}^{n_{s'}} (h\otimes h) \left( \left(\left(u^{(s')}_{pr}\right)^*\otimes
  \left(u^{(s')}_{rq}\right)^*\right)\Delta\left(T\left(u^{(s)}_{jk}\right)\right)\right)
\\
&=&
\sum_{r=1}^{n_{s'}} \sum_{\ell=1}^{n_{s}} (h\otimes h) \left( \left(u^{(s')}_{pr}\right)^*\otimes
  \left(u^{(s')}_{rq}\right)^*\left(u^{(s)}_{j\ell}\otimes
    T\left(u^{(s)}_{\ell k}\right)\right)\right)
\\
&=& \sum_{\ell=1}^{n_s} \delta_{ss'} \frac{\overline{f_1((u^{(s)}_{jp})^*)}}{D_s}
  h\left((u^{(s')}_{\ell q})^*T(u^{(s)}_{\ell k})\right),
\end{eqnarray*}
i.e.\ $h\left(\big(u^{(s')}_{pq}\big)^*T\big(u^{(s)}_{jk}\big)\right)=0$
for all  $s,s'\in \mathcal{I}$, with $s\not=s'$, and all $1\le j,k\le n_s$,
$1\le p,q\le n_{s'}$. Therefore $T\big( u^{(s)}_{jk}\big)\in V_s$.
\end{proof}

\begin{theorem}\label{prop-trans-inv}
Let $(\mathsf{A},\Delta)$ be a compact quantum group and
$(T_t)_{t\ge 0}$ a quantum Markov semigroup on $\mathsf{A}$.

Then $(T_t)_{t\ge 0}$ is the quantum Markov semigroup of a (uniquely
determined) L\'evy process on  $\A$ if and only if $T_t$ is
translation invariant for all $t\ge 0$.
\end{theorem}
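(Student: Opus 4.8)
Let me sketch my proof plan.

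The statement is an "if and only if," so I'll prove both directions.

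For the forward direction (Lévy process $\Rightarrow$ translation invariant), this is the easy half and essentially follows from the defining formula. If $(T_t)$ comes from a Lévy process, then on $\mathcal{A}$ we have $T_t = (\mathrm{id} \otimes \varphi_t) \circ \Delta$. Translation invariance $\Delta \circ T_t = (\mathrm{id} \otimes T_t) \circ \Delta$ is then a direct computation using coassociativity: indeed $\Delta \circ (\mathrm{id} \otimes \varphi_t) \circ \Delta = (\mathrm{id} \otimes \mathrm{id} \otimes \varphi_t) \circ (\Delta \otimes \mathrm{id}) \circ \Delta = (\mathrm{id} \otimes \mathrm{id} \otimes \varphi_t) \circ (\mathrm{id} \otimes \Delta) \circ \Delta = (\mathrm{id} \otimes T_t) \circ \Delta$. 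This identity holds on the dense subalgebra $\mathcal{A}$ and extends to all of $\mathsf{A}$ by continuity of $\Delta$ and $T_t$.

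For the converse (translation invariant $\Rightarrow$ arises from a Lévy process), which is the substantive direction, the plan is to reconstruct the generating functional and invoke the Schoenberg correspondence. First, by Lemma \ref{lem_trans_invariant}, each $T_t$ leaves every $V_s$ invariant and hence restricts to a semigroup on the Hopf $*$-algebra $\mathcal{A}$. Because $T_t$ is translation invariant, it is a convolution operator, so by the discussion in Section \ref{processes} we have $T_t = L_{\varphi_t}$ where $\varphi_t := \varepsilon \circ T_t \in \mathcal{A}'$. The semigroup law $T_s \circ T_t = T_{s+t}$ translates, via the algebra-homomorphism property $L_\phi \circ L_\psi = L_{\phi\star\psi}$, into the convolution semigroup law $\varphi_s \star \varphi_t = \varphi_{s+t}$; unitality and complete positivity of $T_t$ force each $\varphi_t$ to be a state, and strong continuity gives $\lim_{t\to 0}\varphi_t(a) = \varphi_0(a) = \varepsilon(a)$. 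The next step is to define the generating functional $\phi := \frac{d}{dt}\big|_{t=0}\varphi_t = \varepsilon \circ L$, where $L$ is the generator; since each $T_t$ preserves the finite-dimensional spaces $V_s$, the restriction of the semigroup to each $V_s$ is a norm-continuous matrix semigroup, so $\phi$ is well defined on all of $\mathcal{A}$ and $\varphi_t = \exp_\star t\phi$ on each $V_s$. One then checks that $\phi$ is hermitian, conditionally positive, and vanishes on $\1$, these properties being inherited from the states $\varphi_t$ in the $t\to 0$ limit.

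Once $\phi$ is shown to be a generating functional, the Schoenberg correspondence recalled in Section \ref{processes} produces a unique (up to equivalence) Lévy process whose associated convolution semigroup of states is exactly $(\varphi_t)_{t\ge 0}$ and whose Markov semigroup on $\mathcal{A}$ agrees with $(T_t)|_{\mathcal{A}}$; Theorem \ref{extension-to-reduced} then shows this Markov semigroup extends uniquely to $\mathsf{A}_r$, and by uniqueness of the extension it must coincide with the given $(T_t)$. The main obstacle I anticipate is the passage from the one-parameter family $(\varphi_t)$ to a well-defined generating functional $\phi$: a priori strong continuity of $(T_t)$ on the C$^*$-algebra does not directly give differentiability of $t \mapsto \varphi_t(a)$ at $t=0$. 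The resolution is precisely the finite-dimensionality supplied by Lemma \ref{lem_trans_invariant}—on each $V_s$ the semigroup is a continuous one-parameter semigroup of matrices, which is automatically differentiable—so the generator exists block by block and assembles into the operator $L$ on $\mathcal{A}$. Verifying conditional positivity of $\phi$ (rather than mere positivity) is the other delicate point, and I would obtain it from the Schoenberg-type computation $\phi(a^*a) = \lim_{t\to 0}\frac{1}{t}\varphi_t(a^*a) \ge 0$ for $a \in \ker\varepsilon$, using $\varphi_t(a^*a)\ge 0$ and $\varphi_0(a^*a) = |\varepsilon(a)|^2 = 0$.
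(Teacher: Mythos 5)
Your proposal is correct and follows essentially the same route as the paper: the forward direction is the same coassociativity computation plus density, and the converse uses Lemma \ref{lem_trans_invariant} to reduce to the finite-dimensional blocks $V_s$, sets $\varphi_t=\varepsilon\circ T_t|_{\mathcal{A}}$, and recovers the generating functional and the L\'evy process via the Schoenberg correspondence. The paper compresses the converse into ``it now follows easily''; your write-up simply supplies the details (differentiability on each $V_s$, conditional positivity from $\varphi_t(a^*a)\ge 0$ and $\varphi_0(a^*a)=0$ on $\ker\varepsilon$) that the authors leave implicit.
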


\begin{proof}
If $(T_t)_{t}$ comes from a L\'evy process on $\A$, then, on
$\mathcal{A}$,  $T_t = (\id \otimes \varphi_t) \circ \Delta$ and so
$$ \Delta \circ T_t = (\id \otimes \id \otimes \varphi_t) \circ (\Delta \otimes
\id) \circ \Delta = (\id \otimes \big((\id \otimes \varphi_t) \circ \Delta\big)
) \circ \Delta = (\id \otimes T_t) \circ \Delta.
$$
Hence $T_t$ is translation invariant on $\A$, and therefore also on $\mathsf{A}$
by continuity.

Conversely, if every $T_t$ is translation invariant, then Lemma
\ref{lem_trans_invariant} implies that, for all $a\in V_s$, $T_ta\in V_s$ and
so, since $V_s$ is finite dimensional, $\e(T_ta) \to \e(a)$ as $t\to 0$. It now
follows easily that $\varphi_t:=\e\circ T_t|_{\A}$ defines a convolution
semigroups of states whose generating functional defines a L\'evy process whose
Markov semigroup is $(T_t)_t$.

\end{proof}

The corresponding result, for counital multiplier C$^*$-bialgebras satisfying a
residual vanishing at infinity condition, was proved by Lindsay and Skalski
\cite[Proposition 3.2]{lindsay+skalski11}). Their result covers coamenable
compact quantum groups (where the counit extends continuously to the
C$^*$-algebra). The above proof, for all compact quantum groups, is simpler.

\section{GNS-Symmetry and KMS-symmetry of convolution operators}
\label{sec_gns_kms}

In this section we study symmetry properties of convolution operators
$L_\phi(a)=\phi \star a$ on $\mathcal{A}$ and we show that they can be
translated into invariance properties of the corresponding generating functional
$\phi$.

We will use two antilinear involutions $\#$ and $\star$ on $\mathcal{A}'$, defined by
\begin{eqnarray*}
\phi^\#(a) &=& \overline{\phi(a^*)}, \\
\phi^\star(a) &=& \phi^\#\big(S(a)\big),
\end{eqnarray*}
for $a\in\mathcal{A}$. A functional $\phi\in\mathcal{A}'$ is hermitian if and
only if $\phi^\#=\phi$. Furthermore, we have
$\varepsilon^\#=\varepsilon^\star=\varepsilon$ and $h^\#=h^\star=h$.  Note that
$\#$ is multiplicative whereas $\star$ is anti-multiplicative with respect to
the convolution of functionals:
$$(\phi\star\psi)^\#=\phi^\#\star\psi^\#, \quad
(\phi\star\psi)^\star=\psi^\star\star\phi^\star$$
 for $\phi,\psi\in\mathcal{A}'$.

\medskip
Let us denote by $L^2(\mathsf{A},h)$ the GNS Hilbert space of $(\mathsf{A},h)$, by
$\xi_h =1_\mathsf{A}\in L^2 (\mathsf{A},h)$ the cyclic vector representing the Haar state:
$h(a)= \langle \xi_h , a\xi_h \rangle$ and let us assume that we are given an
embedding, i.e.\
an injective linear map $i:\A \to L^2(\mathsf{A},h)$ with a dense range. We say
that a linear operator $L:\A \to \A$ \emph{admits an $i$-adjoint} if there
exists $L^\dag: \A \to \A$ such that
\begin{equation*}
 \big\langle i(a), i(Lb) \big\rangle = \big\langle i(L^\dag a), i(b) \big\rangle
\end{equation*}
 for any $a,b\in \A$. Since $h$ is faithful on $\A$ and since $i$ has a dense
range, the adjoint is unique if it exists. Then, an operator $L\in \mathcal{L}
(\mathcal{A})$ is called \emph{$i$-symmetric} if $L$ equals to its $i$-adjoint.

 In this paper we shall consider two embeddings. The first one is the natural
inclusion coming from the GNS construction
$$i_h: \A \ni a \to a\xi_h \in L^2(\mathsf{A},h).$$
\label{gns_embedding}

\begin{definition}
 A map $L^\star\in\mathcal{L}(\mathcal{A})$ such that
\begin{equation} \label{gns_symm_condition}
h \big (a^* L(b) \big) = h \big( L^\star(a)^* b \big)
\end{equation}
for all $a,b,\in\mathcal{A}$ will be called a \emph{GNS-adjoint}, or simply \emph{adjoint}
of $L$ w.r.t.\ $h$. A map $L$ will be called \emph{GNS-symmetric} if $L=L^\star$.
\end{definition}

Let us observe that a convolution operator always admits a GNS-adjoint.
\begin{proposition}\label{prop-self-adj}
Let $\phi\in\mathcal{A}'$. Then there exists a unique convolution operator $L_\phi^\star$ that
is adjoint to $L_\phi$ w.r.t.\ the Haar state, i.e.\ that satisfies
\[
h\big(a^* L_\phi(b)\big) = h \big(L_\phi^\star(a)^*b\big)
\]
for all $a,b\in\mathcal{A}$. The adjoint of $L_\phi$ is given by
$$L^\star_\phi=L_{\phi^\star}.$$
Therefore $L_\phi$ is GNS-symmetric if and only if $\phi^\star=\phi$.
\end{proposition}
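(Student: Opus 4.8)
The plan is to compute $h\big(a^* L_\phi(b)\big)$ directly using the definition $L_\phi(b)=\phi\star b=(\id\otimes\phi)\circ\Delta(b)$ and the invariance properties of the Haar state, then massage the expression until it has the form $h\big(L_\psi(a)^* b\big)$ for some explicit functional $\psi$, which I expect to turn out to be $\phi^\star$. Since the GNS-adjoint is unique when it exists (the Haar state is faithful on $\A$), identifying one convolution operator satisfying \eqref{gns_symm_condition} suffices, and uniqueness among \emph{all} operators is guaranteed, so I will also get that $L_\phi^\star=L_{\phi^\star}$ is itself a convolution operator.

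First I would write, in Sweedler notation, $L_\phi(b)=b_{(1)}\phi(b_{(2)})$, so that
\[
h\big(a^* L_\phi(b)\big)=\phi(b_{(2)})\,h\big(a^* b_{(1)}\big).
\]
The key maneuver is to move the antipode across using the defining property \eqref{antipode_property}, namely $m_\A\circ(S\otimes\id)\circ\Delta=\e(\cdot)\1$, together with the relation $S(a^*)^*=a$. Concretely, I would insert a factor of the form $S(b_{(3)})b_{(3)}'\cdots$ and use coassociativity of $\Delta$ to shuffle the legs so that the functional $\phi$ ends up evaluated on an expression involving $S$ and $*$. The right identity to exploit is that $h$ is an invariant integral, so that $h(x\,(\phi\star y))$ can be rewritten by transporting $\phi$ onto the other factor at the cost of applying $S$ (and conjugation). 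The outcome should be
\[
h\big(a^* L_\phi(b)\big)=\overline{\phi\big(S(a_{(2)})^*\big)}\;h\big(a_{(1)}^* b\big)=h\big(L_{\phi^\star}(a)^* b\big),
\]
where the last equality uses precisely the definitions $\phi^\star(x)=\phi^\#(S(x))=\overline{\phi(S(x)^*)}$ and $L_{\phi^\star}(a)=a_{(1)}\phi^\star(a_{(2)})$.

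The main obstacle will be the bookkeeping in the Sweedler calculation: getting the antipode to land on the correct leg and verifying that the conjugation in the definition of $\phi^\#$ matches up with the inner-product conjugation coming from the $*$ inside $h(a^* \cdot)$. To control this I would work on the generators $u^{(s)}_{jk}$, where the coproduct $\Delta(u^{(s)}_{jk})=\sum_p u^{(s)}_{jp}\otimes u^{(s)}_{pk}$ and the antipode $S(u^{(s)}_{jk})=(u^{(s)}_{kj})^*$ are fully explicit; the Peter--Weyl orthogonality relations \eqref{eq-pw} then make both sides computable in closed form, and matching them verifies the identity on a linear basis of $\A$, hence everywhere by linearity. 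Once \eqref{gns_symm_condition} is established for $L_{\phi^\star}$, the uniqueness of the adjoint (from faithfulness of $h$ and density of the range) gives $L_\phi^\star=L_{\phi^\star}$, and the final assertion follows: $L_\phi$ is GNS-symmetric iff $L_{\phi^\star}=L_\phi$, which by injectivity of $\phi\mapsto L_\phi$ (a consequence of $\phi=\e\circ L_\phi$) holds iff $\phi^\star=\phi$.
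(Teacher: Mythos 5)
Your proposal is correct, but it takes a different route from the paper: the paper disposes of this proposition in two lines by invoking the known fact that the dual right representation $\phi\mapsto L_\phi$ is a $*$-representation of $(\mathcal{A}',\star)$ for the involution $\star$ and the inner product $\langle a,b\rangle=h(a^*b)$, citing Van Daele's finite-dimensional argument and Franz--Skalski. Your approach is a self-contained verification: reduce to the basis $u^{(s)}_{jk}$ and check the identity with the Peter--Weyl relations \eqref{eq-pw}. That computation does close: with $\Delta(u^{(s)}_{jk})=\sum_p u^{(s)}_{jp}\otimes u^{(s)}_{pk}$ and $S(u^{(s)}_{pj})^*=u^{(s)}_{jp}$ one finds both $h\big((u^{(s)}_{ij})^*L_\phi(u^{(t)}_{kl})\big)$ and $h\big(L_{\phi^\star}(u^{(s)}_{ij})^*u^{(t)}_{kl}\big)$ equal to $\delta_{st}\,f_{-1}(u^{(s)}_{ki})\,\phi(u^{(s)}_{jl})/D_s$, and uniqueness plus the injectivity of $\phi\mapsto L_\phi$ (via $\phi=\varepsilon\circ L_\phi$) finish the argument exactly as you say. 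What the paper's route buys is brevity and the structural insight that $\star$ is the adjoint involution for the right regular representation; what yours buys is independence from the cited references and an explicit formula. One caveat: your intermediate Sweedler display has a spurious conjugation. Since $L_{\phi^\star}(a)^*=a_{(1)}^*\,\overline{\phi^\star(a_{(2)})}=a_{(1)}^*\,\phi\big(S(a_{(2)})^*\big)$, the correct identity is $h\big(a^*L_\phi(b)\big)=\phi\big(S(a_{(2)})^*\big)\,h\big(a_{(1)}^*b\big)$, without the overline you wrote; this is exactly the bookkeeping issue you flagged, and your proposed basis check would catch and fix it.
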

\begin{proof}
This is simply the fact that the dual right representation is a
$*$-repre\-sen\-ta\-tion w.r.t.\ to the involution $\star$ and the inner product
defined by the Haar state as $\mathcal{A}\times\mathcal{A}\ni(a,b)\mapsto\langle
a,b\rangle=h(a^*b)\in\mathbb{C}$. The proof is the same as in the finite-dimensional case,
see \cite[Proposition 2.3]{vandaele97}. See also \cite[Proposition 3.4]{franz+skalski-erg08}.
\end{proof}

\label{symmetric_embedding}
The second embedding we can consider is the \emph{symmetric embedding}
$$i_s:\A \ni a \mapsto i_s(a) = \sigma_{-\frac{i}{4}}(a)\xi_h \in L^2 (A,h)$$
and the related notion of symmetry is the following.
\begin{definition} \label{kms_adjoint_alg}
 We shall call a map $L^\flat\in\mathcal{L}(\mathcal{A})$ the \emph{KMS-adjoint}
of
  $L\in\mathcal{L}(\mathcal{A})$, if  we have
\begin{equation} \label{kms_symm_condition_alg}
h\big(\sigma_{-\frac{i}{2}}(a)^*L(b)\big) = h\big(
L^\flat(a)^*\sigma_{-\frac{i}{2}}(b)\big)
\end{equation}
for all $a,b,\in\mathcal{A}$. An operator $L\in\mathcal{L}(\mathcal{A})$ is
called
  \emph{KMS-symmetric} if $L^\flat=L$.
\end{definition}

 Let us note here that a definition of KMS-symmetric operator on a von Neumann
algebra was introduced by Goldstein and Lindsay (\cite{goldstein+lindsay95}) in
the framework of (noncommutative) Haagerup $L^p$-spaces and by Cipriani in his
PhD thesis (cf. \cite{cipriani97}) in the context of the standard form of von
Neumann algebras. Later, in \cite[Definition 2.31]{cipriani08}, a definition of
a KMS-symmetric operator on a
C${}^*$-algebra was provided.

 In the sequel, we shall also need the definition of KMS-symmetric operators on
the whole C${}^*$-algebra. It is stated as follows.
\begin{definition} \label{kms_symm_cstar}
A linear map $L:\mathsf{A}\to\mathsf{A}$ is called \emph{KMS-symmetric} w.r.t.\
$h$ with modular automorphism group $(\sigma_t)_{t\in \R}$, if
\begin{equation} \label{kms_symm_condition}
h \big(a L(b)\big)= h\big(\sigma_{\frac{i}{2}}(b)L(\sigma_{-\frac{i}{2}}(a))\big)
\end{equation}
for all $a,b$ in a dense $\sigma$-invariant $*$-subalgebra $B$ of the C${}^*$-algebra $\mathsf{A}$.
\end{definition}
 Note that a continuous map $L:\mathsf{A} \to \mathsf{A}$ is KMS-symmetric in
the sense of Definition \ref{kms_symm_cstar} if it is $\A$-invariant (i.e. $L
(\A) \subset \A$), hermitian and $(\sigma,-1)$-KMS-symmetric in the sense of
\cite[Definition 2.31]{cipriani08}. The temperature $\beta=-1$ is chosen
according to the KMS-property of the Haar state
$h(ab)=h(b\sigma_{-i}(a))$, see Equation \eqref{KMS-state}.

The analogue of Proposition \ref{prop-self-adj} for KMS-symmetric operators on $\A$ is now the following.
\begin{theorem}\label{thm-kms-adj}
Let $\phi\in\mathcal{A}'$. Then there exists a unique convolution operator $L^\flat_\phi$ that
is the KMS-adjoint to $L_\phi$ w.r.t.\ the Haar state, i.e.\ that satisfies
\[
h\big(\sigma_{-\frac{i}{2}}(a)^*L_\phi(b)\big) = h\big(
L_\phi^\flat(a)^*\sigma_{-\frac{i}{2}}(b)\big)
\]
for all $a,b\in\mathcal{A}$. The KMS-adjoint of $L_\phi$ is given by
$L^\flat_\phi=L_{\phi^\#\circ R}$, where $R$ denotes the unitary
antipode.

\end{theorem}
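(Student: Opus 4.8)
The plan is to follow the template of Proposition \ref{prop-self-adj} but with the twist coming from the symmetric embedding $i_s$. First I would observe that the candidate $L^\flat_\phi = L_{\phi^\# \circ R}$ is itself a convolution operator (by the very definition $L_\psi = (\mathrm{id} \otimes \psi) \circ \Delta$ applied to $\psi = \phi^\# \circ R$), so that existence of a convolution operator adjoint is automatic once the defining identity \eqref{kms_symm_condition_alg} is verified; uniqueness then follows exactly as in the discussion preceding the definition of $i$-adjoint, since $h$ is faithful on $\mathcal{A}$ and the symmetric embedding $i_s$ has dense range (as $\sigma_{-i/4}$ is a bijection of $\mathcal{A}$). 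The core of the argument is therefore the computation of the left-hand side $h\big(\sigma_{-\frac{i}{2}}(a)^* L_\phi(b)\big)$ into the form $h\big(L_{\phi^\# \circ R}(a)^* \sigma_{-\frac{i}{2}}(b)\big)$.

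The main computation proceeds as follows. I would write $L_\phi(b) = \phi \star b = (\mathrm{id} \otimes \phi)\Delta(b) = b_{(1)} \phi(b_{(2)})$ in Sweedler notation, so the left-hand side becomes $h\big(\sigma_{-\frac{i}{2}}(a)^* b_{(1)}\big)\phi(b_{(2)})$. The first key tool is the KMS property \eqref{KMS-state}, $h(xy) = h(y \sigma_{-i}(x))$, which I would use to move the factor $\sigma_{-\frac{i}{2}}(a)^*$ and split the modular parameter symmetrically between the two arguments. The second, and more delicate, ingredient is the interplay between the Haar state's invariance and the convolution structure: I expect to need the right-invariance of $h$ together with the antipode property \eqref{antipode_property} to ``trade'' the factor $\phi(b_{(2)})$ acting on $b$ for a factor acting on $a$. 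Concretely, the strategy is to use the identity $h(x\, (\psi \star y)) = h((\psi^\star \star x)\, y)$ type relation — which is precisely the content of the dual right representation being a $\star$-representation, already exploited in Proposition \ref{prop-self-adj} — but now dressed with the modular automorphisms coming from $i_s$. The extra conjugations by $\sigma_{\pm i/2}$ are what convert the GNS-adjoint involution $\star$ (giving $\phi \mapsto \phi^\# \circ S$) into the KMS-adjoint involution $\phi \mapsto \phi^\# \circ R$, via the relation \eqref{eq-R} expressing $R$ in terms of $S$ and the Woronowicz characters, namely $R(a) = S(f_{\frac{1}{2}} \star a \star f_{-\frac{1}{2}})$.

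The hard part will be bookkeeping the modular pieces correctly: I must verify that the product $\sigma_{-\frac{i}{2}} \circ (\text{antipode adjustment}) \circ \sigma_{+\frac{i}{2}}$ collapses to the unitary antipode $R$ rather than to $S$ itself. This is where formula \eqref{eq-R} and the commutation relations among $S$, $\tau_z$, $\sigma_z$ and the $f_z$ from Section \ref{woronowicz_characters} — in particular $S = R \circ T$ with $T$ the closure of $\tau_{i/2}$, and the grouplike behavior $f_{z_1} \star f_{z_2} = f_{z_1+z_2}$ — must be combined. I anticipate the cleanest route is to first prove the result on matrix elements $u^{(s)}_{jk}$, where the Peter–Weyl relations \eqref{eq-pw} and the explicit action of $\sigma_z$ and $R$ on $V_s$ make every term computable, and then extend by linearity and density of $\mathcal{A}$; alternatively, one can keep everything at the level of the functional identity $\varphi \mapsto \varphi \circ \sigma_z$ leaving $h$ invariant \eqref{sigma-invariance}, which lets the modular factors be absorbed into $h$ at the right moments. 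Either way, the verification reduces to an identity among functionals that holds because $\sigma_z$, $\tau_z$ and $R$ are all expressible through the single commuting family $(f_z)_{z \in \C}$.
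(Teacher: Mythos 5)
Your plan is correct and follows essentially the same route as the paper: the paper first shows (via the $\sigma$-invariance of $h$, Equation \eqref{sigma-invariance}) that $L^\flat = \sigma_{\frac{i}{2}}\circ L^\star\circ\sigma_{-\frac{i}{2}}$, then applies Proposition \ref{prop-self-adj} to get $L^\star_\phi = L_{\phi^\#\circ S}$ and computes the conjugation in the convolution algebra, using \eqref{eq-R} to recognize the resulting functional as $\phi^\#\circ R$. This is exactly the mechanism you describe (the modular conjugations converting the involution $\phi\mapsto\phi^\#\circ S$ into $\phi\mapsto\phi^\#\circ R$), so the remaining work is only the bookkeeping you already flag.
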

\begin{proof}
Let us observe first that a linear map $L\in\mathcal{L}(\mathcal{A})$ admits a
  KMS-adjoint if and only if it admits a GNS-adjoint, and that the two
  adjoints are related by
\begin{equation}\label{eq-flat-star}
L^\flat = \sigma_{\frac{i}{2}}\circ L^\star \circ \sigma_{-\frac{i}{2}}.
\end{equation}
Indeed, if the GNS-adjoint exists then, by \eqref{gns_symm_condition} and the
$\sigma$-invariance of $h$, we have
$$h\big( \sigma_{-\frac{i}{2}}(a)^*L(b)\big) = h\big(L^\star (\sigma_{-\frac{i}{2}}(a))^* b \big)
= h\big( (\sigma_{\frac{i}{2}} \circ L^\star \circ \sigma_{-\frac{i}{2}})(a)^* \sigma_{-\frac{i}{2}}(b) \big). $$
Comparing with Equation \eqref{kms_symm_condition_alg} and using the
faithfulness of the Haar state, we deduce that $L^\flat$ exists and satisfies
\eqref{eq-flat-star}. Conversely, if the KMS-adjoint exists then, using similar
arguments, we show that the GNS-adjoint exists and $L^\star = \sigma_{-\frac{i}{2}}
\circ L^\flat \circ \sigma_{\frac{i}{2}}$.

Now, it follows from Proposition \ref{prop-self-adj} that $L^\flat_\phi$ exists and for all $a\in\mathcal{A}$ we have
\begin{eqnarray*}
L_\phi^\flat(a) &=&
f_{-\frac{1}{2}}\star\big(L_{\phi^\#\circ S}(
f_{\frac{1}{2}}\star a \star f_{\frac{1}{2}})\big)\star f_{-\frac{1}{2}} \\
&=& f_{-\frac{1}{2}}\star (\phi^\#\circ S) \star f_{\frac{1}{2}}\star a \\
&=& a_{(1)} \big ( f_{-\frac{1}{2}}\star (\phi^\#\circ S) \star f_{\frac{1}{2}}\big ) (a_{(2)})\\
&=& a_{(1)} f_{-\frac{1}{2}}(a_{(2)}) (\phi^\#\circ S)(a_{(3)})
f_{\frac{1}{2}}(a_{(4)}) \\
&=&a_{(1)} \big((\phi^\#\circ S)(f_{\frac{1}{2}}\star a_{(2)} \star
f_{-\frac{1}{2}})\big) \\
&=& a_{(1)}(\phi^\#\circ R)(a_{(2)}) = L_{\phi^\#\circ R}(a),
\end{eqnarray*}
since $R(a) = S(f_{\frac{1}{2}}\star a \star
f_{-\frac{1}{2}})$, see Equation \eqref{eq-R}.
\end{proof}

\begin{corollary} \label{cor_symmetries}
 Suppose that $\phi\in \A'$. Then
\begin{enumerate}
 \item $L_\phi$ is GNS-symmetric if and only if $\phi$ satisfies $\phi^\# \circ S=\phi$.
 \item $L_\phi$ is KMS-symmetric if and only if $\phi$ satisfies $\phi^\#\circ R=\phi$.
\end{enumerate}
\end{corollary}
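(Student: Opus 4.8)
The plan is to read off both equivalences directly from the adjoint formulas established in Proposition \ref{prop-self-adj} and Theorem \ref{thm-kms-adj}, combined with the injectivity of the dual right representation $\phi \mapsto L_\phi$ recorded in Section \ref{processes}. Recall from there that the functional can always be recovered from its convolution operator via $\psi = \e \circ L_\psi$, so that $\phi \mapsto L_\phi$ is injective; this is the only extra ingredient needed beyond the two preceding results.

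For the first assertion, Proposition \ref{prop-self-adj} already tells us that $L_\phi$ is GNS-symmetric if and only if $\phi^\star = \phi$. It then remains only to unwind the definition of the involution $\star$: since $\phi^\star(a) = \phi^\#(S(a))$ for all $a \in \A$, we have $\phi^\star = \phi^\# \circ S$ as functionals, and the criterion $\phi^\# \circ S = \phi$ follows at once.

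For the second assertion, I would begin from Theorem \ref{thm-kms-adj}, which identifies the KMS-adjoint as $L_\phi^\flat = L_{\phi^\# \circ R}$. By definition $L_\phi$ is KMS-symmetric exactly when $L_\phi^\flat = L_\phi$, that is $L_{\phi^\# \circ R} = L_\phi$; applying the recovery formula $\psi = \e \circ L_\psi$ to both sides turns this into the equality of functionals $\phi^\# \circ R = \phi$. One uses implicitly that $\phi^\# \circ R$ again belongs to $\A'$, which is clear since $R$ preserves $\A$ and $\#$ preserves $\A'$, so that $L_{\phi^\# \circ R}$ is a genuine convolution operator. Since both parts thus reduce to already-proved facts together with the injectivity of $\phi \mapsto L_\phi$, there is no substantive obstacle: the only care required is the routine bookkeeping of the involutions $\#$, $\star$ and the unitary antipode $R$.
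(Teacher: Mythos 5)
Your proposal is correct and follows exactly the route the paper intends: part (1) is Proposition \ref{prop-self-adj} together with the definition $\phi^\star=\phi^\#\circ S$, and part (2) is Theorem \ref{thm-kms-adj} combined with the injectivity of $\phi\mapsto L_\phi$ via $\phi=\e\circ L_\phi$. Nothing is missing.
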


The generating functionals $\phi$ of L\'evy processes are necessarily hermitian
($\phi^\#=\phi$). We call a hermitian functional $\phi$ on $\A$ $\phi$
\emph{GNS-symmetric} if it is invariant under the
antipode: $\phi\circ S=\phi$, and \emph{KMS-symmetric} if it is invariant
under the unitary antipode: $\phi\circ R=\phi$.

\begin{remark} \label{remark_symmetry}
A hermitian $\phi$ is GNS-symmetric if and only if each matrix $\phi^{(s)}=[\phi(u_{jk}^{(s)})]_{j,k}$ is hermitian:
$$\phi(u_{jk}^{(s)})=(\phi\circ S)(u_{jk}^{(s)}) =\phi((u_{kj}^{(s)})^*) =\overline{\phi(u_{kj}^{(s)})}.$$
\end{remark}

 We shall show now that invariance under the phase in the polar
decomposition of the antipode has also an influence on the properties of
$L_\phi$.
\begin{proposition}\label{prop-modgr}
Let $\phi\in \mathcal{A}'$. Then the following conditions are equivalent:
\begin{enumerate}
 \item $L_\phi$ commutes with the modular automorphism group $\sigma$,
 \item $\phi$ commutes with the Woronowicz characters: $\phi\star f_{z} = f_{z}\star \phi$ for $z\in \C$,
 \item $\phi \circ \tau_{\frac{i}{2}} = \phi$.
\end{enumerate}
\end{proposition}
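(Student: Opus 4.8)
The plan is to reduce all three conditions to commutation relations between $\phi$ and the Woronowicz characters $f_z$, and then to a single matrix commutation relation on each block $V_s$, which collapses the three statements into one by the spectral theorem.

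First I would record two elementary identities. Using $\tau_{\frac{i}{2}}=\rho_{-\frac12,\frac12}$ from \eqref{eq-mod-gr-Wor-ch} together with the definition of $\rho$, a direct Sweedler computation gives
\[
\phi\circ\tau_{\frac{i}{2}} = f_{\frac12}\star\phi\star f_{-\frac12},
\]
so condition (3) is equivalent to $f_{\frac12}\star\phi\star f_{-\frac12}=\phi$, i.e.\ (convolving by $f_{\frac12}$ on the right and using the group law $f_{z_1}\star f_{z_2}=f_{z_1+z_2}$) to $\phi\star f_{\frac12}=f_{\frac12}\star\phi$. On the other hand, since the operation $b\mapsto b\star f_z$ acts on the leg of $\Delta$ opposite to the one used by $L_\phi$, coassociativity shows that $L_\phi$ automatically commutes with every such right convolution. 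Writing $\sigma_t(a)=f_{it}\star a\star f_{it}$ and peeling off the invertible right convolution by $f_{it}$, condition (1) becomes equivalent to $L_\phi$ commuting with left convolution by $f_{it}$ for all $t\in\R$; and since $L_\phi\circ(f_z\star\,\cdot\,)=L_{\phi\star f_z}$, $(f_z\star\,\cdot\,)\circ L_\phi=L_{f_z\star\phi}$, and $\psi\mapsto L_\psi$ is injective, this is exactly $\phi\star f_{it}=f_{it}\star\phi$ for all $t\in\R$. Thus (1), (2), (3) assert respectively that $\phi$ commutes with $f_{it}$ for all real $t$, with $f_z$ for all complex $z$, and with $f_{\frac12}$.

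Next I would pass to matrix elements. Both $L_\phi$ and the automorphisms $\sigma,\tau$ preserve each $V_s$ (for $L_\phi$ this is immediate from $L_\phi(u^{(s)}_{jk})=\sum_p u^{(s)}_{jp}\phi(u^{(s)}_{pk})$ and the corepresentation property), so it suffices to work blockwise. Setting $\Phi_s=\big(\phi(u^{(s)}_{jk})\big)_{j,k}$ and recalling that $F_s:=\big(f_1(u^{(s)}_{jk})\big)_{j,k}$ is a strictly positive invertible matrix with $\big(f_z(u^{(s)}_{jk})\big)_{j,k}=F_s^{\,z}$, the convolutions translate into $\phi\star f_z\leftrightarrow\Phi_sF_s^{\,z}$ and $f_z\star\phi\leftrightarrow F_s^{\,z}\Phi_s$. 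Hence on $V_s$ condition (1) reads $[\Phi_s,F_s^{\,it}]=0$ for all $t\in\R$, condition (2) reads $[\Phi_s,F_s^{\,z}]=0$ for all $z\in\C$, and condition (3) reads $[\Phi_s,F_s^{1/2}]=0$.

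Finally, the crux is that these three matrix conditions coincide, and this is where strict positivity of $F_s$ enters. Writing $F_s=\exp(H_s)$ with $H_s=\log F_s$ self-adjoint, all of $F_s^{\,it}$ ($t\in\R$), $F_s^{\,z}$ ($z\in\C$) and $F_s^{1/2}$ are functions of the single self-adjoint matrix $H_s$ with the same spectral projections, so a matrix commutes with any one of them iff it commutes with all of them. I expect this spectral step to be the only real obstacle: it is precisely the passage from commuting with $f_{\frac12}$ (or with the one-parameter family $f_{it}$) to commuting with the whole analytic family $f_z$, and it fails without the positivity of the Woronowicz $F$-matrices, which is the one ingredient I would invoke from Woronowicz's Peter--Weyl theory beyond the listed properties of $f_z$. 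Assembling the blocks over $s\in\mathcal{I}$ then yields the equivalence of (1), (2) and (3).
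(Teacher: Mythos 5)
Your proof is correct and follows essentially the same route as the paper: reduce all three conditions to commutation of $\phi$ with the Woronowicz characters, then exploit the positivity and invertibility of the matrices $F^{(s)}$ together with the functional calculus to see that commuting with a single (half-)power is the same as commuting with the whole analytic family. The only cosmetic difference is that you run the spectral argument uniformly for all three equivalences, whereas the paper handles $(1)\Leftrightarrow(2)$ by analytic continuation in $z$ and reserves the functional-calculus step for $(3)\Rightarrow(2)$.
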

\begin{proof}
By Equation \eqref{eq-mod-gr-Wor-ch}, we have $L_\phi\circ \sigma_t=\sigma_t\circ L_\phi$ if and only if
\[
\phi \star f_{it}\star a \star f_{it} =f_{it} \star \phi \star a \star f_{it}
\]
for all $a\in\mathcal{A}$. Convolving by $f_{-it}$ from the right and applying
the counit, we see that $L_\phi$ commutes with the modular automorphism group, if and only
if
\[
\phi\star f_{it} = f_{it}\star \phi
\]
for all $t\in\mathbb{R}$, which is equivalent to
\begin{equation} \label{gen_comm_char}
\phi\star f_{z} = f_{z}\star \phi
\end{equation}
for all $z\in\mathbb{C}$ by uniqueness of analytic continuation. We have
shown this way that $(1) \Leftrightarrow (2)$.

{}From Equation \eqref{gen_comm_char} we deduce immediately that
$$\phi \circ \tau_{z}(a) = \phi(f_{iz} \star a \star f_{-iz}) = (f_{-iz} \star \phi \star f_{iz})(a) =\phi(a),$$
so $(2)$ implies $(3)$.

Finally, let us see that $(3)$ implies $(2)$. For that we adopt the matrix notation from \cite{woronowicz87}: $$F^{(s)}=[f_{-1}(u_{jk}^{(s)})]_{j,k=-s}^s \quad \mbox{and}\quad  \phi^{(s)}=[\phi(u_{jk}^{(s)})]_{j,k=-s}^s.$$

{}From therein we know that $F^{(s)}$ is invertible and positive and that
$f_z(u^{(s)})= (F^{(s)})^{-z}$. If $\phi \circ \tau_{\frac{i}{2}} = \phi$, then
by the definition of $\tau_z$ we have $\phi\star f_{-\frac12} =
f_{-\frac12}\star \phi$ and also $\phi\star f_{-1} = f_{-1}\star \phi$. This
means that
$$\phi^{(s)}F^{(s)}=F^{(s)} \phi^{(s)} $$
and by the functional calculus $\phi^{(s)}$ must commute with all $(F^{(s)})^z$ for $z\in \C$. This translates into $\phi\star f_{z} = f_{z}\star \phi$ for all $z\in \C$.
\end{proof}

It is known that on von Neumann algebras GNS-symmetry is a stronger condition than the KMS-one
(cf. \cite[Remarks after Definition 2.31]{cipriani08}). The previous
observation allows to provide a simple proof of this fact in our setting.
\begin{corollary} \label{gns_stronger_kms}
If $\phi$ is GNS-symmetric, then $\phi$ commutes with all Woronowicz characters and is KMS-symmetric.
\end{corollary}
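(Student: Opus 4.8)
The plan is to derive from $S$-invariance that $\phi$ commutes with the Woronowicz characters, after which KMS-symmetry falls out from the polar-decomposition formula \eqref{eq-R} for the unitary antipode. Throughout I use that $\phi$ is hermitian and that, by hypothesis, $\phi\circ S=\phi$.

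First I would iterate the $S$-invariance: applying $\phi\circ S=\phi$ twice gives $\phi\circ S^2=\phi$. The identity $S^2(a)=f_{-1}\star a\star f_1=\rho_{-1,1}(a)$ (property (5) of the Woronowicz characters), compared with $\tau_z=\rho_{iz,-iz}$, shows that $S^2=\tau_i$. Hence $\phi\circ\tau_i=\phi$, which by the computation in the proof of Proposition \ref{prop-modgr} reads $f_1\star\phi\star f_{-1}=\phi$, i.e.\ $\phi$ commutes with $f_1$. In the matrix notation $F^{(s)}=[f_{-1}(u^{(s)}_{jk})]$, $\phi^{(s)}=[\phi(u^{(s)}_{jk})]$ this is the commutation relation $\phi^{(s)}F^{(s)}=F^{(s)}\phi^{(s)}$ for every $s\in\mathcal{I}$.

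Next I would reuse the functional-calculus argument already appearing in the proof of Proposition \ref{prop-modgr}: since $F^{(s)}$ is positive and invertible, commutation with $\phi^{(s)}$ propagates to every power $(F^{(s)})^{z}=f_{-z}(u^{(s)})$, whence $\phi\star f_z=f_z\star\phi$ for all $z\in\C$. This is precisely the statement that $\phi$ commutes with all Woronowicz characters, the first assertion of the corollary; and the same matrix identity $(F^{(s)})^{-iz}\phi^{(s)}(F^{(s)})^{iz}=\phi^{(s)}$ shows $\phi\circ\tau_z=\phi$ for every $z\in\C$. For KMS-symmetry I would then invoke \eqref{eq-R}, which gives $R(a)=S\big(f_{1/2}\star a\star f_{-1/2}\big)=S\big(\tau_{-i/2}(a)\big)$, that is $R=S\circ\tau_{-i/2}$; therefore
\[
\phi\circ R=(\phi\circ S)\circ\tau_{-i/2}=\phi\circ\tau_{-i/2}=\phi ,
\]
using $\phi\circ S=\phi$ and the $\tau_z$-invariance just obtained. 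By Corollary \ref{cor_symmetries}(2) this says exactly that $\phi$ is KMS-symmetric.

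The decisive step is recognizing $S^2=\tau_i$. Bare $S$-invariance forces only each $\phi^{(s)}$ to be hermitian (Remark \ref{remark_symmetry}), which alone would \emph{not} imply commutation with the positive matrix $F^{(s)}$ (two self-adjoint matrices need not commute). It is the self-improvement $\phi\circ S=\phi\Rightarrow\phi\circ S^2=\phi=\phi\circ\tau_i$ that yields $\phi^{(s)}F^{(s)}=F^{(s)}\phi^{(s)}$; once this commutation is in hand, the functional calculus and the formula \eqref{eq-R} complete the proof routinely.
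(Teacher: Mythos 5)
Your proof is correct and follows essentially the same route as the paper: iterate $S$-invariance to get $\phi\circ S^2=\phi\circ\tau_i=\phi$, translate this into the matrix commutation $\phi^{(s)}F^{(s)}=F^{(s)}\phi^{(s)}$, propagate by functional calculus to all Woronowicz characters (exactly the argument of Proposition \ref{prop-modgr}), and then combine $R=S\circ\tau_{-i/2}$ with $\tau_z$-invariance to get $\phi\circ R=\phi$. Your remark that bare hermiticity of the $\phi^{(s)}$ would not suffice, and that the key is the self-improvement to $\tau_i$-invariance, correctly identifies the decisive point.
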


\begin{proof}
 For GNS-symmetric $\phi$ we have $\phi=\phi \circ S^2=\phi \circ \tau_i$, which translates into $\phi\star f_{-1} = f_{-1}\star \phi$. From the proof of Proposition \ref{prop-modgr} we see that this implies that $\phi$ is invariant under all $\tau_z$ ($z\in \C)$ or, equivalently, commutes with all Woronowicz characters. In particular $\phi = \phi \circ \tau_{\frac{i}{2}}$ and
$$\phi = \phi \circ \tau_{\frac{i}{2}} = (\phi \circ S) \circ  \tau_{\frac{i}{2}} =\phi \circ R.$$
\end{proof}

\begin{remark} \label{rem_kms_gns}
 If the algebra $\A$ is of Kac type ($S^2=\id$), then $R=S$ and the notions of
GNS-symmetry and KMS-symmetry coincide. However, Example \ref{ex_KMS_not_sym}
shows that in general KMS-symmetry is a weaker condition than GNS-symmetry.
\end{remark}

We end this Section with an observation linking the symmetries of the generators
and the related Markov semigroups.
\begin{theorem} \label{thm_symmetries_on_Markov}
Let $(T_t)_{t\ge 0}$ be the Markov semigroup of a L\'evy process on
$\mathcal{A}$ with generating functional $\phi$.
\begin{itemize}
\item[($a$)] The following three conditions are equivalent:
\begin{itemize}
 \item[($a1$)] $\phi$ is KMS-symmetric.
 \item[($a2$)] $L_\phi$ is KMS-symmetric.
 \item[($a3$)] for each $t\geq 0$, $T_t$ is KMS-symmetric on $\mathsf{A}$ $($see Definition \ref{kms_symm_cstar}$)$.
\end{itemize}
\item[($b$)] The following four conditions are equivalent:
\begin{itemize}
 \item[($b1$)] $\phi$ is GNS-symmetric.
 \item[($b2$)] $L_\phi$ is GNS-symmetric.
 \item[($b2'$)] $L_\phi$ satisfies the \emph{quantum detailed balance} condition,
i.e.\ we have
\begin{equation}\label{eq-qdb}
h\big(a L_\phi(b)\big) = h\big(L_\phi(a)b\big) \quad \mbox{for} \quad a,b\in\mathcal{A}.
\end{equation}
 \item[($b3$)] $(T_t)_{t\ge 0}$ satisfies the \emph{quantum detailed balance} condition, i.e.
\eqref{eq-qdb} holds for all $T_t$, $t\ge 0$.
\end{itemize}
\end{itemize}
\end{theorem}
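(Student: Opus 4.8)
The plan is to organise the argument around the two objects attached to the process, the generator $L_\phi$ and the semigroup $T_t=\exp(tL_\phi)=L_{\varphi_t}$, using throughout that a generating functional $\phi$ is hermitian ($\phi^\#=\phi$), whence $L_\phi$ and each $\varphi_t=\exp_\star(t\phi)$ and $T_t$ are hermitian too. With this, the equivalences (a1)$\Leftrightarrow$(a2) and (b1)$\Leftrightarrow$(b2) are immediate from Corollary \ref{cor_symmetries}: since $\phi^\#=\phi$, condition $\phi^\#\circ R=\phi$ of Corollary \ref{cor_symmetries}(2) reads $\phi\circ R=\phi$, i.e.\ KMS-symmetry of $\phi$, and $\phi^\#\circ S=\phi$ of Corollary \ref{cor_symmetries}(1) reads $\phi\circ S=\phi$, i.e.\ GNS-symmetry of $\phi$.

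For part (b) I would first treat (b2)$\Leftrightarrow$(b2'). The GNS-symmetry (b2) reads $h(a^*L_\phi(b))=h(L_\phi(a)^*b)$ while detailed balance (b2') reads $h(aL_\phi(b))=h(L_\phi(a)b)$; since $L_\phi$ is hermitian we have $L_\phi(a)^*=L_\phi(a^*)$, so replacing $a$ by $a^*$ (a bijection of $\mathcal{A}$) turns either identity into the other. For (b2')$\Leftrightarrow$(b3) I view (b2') as saying that $L_\phi$ is symmetric for the bilinear form $(a,b)\mapsto h(ab)$. As every element of $\mathcal{A}$ lies in a finite-dimensional subcoalgebra on which $T_t=\exp(tL_\phi)$ (fundamental theorem of coalgebras), symmetry of $L_\phi$ passes by induction to each power $L_\phi^n$ and hence, term by term, to $T_t$, giving (b3); conversely, differentiating $h(aT_t(b))=h(T_t(a)b)$ at $t=0$ and using $\frac{d}{dt}\big|_0 T_t=L_\phi$ recovers (b2').

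The core is (a2)$\Leftrightarrow$(a3), which I would split into an algebraic identity and a passage to the C${}^*$-algebra. Algebraically, the GNS-adjoint is a genuine Hilbert-space adjoint for the inner product $h(a^*b)$, so $\big(\exp(tL_\phi)\big)^\star=\exp(tL_\phi^\star)$; combining this with \eqref{eq-flat-star}, $L^\flat=\sigma_{\frac{i}{2}}\circ L^\star\circ\sigma_{-\frac{i}{2}}$, applied to $L_\phi$ and to the convolution operator $T_t=L_{\varphi_t}$, gives
\[
T_t^\flat=\sigma_{\frac{i}{2}}\circ\exp(tL_\phi^\star)\circ\sigma_{-\frac{i}{2}}=\exp\big(t\,\sigma_{\frac{i}{2}}\circ L_\phi^\star\circ\sigma_{-\frac{i}{2}}\big)=\exp(tL_\phi^\flat),
\]
using that conjugation by $\sigma_{\frac{i}{2}}$ commutes with the exponential and $\sigma_{\frac{i}{2}}\sigma_{-\frac{i}{2}}=\id$ on $\mathcal{A}$. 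Hence $T_t=T_t^\flat$ for all $t$ (KMS-symmetry of $T_t$ in the sense of Definition \ref{kms_adjoint_alg}) is equivalent, by differentiation at $t=0$, to $L_\phi^\flat=L_\phi$, that is, to (a2).

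It then remains to match the algebraic KMS-symmetry of Definition \ref{kms_adjoint_alg} with the C${}^*$-algebraic one of Definition \ref{kms_symm_cstar}. By Theorem \ref{extension-to-reduced} each $T_t$ is continuous on $\mathsf{A}$, it is $\mathcal{A}$-invariant by translation invariance, and it is hermitian because $\varphi_t$ is; so the remark after Definition \ref{kms_symm_cstar} reduces (a3) to verifying that the condition of Definition \ref{kms_adjoint_alg} (with $L=L^\flat$ hermitian) coincides on $\mathcal{A}$ with that of Definition \ref{kms_symm_cstar}. I would check this directly: starting from $h(\sigma_{-\frac{i}{2}}(a)^*L(b))=h(L(a)^*\sigma_{-\frac{i}{2}}(b))$, I use the analytic-continuation identity $\sigma_z(x)^*=\sigma_{\bar z}(x^*)$ (so $\sigma_{-\frac{i}{2}}(a)^*=\sigma_{\frac{i}{2}}(a^*)$) together with hermiticity $L(a)^*=L(a^*)$ to rewrite it as $h(\sigma_{\frac{i}{2}}(c)L(b))=h(L(c)\sigma_{-\frac{i}{2}}(b))$; substituting $c=\sigma_{-\frac{i}{2}}(a)$ and applying the KMS relation \eqref{KMS-state} in the form $h(XY)=h(\sigma_i(Y)X)$ yields exactly $h(aL(b))=h(\sigma_{\frac{i}{2}}(b)L(\sigma_{-\frac{i}{2}}(a)))$, and each step is reversible. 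The main obstacle is precisely this reconciliation: keeping track of the analytic continuations of $\sigma$ and of the KMS twist carefully enough to show that the two superficially different symmetry conditions agree.
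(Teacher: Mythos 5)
Your proof is correct, and its overall architecture matches the paper's: the equivalences $(x1)\Leftrightarrow(x2)$ come from Corollary~\ref{cor_symmetries}, and the passage between $L_\phi$ and $T_t$ is handled by pushing the symmetry through powers of $L_\phi$ to the exponential series on each finite-dimensional $V_s$, and back by differentiating at $t=0$. You deviate in two places. First, for $(b2)\Leftrightarrow(b2')$ you observe that hermiticity of $L_\phi$ (i.e.\ $L_\phi(a^*)=L_\phi(a)^*$) plus the substitution $a\mapsto a^*$ converts the GNS-symmetry identity $h(a^*L_\phi(b))=h(L_\phi(a)^*b)$ directly into the detailed-balance identity $h(aL_\phi(b))=h(L_\phi(a)b)$; the paper instead proves $(b2')\Leftrightarrow(b1)$ by computing that detailed balance forces $L_\phi=L_{\phi\circ S^{-1}}$, hence $\phi\circ S=\phi$. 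Your route is shorter and isolates where hermiticity is used; the paper's route makes the role of the antipode explicit. Second, you explicitly reconcile the algebraic KMS-adjoint of Definition~\ref{kms_adjoint_alg} with the C${}^*$-algebraic Definition~\ref{kms_symm_cstar}, using $\sigma_z(x)^*=\sigma_{\bar z}(x^*)$ and the KMS property \eqref{KMS-state}; the paper leaves this identification to the remark following Definition~\ref{kms_symm_cstar}, so your verification fills in a step the published proof only gestures at. Both arguments are sound.
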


\begin{proof}
The equivalences $(x1)\Leftrightarrow (x2)$ follow from Corollary
\ref{cor_symmetries}.

 The KMS-symmetry as well as the GNS-symmetry of $\phi$ is preserved under the
convolution powers (for example, if $\phi(Sa)=\phi(a)$, then $(\phi\star
\phi)(Sa)= (\phi \otimes \phi)(S(a_{(2)})\otimes S(a_{(1)})) =
\phi(a_{(1)})\phi(a_{(2)})= (\phi\star \phi)(a).$) Since $L_\phi^n (a) =
\phi^{\star n} \star a$, we see that both kinds of symmetry are also preserved
for the powers of $L_\phi$.
This implies that for $(T_t)_{t\ge 0}$, being of the form $T_t=\exp t L_\phi$,
the KMS-symmetry or condition \eqref{eq-qdb} of $(T_t)_{t\ge 0}$ is equivalent
to KMS-symmetry or \eqref{eq-qdb} of $L_\phi$.

Finally we need to check that $(b2')\Leftrightarrow (b1)$. Assume that $L_\phi$ satisfies \eqref{eq-qdb}.
Then, by Proposition \ref{prop-self-adj}, $L_\phi$ satisfies
$$L_\phi (a)=L^\star (a^*)^* = (\phi^\star \star a^*)^* = a_{(1)} \phi \big( S(a_{(2)}^*)^* \big) =L_{\phi \circ S^{-1}}(a),$$ which implies $\phi\circ S=\phi$.

Conversely, if $\phi\circ S = \phi$, then by the same calculation we see that
$$h\big(L_\phi(a)b\big) =h\big(L_{\phi\circ S^{-1}}(a)b\big) =h\big(L^\star_\phi(a^*)^*b\big) = h\big(a L_\phi(b)\big).$$
\end{proof}

\section{Sch\"{u}rmann triples corresponding to KMS-symmetric generators}
\label{sec_schurmann}

 In this Section we give a method to produce KMS-symmetric generating
functionals. To this aim, we recall the notion of a Sch\"{u}rmann triple and
describe its behavior under the composition of an arbitrary generator with the
unitary antipode.

Our steps are motivated by the following easy observation.

\begin{proposition}
Let $\phi$ be a generating functional of a L\'evy process. Then $\phi+\phi\circ
R$ is a KMS-symmetric generating functional of a L\'evy process.
\end{proposition}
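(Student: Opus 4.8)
The plan is to verify that $\psi := \phi + \phi\circ R$ satisfies the three defining properties of a generating functional recalled in Section~\ref{processes} --- hermitian, conditionally positive, and vanishing on $\1$ --- and is in addition invariant under $R$, which by the definition preceding Remark~\ref{remark_symmetry} is precisely what KMS-symmetry means for a hermitian functional. The $R$-invariance is immediate once hermiticity is in hand: since $R$ is an involution (see the discussion following \eqref{decomposition_of_s}) we have $R^2=\id$, so $\psi\circ R=\phi\circ R+\phi\circ R^2=\phi\circ R+\phi=\psi$.

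First I would collect the elementary properties of $R$ the argument needs. Because $R(\1)=\1$ (it is built from $S$ and $\tau_{\frac{i}{2}}$, both of which fix $\1$), we get $(\phi\circ R)(\1)=\phi(\1)=0$, hence $\psi(\1)=0$. For hermiticity I would use that $R$ commutes with the involution, $R(a^*)=R(a)^*$, as stated after \eqref{decomposition_of_s}; combined with $\phi^\#=\phi$ this yields $(\phi\circ R)^\#(a)=\overline{\phi(R(a^*))}=\overline{\phi(R(a)^*)}=\phi^\#(R(a))=(\phi\circ R)(a)$, so $\psi^\#=\phi^\#+(\phi\circ R)^\#=\psi$.

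The only genuine point is the conditional positivity of the new summand $\phi\circ R$, and here lies the main obstacle: $R$ is \emph{anti}multiplicative, so for $a\in\ker\e$ one finds $R(a^*a)=R(a)R(a^*)=R(a)R(a)^*$, which has the form $bb^*$ rather than $b^*b$, so conditional positivity of $\phi$ does not apply directly. To circumvent this I would first record the identity $\e\circ R=\e$: from \eqref{eq-R}, $\e(R(a))=\e\big(S(f_{\frac{1}{2}}\star a\star f_{-\frac{1}{2}})\big)=\e(f_{\frac{1}{2}}\star a\star f_{-\frac{1}{2}})=(f_{\frac{1}{2}}\star f_{-\frac{1}{2}})(a)=\e(a)$, using $\e\circ S=\e$ and $f_{\frac{1}{2}}\star f_{-\frac{1}{2}}=f_0=\e$. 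Setting $c:=R(a)^*$ we then have $\e(c)=\overline{\e(R(a))}=\overline{\e(a)}=0$, so $c\in\ker\e$, whence $\phi(R(a^*a))=\phi(c^*c)\ge 0$ by conditional positivity of $\phi$. Since also $\phi(a^*a)\ge 0$, we conclude $\psi(a^*a)\ge 0$ for all $a\in\ker\e$, which completes the verification that $\psi$ is a generating functional, necessarily KMS-symmetric by the first paragraph.
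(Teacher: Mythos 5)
Your proof is correct and follows essentially the same route as the paper's: verify the Schoenberg criteria for $\phi+\phi\circ R$ using $R(a^*)=R(a)^*$ and $\varepsilon\circ R=\varepsilon$, and deduce KMS-symmetry from $R^2=\mathrm{id}$. The paper leaves the verification as "easily checked"; your handling of conditional positivity via the antimultiplicativity of $R$ (writing $R(a^*a)=c^*c$ with $c=R(a)^*\in\ker\varepsilon$) correctly supplies the one nontrivial detail.
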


\begin{proof}
Since $R(a^*)=R(a)^*$ and $\e(R(a))=\e(a)$, we easily check that the Schoenberg
criteria for a generating functional are satisfied for $\phi+\phi\circ R$.
Moreover, $R^2={\rm id}$ implies that $\phi+\phi\circ R$ is invariant under the
unitary antipode.
\end{proof}
Note that the same procedure cannot be applied to the GNS-symmetric case, since $S$ does not preserve the positivity and is not involutive.

\medskip
For a pre-Hilbert space $D$ we denote by $\mathcal{L}^{\#}(D)$ the set of all
operators from $D$ to $D$ which admit an adjoint.
\begin{definition}
A \emph{Sch\"{u}rmann triple} on a $*$-bialgebra $\A$ with counit $\e$ is a
triple $((\pi,D), \eta, \phi)$ consisting of:
\begin{enumerate}
 \item a unital $*$-representation $\pi: \A \to \mathcal{L}^{\#}(D)$ of $\A$ on
some pre-Hilbert space $D$,
 \item a linear map $\eta: \A \to D$, called \emph{cocyle}, such that
$$\eta(ab)=\pi(a)\eta(b)+\eta(a)\e(b) \quad \mbox{for all}\; a,b \in \A ,$$
 \item a hermitian linear functional $\phi:\A \to \C$ satisfying
$$\phi(ab)=\langle \eta(a^*), \eta(b) \rangle \quad \mbox{for}\quad a,b\in \ker
\e.$$
\end{enumerate}
\end{definition}

Sch\"{u}rmann proved (cf. \cite{schuermann93}) that for any generating
functional $\phi$ of a L\'evy process there exists a Sch\"{u}rmann triple
$((\pi,D), \eta, \phi)$ (such that the generating functional is the last
ingredient of the triple). Moreover, the Sch\"{u}rmann triple is uniquely
determined (modulo unitary equivalence) provided that $\eta$ is surjective.

\begin{definition}
 Given a pre-Hilbert space $D$, the \emph{opposite space} $D^\op$ is defined as
$D^\op=\{\overline{v}: v\in D\}$ (the set of the same elements as $D$) with the
same addition $\overline{v}+\overline{w}=\overline{v+w}$, but with the scalar
multiplication given by
$\lambda \cdot \overline{v}= \overline{\overline{\lambda} v}$
and with the scalar product
$ \langle \bar{v}, \bar{w} \rangle_{\op} = \langle w, v \rangle.$

Given a unital $*$-representation $\pi: \A \to \mathcal{L}^{\#}(D)$ we define
$\pi^\op: \A \to \mathcal{L}^{\#}(D^\op)$ by the formula
$$\pi^\op(a)\bar{v} = \overline{(\pi\circ R)(a^*) v}, \quad \bar{v}\in D^\op.$$
We check directly that $\pi^\op$ is unital, multiplicative, and $*$-preserving,
so it is a $*$-representation of $\A$ on $D^\op$. We shall call it the
\emph{opposite representation}.

\end{definition}

\begin{theorem} \label{thm_schurmann_triples}
If $\phi$ is a generating functional of a L\'evy process with the
Sch\"{u}rmann triple $((\pi,D), \eta, \phi)$ on $\A$, then $\phi\circ R$ is a
generating functional of a L\'evy process with the Sch\"{u}rmann triple
$((\pi^\op,D^\op), \eta^\op, \phi\circ R)$ on $\A$ where $\pi^\op$ is the
opposite representation  with the representation space $D^\op$ and $\eta^\op: \A
\to D^\op$ is defined by $\eta^\op(a)=\overline{\eta(R(a^*))}$.
\end{theorem}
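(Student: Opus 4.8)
The plan is to verify, one condition at a time, that the triple $((\pi^\op, D^\op), \eta^\op, \phi\circ R)$ satisfies the three axioms of a Sch\"urmann triple, using the already-established facts that $\pi^\op$ is a unital $*$-representation (stated in the definition of the opposite representation), that $R$ is an antimultiplicative involution commuting with $*$ (from Equation \eqref{decomposition_of_s} and the surrounding discussion), and that $\phi+\phi\circ R$ being a generating functional tells us $\phi\circ R$ is hermitian, conditionally positive and vanishing on $\1$. First I would record the identities I will lean on repeatedly: $R(ab)=R(b)R(a)$, $R(a^*)=R(a)^*$, $R^2=\id$, and $\e\circ R=\e$. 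These turn the map $a\mapsto R(a^*)$ into a unital \emph{algebra anti-automorphism} that is its own inverse, which is exactly the symmetry that makes the opposite constructions work.

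The main verification is the cocycle property, i.e.\ that $\eta^\op(ab)=\pi^\op(a)\eta^\op(b)+\eta^\op(a)\e(b)$. I would start from $\eta^\op(ab)=\overline{\eta\big(R((ab)^*)\big)}=\overline{\eta\big(R(b^*)R(a^*)\big)}$, using antimultiplicativity of $R$, and then apply the cocycle property of the original $\eta$ to the product $R(b^*)R(a^*)$:
\[
\eta\big(R(b^*)R(a^*)\big)=\pi\big(R(b^*)\big)\eta\big(R(a^*)\big)+\eta\big(R(b^*)\big)\,\e\big(R(a^*)\big).
\]
Passing to the opposite space (where the scalar $\e(R(a^*))=\e(a)=\e(b)^{\;}$ must be tracked carefully because conjugation interacts with the opposite scalar multiplication, but $\e$-values are real on the relevant elements or are simply complex scalars that commute out) and reading off the definition $\pi^\op(a)\bar v=\overline{(\pi\circ R)(a^*)v}$, I expect the first term to become exactly $\pi^\op(a)\eta^\op(b)$ and the second to become $\eta^\op(a)\e(b)$. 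The bookkeeping here — matching the bar, the order swap from antimultiplicativity, and the opposite scalar multiplication $\lambda\cdot\bar v=\overline{\bar\lambda v}$ — is where I would be most careful, and this is the step most likely to hide a sign or conjugation error.

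Next I would check the functional condition: for $a,b\in\ker\e$ I must show $(\phi\circ R)(ab)=\langle \eta^\op(a^*),\eta^\op(b)\rangle_\op$. Unwinding the right side with the opposite inner product $\langle\bar v,\bar w\rangle_\op=\langle w,v\rangle$ gives $\langle \eta(R(b^*)),\eta(R(a))\rangle$, and applying the original triple's relation $\phi(xy)=\langle\eta(x^*),\eta(y)\rangle$ with $x=R(b^*)$, $y=R(a)$ should yield $\phi\big(R(b)R(a)\big)=\phi\big(R(ab)\big)=(\phi\circ R)(ab)$; I would confirm that $R(a),R(b)\in\ker\e$ (immediate from $\e\circ R=\e$) so the relation applies. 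Since hermiticity of $\phi\circ R$ is already available from the preceding Proposition, the three axioms are then all verified, completing the proof.
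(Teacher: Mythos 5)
Your overall strategy coincides with the paper's: verify the three Sch\"urmann-triple axioms directly, using antimultiplicativity of $R$, $R(a^*)=R(a)^*$, $R^2=\id$ and $\e\circ R=\e$. However, the central computation as you have written it fails because of an order error. The map $\theta(a)=R(a^*)$ is the composition of \emph{two} antimultiplicative maps, hence it is \emph{multiplicative} (a conjugate-linear automorphism), not an anti-automorphism as you assert: $R((ab)^*)=R(b^*a^*)=R(a^*)R(b^*)$. You instead write $R((ab)^*)=R(b^*)R(a^*)$, and expanding that by the cocycle identity gives $\pi(R(b^*))\eta(R(a^*))+\eta(R(b^*))\e(R(a^*))$, whose image in $D^{\op}$ is $\pi^{\op}(b)\eta^{\op}(a)+\eta^{\op}(b)\e(a)$ --- the Leibniz rule for $ba$, not for $ab$. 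So the step you flag as ``most likely to hide a sign or conjugation error'' does in fact contain one, and from your intermediate expression the claimed conclusion $\pi^{\op}(a)\eta^{\op}(b)+\eta^{\op}(a)\e(b)$ does not follow. The repair is precisely the observation that $\theta$ preserves the order of products; with $R((ab)^*)=R(a^*)R(b^*)$ the paper's computation goes through verbatim: $\overline{\pi(R(a^*))\eta(R(b^*))}=\pi^{\op}(a)\eta^{\op}(b)$ and $\overline{\eta(R(a^*))\,\e(R(b^*))}=\eta^{\op}(a)\e(b)$, using $\e(R(b^*))=\overline{\e(b)}$ and the opposite scalar multiplication.

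Two smaller points. First, your justification that $\phi\circ R$ is a generating functional is stated backwards: knowing that $\phi+\phi\circ R$ is a generating functional does not by itself yield hermiticity or conditional positivity of $\phi\circ R$; both facts follow instead from the same properties of $R$ together with the Schoenberg correspondence, which is how the paper argues. Second, in the verification of $\phi(xy)=\langle\eta(x^*),\eta(y)\rangle$ you should take $x=R(b)$ (so that $x^*=R(b^*)$), not $x=R(b^*)$; your final identity $\phi(R(b)R(a))=\phi(R(ab))=(\phi\circ R)(ab)$ is nevertheless correct and matches the paper.
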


\begin{proof}
 Let $\phi$ be a generating functional of a L\'evy process. Then it follows from the
properties of $R$, mentioned after formula \eqref{decomposition_of_s}, that
$\phi\circ R$ is hermitian, conditionally positive and vanishes at $\1$. By the
Schoenberg correspondence, $\phi\circ R$ is a generating functional of a L\'evy process.

 Now we want to check that $((\pi^\op,D^\op), \eta^\op, \phi\circ R)$ is a
Sch\"{u}rmann triple. For that, note that $\eta^\op$ is linear and by the
cocycle property of $\eta$ we have
\begin{eqnarray*}
 \eta^\op (ab) &=& \overline{\eta(R((ab)^*))}=\overline{\eta(R(a^*)R(b^*))} \\
&=&\overline{\pi(R(a^*))\eta(R(b^*))}+ \overline{\eta(R(a^*))\e(R(b^*))} \\
&=&\pi^\op(a)\overline{\eta(R(b^*))}+ \overline{\eta(R(a^*))}\e(b)\\
&=& \pi^\op(a)\eta^\op (b)+ \eta^\op (a)\e(b).
\end{eqnarray*}
Moreover, $\phi\circ R$ is linear and hermitian, and for $a,b\in \ker \e$ we have
\begin{eqnarray*}
 \langle \eta^\op (a^*), \eta^\op(b) \rangle_\op
&=& \langle \overline{\eta(R(a))}, \overline{\eta(R(b^*))} \rangle_\op = \langle \eta(R(b^*)), \eta (R(a))\rangle \\
&=& \phi \big( R(b)R(a) \big) = (\phi \circ R)(ab).
\end{eqnarray*}
\end{proof}

\begin{corollary}
 If $\phi$ is invariant under $R$ and $((\pi,D), \eta, \phi)$ is the related
surjective Sch\"{u}rmann triple, then $\pi$ is equivalent to its opposite
representation $\pi^{\op}$.
\end{corollary}

\begin{corollary}\label{cor_schurmann}
If $\phi$ is a generating functional of a L\'evy process with surjective
Sch\"{u}rmann triple $((\pi,D), \eta, \phi)$ on $\A$, then $\big( (\pi\oplus
\pi^{\op},D\oplus D^\op), \eta\oplus\eta^\op, \phi+\phi\circ R \big)$ is a
Sch\"{u}rmann triple of a KMS symmetric generator $\phi+\phi\circ R$.
\end{corollary}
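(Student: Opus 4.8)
The plan is to verify directly that the proposed triple satisfies the three defining axioms of a Sch\"urmann triple, relying on the fact that each of its two summands is already a Sch\"urmann triple. First I would observe that $\pi\oplus\pi^\op$ is a direct sum of two unital $*$-representations, hence itself a unital $*$-representation of $\A$ on the pre-Hilbert space $D\oplus D^\op$, equipped with the sum inner product $\langle (v_1,v_2),(w_1,w_2)\rangle = \langle v_1,w_1\rangle + \langle v_2,w_2\rangle_\op$. Here $\pi$ is a Sch\"urmann representation for $\phi$ by hypothesis, while $\pi^\op$ is the opposite representation attached to $\phi\circ R$ by Theorem \ref{thm_schurmann_triples}.

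Next I would check the cocycle identity for $\eta\oplus\eta^\op$ componentwise. Since $\eta$ is a $\pi$-cocycle by hypothesis and $\eta^\op$ is a $\pi^\op$-cocycle by Theorem \ref{thm_schurmann_triples}, applying both identities at once gives
\[
(\eta\oplus\eta^\op)(ab) = \big(\pi(a)\eta(b)+\eta(a)\e(b),\, \pi^\op(a)\eta^\op(b)+\eta^\op(a)\e(b)\big),
\]
which is precisely $(\pi\oplus\pi^\op)(a)(\eta\oplus\eta^\op)(b) + (\eta\oplus\eta^\op)(a)\e(b)$. Thus $\eta\oplus\eta^\op$ is a cocycle for $\pi\oplus\pi^\op$.

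Then I would verify the coboundary relation. For $a,b\in\ker\e$, the sum inner product splits as $\langle (\eta\oplus\eta^\op)(a^*),(\eta\oplus\eta^\op)(b)\rangle = \langle \eta(a^*),\eta(b)\rangle + \langle \eta^\op(a^*),\eta^\op(b)\rangle_\op$. The first term equals $\phi(ab)$ by the coboundary property of the original triple, and the second equals $(\phi\circ R)(ab)$ by Theorem \ref{thm_schurmann_triples}; summing gives $(\phi+\phi\circ R)(ab)$, as required. Hermiticity of $\phi+\phi\circ R$ is immediate from that of $\phi$ and of $\phi\circ R$, and the fact that $\phi+\phi\circ R$ is a KMS-symmetric generating functional of a L\'evy process is exactly the content of the Proposition at the beginning of this section.

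Since every step is a componentwise bookkeeping argument built on results already in hand (Theorem \ref{thm_schurmann_triples} for the opposite triple and the opening Proposition for KMS-symmetry), I do not expect any genuine obstacle here. The only point requiring minor care is keeping track of the opposite scalar product $\langle\cdot,\cdot\rangle_\op$ on the $D^\op$ summand, but this bookkeeping has already been absorbed into the statement and proof of Theorem \ref{thm_schurmann_triples}, so it carries over to the direct sum without incident.
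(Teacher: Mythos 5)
Your proposal is correct and follows the same route the paper intends: the corollary is an immediate consequence of Theorem \ref{thm_schurmann_triples} (supplying the Sch\"urmann triple for $\phi\circ R$), the opening Proposition of the section (supplying KMS-symmetry of $\phi+\phi\circ R$), and the standard componentwise verification that a direct sum of Sch\"urmann triples is a Sch\"urmann triple for the sum of the generating functionals. The paper leaves exactly this bookkeeping implicit, so your write-up simply makes it explicit.
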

 Note that the Sch\"{u}rmann triple $\big( (\pi\oplus \pi^{\op},D\oplus
D^\op), \eta\oplus\eta^\op, \phi+\phi\circ R \big)$ in the Corollary
\ref{cor_schurmann} is not necessarily
surjective, even if the triple $((\pi,D), \eta, \phi)$ is surjective. This is
for example the case if $\phi$ is already KMS symmetric -- then the range of
$\eta\oplus\eta^\op$ is the diagonal of $D\oplus D^\op$.

\begin{remark} \label{rem_bounded_reps}
Let $\phi$ be a generating functional of a L\'evy process on $\A$ with the
associated Sch\"urmann triple $((\pi,D), \eta, \phi)$, i.e.\ $((\pi,D), \eta, \phi)$ is the unique Sch\"urmann triple for $\phi$ with a surjective cocycle.
If $\A$ is an algebraic quantum group ``of compact type``, i.e.\ is the $*$-subalgebra of polynomials of a compact quantum group $\G=(\mathsf{A},\Delta)$, then $\A$ is linearly spanned by the coefficients of unitary corepresentations and thus for every $a\in \A$, $\pi(a)$ is a bounded operator in $D$. In this
case the space $D$ can be completed to a Hilbert space $H$, $\eta:\A\to D$ turns into a cocycle $\eta:\A\to H$ with dense image, and $\pi$ maps $\A$ to $\mathcal{B}(H)$.
\end{remark}

\section{Generating functionals invariant under adjoint action}
\label{sec_adinv}

On classical Lie groups, central measures play an important role in harmonic
analysis and the study
of L\'evy processes. A measure $\mu$ on a topological group $G$ is called
\emph{central}, if it commutes with all other measures (w.r.t.\ to the
convolution). This is the case if
\[
\int_G f(gxg^{-1}){\rm d}\mu(x) = \int_G f(x){\rm d}\mu(x)
\]
for all $g\in G$ and $f\in C(G)$, or, equivalently, if $\delta_g\star\mu\star\delta_{g^{-1}}=\mu$
for all $g\in G$. On compact quantum groups we don't have Dirac measures, but
we can translate this condition to
\[
\psi_{(1)} \star \mu \star \hat{S}(\psi_{(2)}) = \psi(1) \mu
\]
for all functionals $\psi:\mathcal{A}\to\mathbb{C}$, for which $({\rm
  id}\otimes \hat{S})\circ\hat{\Delta} (\psi)= \psi_{(1)}\otimes
\hat{S}(\psi_{(2)})$ can be defined, i.e.\ for functionals which belong to the
algebra of smooth functions $\hat{\mathcal{A}}$ on the dual discrete quantum
group. This condition is equivalent to invariance of the functional $\mu$ under the
adjoint action, see below.

In this Section we will study ${\rm ad}$-invariance for functionals on compact
quantum groups. On cocommutative compact quantum groups (i.e.\ such that $\tau \circ \Delta = \Delta$,
where $\tau$ is the flip operator $\tau (x \otimes y) = y\otimes x$) all functionals are
${\rm ad}$-invariant, but on non-cocommutative compact quantum groups, ${\rm
  ad}$-invariance characterizes an interesting class of functionals that
share many similar properties with central measures. After reviewing several
characterizations and showing that the ${\rm ad}$-invariant functionals are
exactly those that belong to the center of $\mathcal{A}'$,  we show that it is possible to construct from a given functional an $\ad$-invariant one. But this construction does not preserve positivity.

\medskip
Recall that the \emph{adjoint action} of a Hopf algebra is defined by ${\rm
  ad}:\mathcal{A}\to\mathcal{A}\otimes\mathcal{A}$,
\[
{\rm ad}(a) = a_{(1)} S(a_{(3)}) \otimes a_{(2)}
\]
for $a\in\mathcal{A}$, see, e.g., \cite{majid95}, \cite[Section 1.3.4]{klimyk+schmuedgen97}.

The adjoint action is a left coaction, i.e.\ we have
\begin{eqnarray*}
({\rm id}\otimes {\rm ad})\circ {\rm ad} &=& (\Delta\otimes {\rm id})\circ{\rm
  ad}, \\
(\varepsilon\otimes {\rm id})\circ{\rm ad} &=& {\rm id}.
\end{eqnarray*}
But note that ${\rm ad}$ is not an algebra homomorphism.

\begin{definition}
We call a linear functional $\phi\in\mathcal{A}'$ \emph{${\rm ad}$-invariant}, if it satisfies
\[
({\rm id}\otimes \phi)\circ {\rm ad} = \phi \mathbf{1}_{\mathcal{A}}.
\]
Similarly, a linear map $L\in \mathcal{L}(\mathcal{A})$ is called \emph{${\rm
ad}$-invariant}, if it satisfies
\[
({\rm id}\otimes L)\circ {\rm ad} = {\rm ad}\circ L.
\]
\end{definition}

If the quantum group is cocommutative, then the
adjoint action is the trivial coaction $\ad (a) = \1 \otimes a$. Therefore in
this case all functionals are ${\rm ad}$-invariant.

It is straightforward to verify that the counit $\varepsilon$ and the Haar
state $h$ are ${\rm ad}$-invariant.

The following characterisations show that the ${\rm ad}$-invariant functionals
are a natural generalisation of central measures.

\begin{proposition}
Let $\phi\in \mathcal{A}'$. The following conditions are equivalent.
\begin{description}
\item[(a)]
$\phi$ is ${\rm ad}$-invariant.
\item[(b)]
We have
\[
\psi_{(1)} \star \phi \star \hat{S}(\psi_{(2)}) = \psi(1) \phi
\]
for all $\psi\in\hat{\mathcal{A}}$.
\item[(c)]
$\phi$ commutes with all elements of $\hat{\mathcal{A}}$: $\phi \star \psi = \psi \star \phi$ for all $\psi \in \hat{\A}$.
\item[(d)]
$\phi$ belongs to the center of $\mathcal{A}'$: $\phi \star \psi = \psi \star \phi$ for all $\psi \in \A'$.
\end{description}
\end{proposition}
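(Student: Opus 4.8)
The plan is to show that all four conditions are equivalent to the single algebra-valued \emph{centrality identity}
\[
(\star)\qquad (\id\otimes\phi)\circ\Delta = (\phi\otimes\id)\circ\Delta,
\]
i.e.\ $a_{(1)}\phi(a_{(2)}) = \phi(a_{(1)})a_{(2)}$ for all $a\in\A$, and then to collect the resulting equivalences. I would first dispose of (c) and (d) together: evaluating the convolutions on $a\in\A$ gives $(\phi\star\psi)(a)=\psi\big((\phi\otimes\id)\Delta(a)\big)$ and $(\psi\star\phi)(a)=\psi\big((\id\otimes\phi)\Delta(a)\big)$, so $\phi\star\psi=\psi\star\phi$ for every $\psi$ ranging over a point-separating subset of $\A'$ is equivalent to $(\star)$. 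Since $\A'$ trivially separates points of $\A$ and $\hat\A$ separates them by faithfulness of $h$, both (d) and (c) are equivalent to $(\star)$, hence to each other.

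Next I would prove (a)$\,\Leftrightarrow\,(\star)$ by a Sweedler-notation computation using the antipode and counit axioms \eqref{antipode_property}, \eqref{counit_property}. For $(\star)\Rightarrow$(a), starting from $\ad(a)=a_{(1)}S(a_{(3)})\otimes a_{(2)}$ I apply $(\star)$ on the legs $a_{(1)}\otimes a_{(2)}$ to rewrite $a_{(1)}S(a_{(3)})\phi(a_{(2)})=\phi(a_{(1)})\,a_{(2)}S(a_{(3)})$, and then collapse $a_{(2)}S(a_{(3)})=\e(a_{(2)})\1$ by the antipode axiom, obtaining $(\id\otimes\phi)\ad(a)=\phi(a)\1$. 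For the converse I would multiply the defining identity of (a), namely $a_{(1)}S(a_{(3)})\phi(a_{(2)})=\phi(a)\1$, on the right by the extra leg $a_{(4)}$ of $\Delta^{(3)}(a)$ and evaluate the product in two ways: either collapsing $S(a_{(3)})a_{(4)}=\e(a_{(3)})\1$ to reach $a_{(1)}\phi(a_{(2)})$, or applying (a) to the first three legs to reach $\phi(a_{(1)})a_{(2)}$; comparing gives $(\star)$.

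Finally I would establish (a)$\,\Leftrightarrow\,$(b). Here I expand the triple convolution against $a$, using $\hat S(\psi_{(2)})(a_{(3)})=\psi_{(2)}(S(a_{(3)}))$ and the pairing $\psi_{(1)}(x)\psi_{(2)}(y)=\psi(xy)$, to obtain $\big(\psi_{(1)}\star\phi\star\hat S(\psi_{(2)})\big)(a)=\psi\big((\id\otimes\phi)\ad(a)\big)$, while $\big(\psi(\1)\phi\big)(a)=\psi\big(\phi(a)\1\big)$. Thus (b) asserts exactly that $\psi$ annihilates $(\id\otimes\phi)\ad(a)-\phi(a)\1$ for all $\psi\in\hat\A$ and all $a$, which by point separation of $\hat\A$ is precisely (a). This closes the circle of equivalences.

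The main obstacle is bookkeeping rather than conceptual depth: keeping the multi-leg Sweedler computations in (a)$\,\Leftrightarrow\,(\star)$ straight, and, on the dual side, ensuring that the manipulation $\hat\Delta(\psi)=\psi_{(1)}\otimes\psi_{(2)}$ used in (b) is legitimate even though $\hat\Delta(\psi)$ a priori lives in the multiplier algebra $M(\hat\A\odot\hat\A)$ rather than in $\hat\A\odot\hat\A$. This is harmless, since for each fixed $a$ one only pairs $\hat\Delta(\psi)$ against the finite tensor $a_{(1)}\otimes S(a_{(3)})\in\A\odot\A$, so that $\sum\psi_{(1)}(a_{(1)})\psi_{(2)}(S(a_{(3)}))=\psi\big(a_{(1)}S(a_{(3)})\big)$ remains valid; I would make this reduction explicit to keep the dual-side computation rigorous.
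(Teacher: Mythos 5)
Your proof is correct, and all of the individual computations check out: the identification $(\psi\star\phi)(a)=\psi\big((\id\otimes\phi)\Delta(a)\big)$ and $(\phi\star\psi)(a)=\psi\big((\phi\otimes\id)\Delta(a)\big)$, the two Sweedler computations linking $(\star)$ with ad-invariance, the pairing identity $\big(\psi_{(1)}\star\phi\star\hat S(\psi_{(2)})\big)(a)=\psi\big((\id\otimes\phi)\,\mathrm{ad}(a)\big)$, and the remark that pairing $\hat\Delta(\psi)$ against a fixed finite tensor in $\A\odot\A$ is unproblematic. The overlap with the paper is substantial — your (a)$\Leftrightarrow$(b) is verbatim the paper's argument, and your (a)$\Rightarrow(\star)$ is exactly the paper's four-leg computation for (a)$\Rightarrow$(c), just phrased as an identity in $\A$ rather than paired against $\psi$. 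Where you genuinely diverge is in the organization and in (c)$\Leftrightarrow$(d): you route everything through the algebra-valued centrality identity $(\star)$, i.e.\ $(\id\otimes\phi)\circ\Delta=(\phi\otimes\id)\circ\Delta$, and obtain both (c) and (d) from it by point-separation (trivially for $\A'$, via faithfulness of $h$ for $\hat\A$). The paper instead closes the cycle (a)$\Rightarrow$(c)$\Rightarrow$(b)$\Rightarrow$(a) and handles (c)$\Leftrightarrow$(d) by embedding $\A'$ into the multiplier algebra $\mathcal{M}(\hat\A)$, writing $\psi\star h_a$ and $h_a\star\psi$ explicitly as elements $h_c$, $h_d$ of $\hat\A$. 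Your treatment of (c)$\Leftrightarrow$(d) is arguably cleaner since it avoids the multiplier algebra and the formula involving $S^{-1}$; the price is a small amount of redundancy (you prove (a)$\Leftrightarrow(\star)$ in both directions and (a)$\Leftrightarrow$(b) separately, so the equivalences are over-determined), and you lose the explicit multiplier description, which is of independent interest. Either way the argument is complete and rigorous.
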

\begin{proof}
\begin{description}
\item[(a)$\Leftrightarrow$(b)]
If $\phi:\mathcal{A}\to\mathbb{C}$ is ${\rm ad}$-invariant, then we have
\[
a_{(1)}S(a_{(3)}) \phi(a_{(2)}) = \phi(a)1.
\]
Applying the functional $\psi=h_b\in \hat{\mathcal{A}}$ with
$b\in\mathcal{A}$ to this, we get
\begin{eqnarray*}
\psi(1)\phi(a) &=& \psi\big( a_{(1)}S(a_{(3)})\big) \phi(a_{(2)}) =
\psi_{(1)}(a_{(1)}) \psi_{(2)}\big(S(a_{(3)})\big) \phi(a_{(2)}) \\
&=&\psi_{(1)}(a_{(1)}) \phi(a_{(2)})\hat{S}(\psi_{(2)})(a_{(3)}) =
\big(\psi_{(1)}\star \phi \star \hat{S}(\psi_{(2)})\big)(a)
\end{eqnarray*}
for all $a\in\mathcal{A}$ and all $\psi\in\hat{\mathcal{A}}$. The converse
follows, because by the faithfulness of the Haar state on $\mathcal{A}$ we
have
\[
\forall b \in \mathcal{A},\quad \psi(a)=h(ba)=0 \quad \Rightarrow \quad a=0.
\]
\item[(c)$\Rightarrow$(b)]
This follows directly from the antipode axiom,
\[
\psi_{(1)} \star \phi \star \hat{S}(\psi_{(2)}) = \psi_{(1)} \star
\hat{S}(\psi_{(2)})\star \phi = \hat{\varepsilon}(\psi)\hat{1}\star \phi =
\psi(1)\phi,
\]
where $\hat{1}=\e$ is the unit of $\A'$.
\item[(a)$\Rightarrow$(c)]
Suppose that $\phi$ is ${\rm ad}$-invariant and apply $\psi\circ m \circ ({\rm
  id}\otimes \phi\otimes {\rm id})\circ ({\rm ad}\otimes {\rm id})$ to
$\Delta(a)$, then this gives
\[
\psi\big(a_{(1)}S(a_{(3)}) a_{(4)}\big) \phi(a_{(2)})
\]
which is equal to $\psi(a_{(1)})\phi(a_{(2)})= (\psi\star \phi)(a)$ by the
antipode axiom. On the other hand, using the ${\rm ad}$-invariance of $\phi$,
the same expression becomes
\[
\psi(1 a_{(2)}) \phi(a_{(1)}) = (\phi\star\psi)(a).
\]
\item[(c)$\Leftrightarrow$(d)]
This follows, because $\mathcal{A}'$ embeds into the multiplier algebra
$\mathcal{M}(\hat{\mathcal{A}})$ of $\hat{\mathcal{A}}$, since
\[
\psi\star h_a = h_c,\qquad h_a\star \psi = h_d
\]
with $c=\psi(S(a_{(1)}))a_{(2)}$, $d=\psi(S^{-1}(a_{(2)}))a_{(1)}$.
\end{description}
\end{proof}

\begin{corollary} \label{cor_adinvariance_algebra}
The ${\rm ad}$-invariant functionals form a unital subalgebra of
$\mathcal{A}'$ with respect to the convolution.
\end{corollary}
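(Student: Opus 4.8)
The plan is to deduce this immediately from the preceding Proposition, and in particular from the equivalence (a)$\Leftrightarrow$(d), which identifies the $\ad$-invariant functionals with the center of the convolution algebra $(\A',\star)$. Once that identification is available, the statement collapses to the purely algebraic fact that the center of an associative unital algebra is a unital subalgebra, so the entire mathematical content has already been absorbed into the Proposition.

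First I would record that $(\A',\star)$ is a unital algebra with unit $\e$: indeed $\e\star\phi=\phi=\phi\star\e$ for every $\phi\in\A'$ by the counit property \eqref{counit_property}. Next, that the $\ad$-invariant functionals form a linear subspace is clear, since $\ad$ is linear and the defining relation $(\id\otimes\phi)\circ\ad=\phi\mathbf{1}_{\A}$ is linear in $\phi$. For closure under convolution I would invoke characterization (d): if $\phi,\psi$ are $\ad$-invariant then both lie in the center, and for an arbitrary $\chi\in\A'$ a short associativity computation gives
\[
(\phi\star\psi)\star\chi=\phi\star(\chi\star\psi)=\chi\star(\phi\star\psi),
\]
where the first equality uses centrality of $\psi$ together with associativity, and the second uses centrality of $\phi$ together with associativity. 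Hence $\phi\star\psi$ is again central, i.e.\ $\ad$-invariant. Finally, unitality follows from the observation, already noted before the Proposition, that the counit $\e$ is $\ad$-invariant; equivalently, $\e$ lies trivially in the center. Thus the $\ad$-invariant functionals are closed under convolution and contain the unit, and therefore form a unital subalgebra of $\A'$.

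I do not anticipate any genuine obstacle here. The only points demanding even minimal care are verifying that the unit of $(\A',\star)$ is indeed $\e$ and that $\e$ is $\ad$-invariant, both of which are immediate; everything else is the elementary remark that a center is a unital subalgebra. One could alternatively argue directly from characterization (c) (commutation with all of $\hat{\A}$), but routing the argument through (d) is the most transparent, since it makes the subalgebra assertion a one-line consequence of the center being closed under the algebra operations.
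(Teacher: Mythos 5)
Your proof is correct and follows exactly the route the paper intends: the corollary is stated without proof precisely because characterization (d) of the preceding Proposition identifies the $\ad$-invariant functionals with the center of $(\mathcal{A}',\star)$, and the center of a unital associative algebra is a unital subalgebra. Your verification that $\varepsilon$ is the unit and is $\ad$-invariant fills in the only remaining (trivial) details.
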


The following formula shows that the coproduct
$\Delta:\mathcal{A}\to\mathcal{A}\otimes\mathcal{A}$ is ${\rm ad}$-invariant,
if we define the adjoint action of $\mathcal{A}\otimes\mathcal{A}$ by ${\rm
  ad}^\otimes=(m\otimes {\rm id}\otimes{\rm id}) \circ ({\rm id}\otimes \tau\otimes{\rm id})\circ ({\rm ad}\otimes{\rm ad})$.

\begin{lemma} \label{lemma_adjoint}
The adjoint action satisfies the relation
\[
(m\otimes {\rm id}\otimes{\rm id}) \circ ({\rm id}\otimes \tau\otimes{\rm id})\circ ({\rm ad}\otimes{\rm
  ad})\circ \Delta = ({\rm id}\otimes \Delta)\circ {\rm ad}.
\]
\end{lemma}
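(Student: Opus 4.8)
The plan is to evaluate both sides of the asserted identity on an arbitrary element $a\in\A$ and to compare the results, written in Sweedler notation, using nothing more than coassociativity of $\Delta$ and the antipode axiom \eqref{antipode_property}. Since both sides are linear maps $\A\to\A^{\otimes 3}$ built entirely from $\Delta$, $m$, $S$ and $\tau$, it suffices to show that they produce the same tensor after all the legs have been laid out consistently.

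First I would unravel the left-hand side. Applying $\Delta$ and then $\ad\otimes\ad$ splits $a$ into two groups of three legs each, so coassociativity gives
\[
(\ad\otimes\ad)\circ\Delta(a) = a_{(1)}S(a_{(3)})\otimes a_{(2)}\otimes a_{(4)}S(a_{(6)})\otimes a_{(5)},
\]
an element of $\A^{\otimes 4}$ expressed through the six-fold coproduct. The flip $\id\otimes\tau\otimes\id$ interchanges the second and third tensor legs, and the subsequent $m\otimes\id\otimes\id$ multiplies the now-adjacent first two legs, producing
\[
a_{(1)}S(a_{(3)})a_{(4)}S(a_{(6)})\otimes a_{(2)}\otimes a_{(5)}.
\]
The crucial step is to contract the adjacent factor $S(a_{(3)})a_{(4)}$: viewing legs $(3)$ and $(4)$ as the two halves of a single leg of a five-fold coproduct and invoking \eqref{antipode_property} in the form $S(x_{(1)})x_{(2)}=\e(x)\1$, this collapses to $\e$ applied to that leg; the counit property \eqref{counit_property} then removes it and renumbers, leaving the four-fold expression
\[
a_{(1)}S(a_{(4)})\otimes a_{(2)}\otimes a_{(3)}.
\]

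On the right-hand side, $\ad(a)=a_{(1)}S(a_{(3)})\otimes a_{(2)}$, and applying $\id\otimes\Delta$ splits the middle leg, which by coassociativity upgrades the three-fold coproduct to a four-fold one and yields precisely $a_{(1)}S(a_{(4)})\otimes a_{(2)}\otimes a_{(3)}$. Since both sides reduce to the same four-fold tensor, the identity follows. The only real difficulty here is bookkeeping: one must keep track of which of the six legs is acted on by $\tau$ and by $m$, and then carry out the antipode contraction and the counit renumbering in a consistent order. Once the legs are indexed carefully, every step is a direct application of the Hopf-algebra axioms and no further structure is needed.
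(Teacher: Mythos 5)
Your proposal is correct and follows essentially the same route as the paper's proof: both compute the left-hand side in Sweedler notation as $a_{(1)}S(a_{(3)})a_{(4)}S(a_{(6)})\otimes a_{(2)}\otimes a_{(5)}$, collapse $S(a_{(3)})a_{(4)}$ via the antipode axiom, and then use the counit property to renumber down to $a_{(1)}S(a_{(4)})\otimes a_{(2)}\otimes a_{(3)}$, which is visibly $({\rm id}\otimes\Delta)\circ{\rm ad}(a)$. The leg bookkeeping you describe is exactly what the paper does.
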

\begin{proof}
Using Sweedler notation, we get
\begin{eqnarray*}
\lefteqn{(m\otimes {\rm id}\otimes{\rm id}) \circ ({\rm id}\otimes \tau\otimes{\rm id})\circ ({\rm ad}\otimes{\rm ad})\circ \Delta(a) =}\\
&=& a_{(1)}S(a_{(3)})a_{(4)}S(a_{(6)})\otimes a_{(2)}\otimes a_{(5)} \\
&=& a_{(1)}\varepsilon(a_{(3)})\mathbf{1}_{\mathcal{A}}S(a_{(5)})\otimes
a_{(2)}\otimes a_{(4)}
\end{eqnarray*}
for $a\in\mathcal{A}$, where we used the antipode property \eqref{antipode_property}. After further
simplification, using the counit property  \eqref{counit_property}, we get
\[
=  a_{(1)}S(a_{(4)})\otimes
a_{(2)}\otimes a_{(3)} = ({\rm id}\otimes \Delta)\circ{\rm ad}(a).
\]
\end{proof}

\begin{lemma} \label{lemma_ad_invariance}
Let $\phi\in \mathcal{A}'$. Then $\phi$ is ${\rm ad}$-invariant if and only if
$L_\phi$ is ${\rm ad}$-invariant.
\end{lemma}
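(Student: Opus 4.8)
The plan is to work entirely at the level of the defining identities for ${\rm ad}$-invariance, rewritten through the factorisation $L_\phi=({\rm id}\otimes\phi)\circ\Delta$. Two elementary reformulations drive everything. First, since $L_\phi=({\rm id}\otimes\phi)\circ\Delta$, one has $({\rm id}\otimes L_\phi)\circ{\rm ad}=({\rm id}\otimes{\rm id}\otimes\phi)\circ({\rm id}\otimes\Delta)\circ{\rm ad}$. Second, because ${\rm ad}\circ({\rm id}\otimes\phi)=({\rm id}\otimes{\rm id}\otimes\phi)\circ({\rm ad}\otimes{\rm id})$ (both send $x\otimes y$ to ${\rm ad}(x)\phi(y)$), one gets ${\rm ad}\circ L_\phi=({\rm id}\otimes{\rm id}\otimes\phi)\circ({\rm ad}\otimes{\rm id})\circ\Delta$. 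Thus ${\rm ad}$-invariance of $L_\phi$ is exactly the equality of these two composite maps, and the task reduces to matching that equality with the scalar condition $a_{(1)}S(a_{(3)})\phi(a_{(2)})=\phi(a)\1$ that defines ${\rm ad}$-invariance of $\phi$.

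For the implication ``$L_\phi$ ${\rm ad}$-invariant $\Rightarrow$ $\phi$ ${\rm ad}$-invariant'' I would simply apply ${\rm id}\otimes\varepsilon$ to the defining identity $({\rm id}\otimes L_\phi)\circ{\rm ad}={\rm ad}\circ L_\phi$. On the right, the antipode axiom \eqref{antipode_property} gives $({\rm id}\otimes\varepsilon)\circ{\rm ad}=\varepsilon(\cdot)\1$, so the right-hand side collapses to $\varepsilon(L_\phi(a))\1=\phi(a)\1$, using $\varepsilon\circ L_\phi=\phi$. On the left, evaluating $\varepsilon$ in the second leg yields $a_{(1)}S(a_{(3)})\varepsilon(L_\phi(a_{(2)}))=a_{(1)}S(a_{(3)})\phi(a_{(2)})$. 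Comparing the two sides produces precisely the ${\rm ad}$-invariance of $\phi$.

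For the converse, which is the substantial direction, the key is Lemma \ref{lemma_adjoint}: it lets me replace $({\rm id}\otimes\Delta)\circ{\rm ad}$ by ${\rm ad}^\otimes\circ\Delta$ in the first reformulation, so that $({\rm id}\otimes L_\phi)\circ{\rm ad}=({\rm id}\otimes{\rm id}\otimes\phi)\circ{\rm ad}^\otimes\circ\Delta$. The crux is then to show that, \emph{when $\phi$ is ${\rm ad}$-invariant}, one has $({\rm id}\otimes{\rm id}\otimes\phi)\circ{\rm ad}^\otimes=({\rm id}\otimes{\rm id}\otimes\phi)\circ({\rm ad}\otimes{\rm id})$. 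Expanding ${\rm ad}^\otimes(x\otimes y)=x_{(1)}S(x_{(3)})y_{(1)}S(y_{(3)})\otimes x_{(2)}\otimes y_{(2)}$ and applying $\phi$ in the last leg, the $y$-factor becomes $y_{(1)}S(y_{(3)})\phi(y_{(2)})$, which the ${\rm ad}$-invariance of $\phi$ turns into the scalar $\phi(y)\1$; what remains is precisely $\phi(y)\,{\rm ad}(x)$. Feeding this through $\Delta$ then gives $({\rm id}\otimes L_\phi)\circ{\rm ad}=({\rm id}\otimes{\rm id}\otimes\phi)\circ({\rm ad}\otimes{\rm id})\circ\Delta={\rm ad}\circ L_\phi$, as required.

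The main obstacle is this converse, and specifically the Sweedler bookkeeping needed to present the two composite maps as instances of ${\rm ad}^\otimes$ and ${\rm ad}\otimes{\rm id}$ so that Lemma \ref{lemma_adjoint} can be invoked; once the compositions are lined up, the ${\rm ad}$-invariance of $\phi$ is used exactly once, in the collapse of the $y$-leg via the antipode identity. As a sanity check one could instead route through the centrality characterisation of the preceding Proposition, deducing invariance of $L_\phi$ from $\phi\star\psi=\psi\star\phi$ for all $\psi\in\mathcal{A}'$, but the direct computation above seems the most economical.
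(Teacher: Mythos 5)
Your proof is correct and follows essentially the same route as the paper: the forward direction rests on Lemma \ref{lemma_adjoint} together with the collapse of the extra leg via the ${\rm ad}$-invariance of $\phi$, and the converse is obtained by applying ${\rm id}\otimes\varepsilon$ to the invariance identity for $L_\phi$. The only (immaterial) difference is that the paper applies ${\rm id}\otimes\varepsilon$ to the identity after the Lemma-\ref{lemma_adjoint} rewriting, whereas you apply it directly to the raw identity, which is marginally more economical.
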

\begin{proof}
Let us observe that
\begin{eqnarray*}
\lefteqn{ ({\rm id}\otimes L_\phi)\circ {\rm ad} (a)
=(\id \otimes \id \otimes \phi ) (a_{(1)} S(a_{(4)}) \otimes a_{(2)}\otimes a_{(3)})} \\
&=& (\id \otimes \id \otimes \phi ) (a_{(1)} S(a_{(3)}) a_{(4)} S(a_{(6)})\otimes a_{(2)}\otimes a_{(5)})  \quad  \mbox{(cf. Lemma \ref{lemma_adjoint})}\\
&=& a_{(1)} S(a_{(3)}) \; a_{(4)} S(a_{(6)})\phi(a_{(5)})\otimes a_{(2)}.
\end{eqnarray*}
If we assume that $\phi$ is ad-invariant, then
$$ ({\rm id}\otimes L_\phi)\circ {\rm ad} (a) = a_{(1)} S(a_{(3)}) \phi(a_{(4)})\otimes a_{(2)}
= \phi(a_{(2)}) \ad( a_{(1)})= \ad \circ L_\phi (a).$$

On the other hand, if we suppose that $L_\phi$ is ad-invariant, then the application of $(\id \otimes \e)$ to both sides of the equation
$$\phi(a_{(4)})  a_{(1)} S(a_{(3)})\otimes a_{(2)}= a_{(1)} S(a_{(3)}) \, a_{(4)} S(a_{(6)})\phi(a_{(5)}) \otimes a_{(2)}$$
gives the ad-invariance of $\phi$.
\end{proof}

We can use the Haar state to produce $\ad$-invariant functionals.

\begin{proposition}\label{prop-adh}
Denote by $\ad_h\in \mathcal{L}(\mathcal{A})$ the linear map given by
$$\ad_h=(h\otimes{\rm id})\circ \ad.$$
Then $\phi_{\ad} :=\phi\circ \ad_h$ is
$\ad$-invariant for all $\phi\in\mathcal{A}'$.
\end{proposition}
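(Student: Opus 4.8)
The plan is to recast $\phi_{\ad}$ in a form that interacts transparently with the coaction axioms, rather than to attack the defining identity by brute-force Sweedler bookkeeping. Since $\ad_h = (h\otimes\id)\circ\ad$, applying $\phi$ to the surviving leg gives at once
\[
\phi_{\ad} = \phi\circ\ad_h = (h\otimes\phi)\circ\ad,
\]
so that $\phi_{\ad}(a) = h\big(a_{(1)}S(a_{(3)})\big)\,\phi(a_{(2)})$ in Sweedler notation. This reformulation is really the only idea in the argument; everything afterwards is a controlled use of the fact that $\ad$ is a left coaction.

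Next I would substitute this expression into the left-hand side of the $\ad$-invariance condition. Writing $\phi_{\ad}=(h\otimes\phi)\circ\ad$ on the inner copy and distributing the functionals over the tensor legs, one obtains
\[
(\id\otimes\phi_{\ad})\circ\ad = (\id\otimes h\otimes\phi)\circ(\id\otimes\ad)\circ\ad.
\]
At this point I would invoke the coaction property recorded just before the definition of $\ad$-invariance, namely $(\id\otimes\ad)\circ\ad=(\Delta\otimes\id)\circ\ad$, turning the right-hand side into $(\id\otimes h\otimes\phi)\circ(\Delta\otimes\id)\circ\ad$.

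Finally I would collapse the middle leg with the right-invariance of the Haar state from Proposition \ref{prop-haar}, in the form $u_{(1)}\,h(u_{(2)})=h(u)\,\mathbf{1}_{\mathcal{A}}$. Applied to the two factors produced by $\Delta$ acting on the first leg of $\ad(a)$, this converts $(\id\otimes h\otimes\phi)\circ(\Delta\otimes\id)\circ\ad$ into $\big[(h\otimes\phi)\circ\ad\big]\,\mathbf{1}_{\mathcal{A}} = \phi_{\ad}\,\mathbf{1}_{\mathcal{A}}$, which is exactly the $\ad$-invariance of $\phi_{\ad}$. I do not expect a serious obstacle here: the proposition is essentially a formal consequence of the coaction structure together with one invariance of $h$. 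The only point that needs genuine care is keeping the tensor legs straight, so that the Haar state is applied to the factor adjacent to the one split off by $\Delta$, and so that one uses the \emph{right}-invariance of $h$ rather than the left-invariance.
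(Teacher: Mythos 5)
Your proof is correct and follows essentially the same route as the paper's: both rewrite $\phi_{\ad}=(h\otimes\phi)\circ\ad$, pass through the intermediate expression $(\id\otimes h\otimes\phi)\circ(\Delta\otimes\id)\circ\ad$ via the coaction identity, and collapse it with the invariance of the Haar state. The only cosmetic difference is that you run the chain of equalities from $(\id\otimes\phi_{\ad})\circ\ad$ toward $\phi_{\ad}\,\mathbf{1}$ and cite the coaction axiom explicitly, whereas the paper starts from $\phi_{\ad}(a)\mathbf{1}$ and performs the equivalent Sweedler renumbering directly.
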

\begin{proof}
Observe that by definition we have $\phi_{\ad}=\phi \circ \ad_h=(h\otimes \phi)\circ \ad$. Using the invariance of the Haar measure (Proposition \ref{prop-haar}) we check that
\begin{eqnarray*}
 \phi_{\ad} (a) \1 &=& h\big(a_{(1)} S(a_{(3)})\big) \phi(a_{(2)}) \1
= a_{(1)} S(a_{(5)}) h\big (a_{(2)} S(a_{(4)})\big) \phi( a_{(3)}) \\
&=& a_{(1)} S(a_{(3)}) \phi_{\ad} (a_{(2)}) = (\id \otimes \phi_{\ad}) \circ \ad(a).
\end{eqnarray*}
\end{proof}

Let us collect the basic properties of  $\ad_h$.
\begin{proposition} \label{ad_h_properties}
\begin{itemize}
\item[(a)]
$\ad_h\circ \ad_h=\ad_h$.
\item[(b)]
$(\phi\circ \ad_h)^\star = \phi^\star \circ \ad_h$ for all
$\phi\in\mathcal{A}'$.
\item[(c)]
A linear functional $\phi\in\mathcal{A}'$ is $\ad$-invariant if and only
if $\phi= \phi\circ\ad_h$.
\end{itemize}
\end{proposition}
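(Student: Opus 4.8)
The plan is to prove the three parts by direct computation in Sweedler notation, relying on the explicit formula $\ad(a) = a_{(1)}S(a_{(3)})\otimes a_{(2)}$ together with the Hopf-algebra axioms (counit and antipode properties \eqref{counit_property}, \eqref{antipode_property}), the invariance and faithfulness of the Haar state, and the already-proven Proposition \ref{prop-adh}. For part (a), I would expand $\ad_h \circ \ad_h(a)$ by substituting $\ad_h(a) = h(a_{(1)}S(a_{(3)}))a_{(2)}$ into a second application of $\ad_h$; the outer Haar state then acts on the adjoint-action legs of the inner output. The key move will be to use left-invariance of $h$ (Proposition \ref{prop-haar}) to collapse the nested Haar states down to a single one, recovering $\ad_h(a)$. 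This is essentially the same idempotency computation that underlies the fact that $\ad_h$ is a conditional expectation onto the ad-invariant part.

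For part (b), I would compute $(\phi\circ\ad_h)^\star$ directly from the definition $\psi^\star(a) = \overline{\psi(S(a)^*)}$. Writing $\psi = \phi\circ\ad_h = (h\otimes\phi)\circ\ad$, I would apply $S$ and then $*$ to the argument and push these through the adjoint action, using that $S$ is an anti-coalgebra map (so $\Delta\circ S = (S\otimes S)\circ\tau\circ\Delta$), that $S(a^*)^* = a$, and that $h$ is $\star$-invariant, i.e.\ $h^\star = h$. The goal is to reorganize the Sweedler indices so that the $\star$-involution lands entirely on $\phi$ while the $\ad_h$ factor is reproduced intact, yielding $\phi^\star\circ\ad_h$.

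Part (c) is the shortest: the ``if'' direction is immediate, since if $\phi = \phi\circ\ad_h$ then $\phi$ is ad-invariant by Proposition \ref{prop-adh} (which shows every $\phi\circ\ad_h$ is ad-invariant). For the ``only if'' direction, I would assume $\phi$ is ad-invariant, meaning $(\id\otimes\phi)\circ\ad(a) = \phi(a)\1$, and apply the Haar state $h$ to the first tensor leg. Since $\phi\circ\ad_h = (h\otimes\phi)\circ\ad = h\circ\big((\id\otimes\phi)\circ\ad\big)$, ad-invariance turns the right-hand side into $h(\phi(a)\1) = \phi(a)$, giving $\phi\circ\ad_h = \phi$.

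I expect part (b) to be the main obstacle, since it requires carefully tracking how the antipode and the $*$-operation interact with the three-fold coproduct appearing in $\ad$; the anti-multiplicativity of $S$ and the order-reversal of the Sweedler legs under $\tau\circ\Delta\circ S$ must be applied correctly, and one must verify that the Haar-state factor really does remain the unaltered $\ad_h$ piece after the dust settles. Parts (a) and (c) are routine applications of Haar invariance and Proposition \ref{prop-adh}.
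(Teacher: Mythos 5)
Your proposal is correct and follows essentially the same route as the paper: part (a) by collapsing the nested Haar states via invariance (equivalently $h\star h=h$), part (b) by reducing to the identity $\big[\ad_h(S(a)^*)\big]^*=S(\ad_h(a))$ using anti-comultiplicativity of $S$ and hermiticity of $h$ so that the involution lands on $\phi$ alone, and part (c) by applying $h$ to the first leg and citing Proposition \ref{prop-adh} for the converse. No gaps.
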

\begin{proof}
Ad (a). Explicit calculations give
$$\ad_h\circ \ad_h (a)= h\big (h[ a_{(1)}S(a_{(5)})] a_{(2)}S(a_{(4)})\big ) a_{(3)}. $$
Apply the invariance of the Haar measure (Proposition \ref{prop-haar}) to the
element under the Haar state, and after the appropriate renumbering, we get
$$\ad_h\circ \ad_h (a)= h\big ( h(a_{(1)}S(a_{(3)}))\1 \big ) a_{(2)} = \ad_h (a). $$

Ad (b). Recall that $\phi^\star=\phi^{\#} \circ S$, where $S$ is the antipode and $\phi^{\#}(a)=\overline{\phi(a^*)}$. Then the assertion will follow if we show that
$$ [\ad_h \circ S(a)^*]^*= S\circ \ad_h (a).$$
Using the properties that $S\circ * \circ S \circ *=\id$ and $\Delta(S(a))=\tau \circ (S\otimes S) \circ \Delta (a)$ we check that
$\ad (S(a)^*) = S(a_{(3)})^* a_{(1)}^* \otimes S(a_{(2)})^*.$
Then since $h$ is hermitian, we have
\begin{eqnarray*}
[\ad_h \circ S(a)^*]^*&=& [(h\otimes \id) \circ (\ad \circ S)(a)^*]^*
= \overline{h\big (S(a_{(3)})^* a_{(1)}^* \big )} S(a_{(2)}) \\
&=& h\big (a_{(1)} S(a_{(3)}) \big ) S(a_{(2)}) = S (\ad_h (a)).
\end{eqnarray*}

Ad (c). First we check that for an ad-invariant functional $\phi$ we have $\phi=
\phi\circ{\rm ad}_h$:
$$\phi\circ{\rm ad}_h (a)= \phi \circ (h \otimes \id) \circ{\rm ad} (a)
= h \circ (\id \otimes \phi) \circ{\rm ad} (a)= h(\phi(a)\1) =\phi(a). $$
The converse follows immediately from Proposition \ref{prop-adh}.
\end{proof}

 Applying Lemma \ref{lemma_ad_invariance} and Corollary
\ref{cor_adinvariance_algebra}, we get an analogue of Theorem
\ref{thm_symmetries_on_Markov} for $\ad$-invariance.
\begin{corollary}
 Let $(T_t)_{t\ge 0}$ be the Markov semigroup of a L\'evy process on
$\mathcal{A}$ with generating functional $\phi$. The following three conditions are equivalent:
\begin{itemize}
 \item[($a1$)] $\phi$ is $\ad$-invariant.
 \item[($a2$)] $L_\phi$ is $\ad$-invariant.
 \item[($a3$)] for each $t\geq 0$, $T_t$ is $\ad$-invariant.
\end{itemize}
\end{corollary}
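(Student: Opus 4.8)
The plan is to prove a chain of equivalences $(a1)\Leftrightarrow(a2)\Leftrightarrow(a3)$ by reusing the structural results already established for $\ad$-invariance, exactly in parallel to the proof of Theorem~\ref{thm_symmetries_on_Markov}.

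First I would establish $(a1)\Leftrightarrow(a2)$: this is immediate from Lemma~\ref{lemma_ad_invariance}, which states precisely that a functional $\phi\in\mathcal{A}'$ is $\ad$-invariant if and only if the convolution operator $L_\phi$ is $\ad$-invariant. No further work is needed here.

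Next I would handle $(a1)\Leftrightarrow(a3)$ by exploiting the semigroup structure $T_t=\exp_\star t\phi$, equivalently $T_t=(\id\otimes\varphi_t)\circ\Delta$ with $\varphi_t=\exp_\star t\phi$. The key observation is that $\ad$-invariance of functionals is preserved under convolution: by Corollary~\ref{cor_adinvariance_algebra} the $\ad$-invariant functionals form a unital subalgebra of $\mathcal{A}'$ under the convolution product. Hence if $\phi$ is $\ad$-invariant, every convolution power $\phi^{\star n}$ is $\ad$-invariant, and so is each finite partial sum $\sum_{k=0}^n \frac{t^k}{k!}\phi^{\star k}$; since $\ad_h$ acts on the finite-dimensional blocks $V_s$ and the exponential series $\varphi_t$ terminates on each $V_s$, one concludes $\varphi_t=\varphi_t\circ\ad_h$ for every $t$, i.e.\ each $\varphi_t$ is $\ad$-invariant (using Proposition~\ref{ad_h_properties}(c)). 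Applying Lemma~\ref{lemma_ad_invariance} to each $\varphi_t$ then gives that each $L_{\varphi_t}=T_t$ is $\ad$-invariant, which is $(a3)$. Conversely, if each $T_t=L_{\varphi_t}$ is $\ad$-invariant, then each $\varphi_t$ is $\ad$-invariant by Lemma~\ref{lemma_ad_invariance}, and differentiating at $t=0$ (i.e.\ using $\phi=\lim_{t\to 0}\frac{1}{t}(\varphi_t-\e)$, which is legitimate on each finite-dimensional $V_s$ where everything is a finite-dimensional matrix computation) recovers $\ad$-invariance of $\phi$, since $\ad_h$ is a fixed linear map and $\e$ is already $\ad$-invariant.

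The only genuinely delicate point will be justifying the passage between the functional level and the semigroup level in the infinitesimal limit. I expect the main obstacle to be making the differentiation argument clean: one must note that everything restricts to the finite-dimensional invariant subspaces $V_s$ (by Lemma~\ref{lem_trans_invariant}, since $\ad_h$ and each $T_t$ are translation invariant), so that $T_t|_{V_s}$ is a norm-continuous matrix semigroup whose generator is $L_\phi|_{V_s}$; on each such block the equivalence of $\ad$-invariance for the generator and for the semigroup is a routine finite-dimensional statement. Once this reduction to finite dimensions is in place, the argument is a direct transcription of the KMS/GNS case treated in Theorem~\ref{thm_symmetries_on_Markov}, with Corollary~\ref{cor_adinvariance_algebra} playing the role that closure under convolution powers played there.
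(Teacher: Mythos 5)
Your proposal is correct and follows exactly the route the paper intends: the paper gives no written proof but derives the corollary by citing Lemma \ref{lemma_ad_invariance} for the equivalence $(a1)\Leftrightarrow(a2)$ and Corollary \ref{cor_adinvariance_algebra} (closure of $\ad$-invariant functionals under convolution) to transfer the property to $\varphi_t=\exp_\star t\phi$ and back, in parallel with the proof of Theorem \ref{thm_symmetries_on_Markov}. Your extra care about reducing the differentiation at $t=0$ to the finite-dimensional blocks $V_s$ is a sound way to make that implicit step rigorous.
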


In the next proposition we show that $\ad$-invariance of functionals
can be characterized by the form of their characteristic matrices.

\begin{proposition} \label{prop_adinv_form}
A functional $\phi$ is \ad-invariant if and only if its characteristic matrices
$\big(\phi(u_{jk}^{(s)})\big)_{1\le j,k\le n_s}$ are multiples of the identity
matrix for all $s\in\mathcal{I}$, i.e.\ if there exist complex numbers $c_s$,
$s\in\mathcal{I}$, such that  $\phi(u_{jk}^{(s)})=c_s\delta_{jk}$ for all
$s\in\mathcal{I}$ and all $1\le j,k\le n_s$.
\end{proposition}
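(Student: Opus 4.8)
The plan is to reduce the statement to the elementary dictionary between the convolution algebra $(\mathcal{A}',\star)$ and matrices acting block-by-block on the irreducible corepresentations, and then to apply Schur's lemma. The natural starting point is the characterisation of $\ad$-invariance already established above: by condition (d) of that Proposition, $\phi$ is $\ad$-invariant if and only if $\phi$ lies in the center of $(\mathcal{A}',\star)$, i.e.\ $\phi\star\psi=\psi\star\phi$ for every $\psi\in\mathcal{A}'$. Thus it suffices to show that this centrality condition is equivalent to all characteristic matrices $\phi^{(s)}:=\big(\phi(u^{(s)}_{jk})\big)_{j,k}$ being scalar.

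First I would record how convolution acts on characteristic matrices. Because $u^{(s)}$ is a corepresentation, $\Delta(u^{(s)}_{jk})=\sum_p u^{(s)}_{jp}\otimes u^{(s)}_{pk}\in V_s\odot V_s$, so for any $\phi,\psi\in\mathcal{A}'$ one computes $(\phi\star\psi)(u^{(s)}_{jk})=\sum_p \phi(u^{(s)}_{jp})\psi(u^{(s)}_{pk})$. In other words the assignment $\psi\mapsto\psi^{(s)}$ is a linear map from $(\mathcal{A}',\star)$ to $M_{n_s}(\mathbb{C})$ carrying convolution to matrix product, $(\phi\star\psi)^{(s)}=\phi^{(s)}\psi^{(s)}$. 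Since $\Delta$ respects the block decomposition, the values of $\phi\star\psi$ on $V_s$ depend only on $\phi^{(s)},\psi^{(s)}$, and as the spaces $V_s$ span $\mathcal{A}$, the identity $\phi\star\psi=\psi\star\phi$ holds if and only if $\phi^{(s)}\psi^{(s)}=\psi^{(s)}\phi^{(s)}$ for every $s\in\mathcal{I}$.

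Second, I would observe that for each fixed $s$ the map $\psi\mapsto\psi^{(s)}$ is surjective onto $M_{n_s}(\mathbb{C})$: the matrix coefficients $\{u^{(s)}_{jk}\}$ form part of the linear basis of $\mathcal{A}$, so given any $M\in M_{n_s}(\mathbb{C})$ there is a functional $\psi\in\mathcal{A}'$ with $\psi(u^{(s)}_{jk})=M_{jk}$ and vanishing on all remaining basis elements. Combining this with the previous paragraph, $\phi$ is central exactly when $\phi^{(s)}$ commutes with every element of $M_{n_s}(\mathbb{C})$ for each $s$; by Schur's lemma the commutant of the full matrix algebra is $\mathbb{C}I$, so this forces $\phi^{(s)}=c_sI$, i.e.\ $\phi(u^{(s)}_{jk})=c_s\delta_{jk}$. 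The converse is immediate from the same dictionary, since scalar matrices commute with everything, whence $\phi$ commutes with all of $\mathcal{A}'$ and condition (d) gives $\ad$-invariance.

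The computations are all routine and there is no genuine analytic obstacle; the only points requiring a little care are bookkeeping, namely verifying that $\Delta$ respects the block decomposition $V_s\odot V_s$ (so that centrality can be tested block by block) and invoking the linear independence of the matrix coefficients for the surjectivity of $\psi\mapsto\psi^{(s)}$. As a consistency check, this picture recovers the earlier remarks that $\varepsilon$, whose characteristic matrices are identity matrices, and $h$, whose characteristic matrices vanish off the trivial corepresentation, are both $\ad$-invariant.
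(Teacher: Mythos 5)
Your proof is correct, but it follows a genuinely different route from the paper's. The paper works with the projection $\ad_h=(h\otimes{\rm id})\circ\ad$ onto $\ad$-invariant functionals: it invokes part (c) of Proposition \ref{ad_h_properties} ($\phi$ is $\ad$-invariant iff $\phi=\phi\circ\ad_h$) and then computes $\phi_{\ad}(u^{(s)}_{jk})$ explicitly via the Peter--Weyl orthogonality relations \eqref{eq-pw}, which produces the factor $\delta_{jk}$ together with an explicit formula $c_s=\frac{1}{D_s}\sum_{p,r}f_1(u^{(s)}_{rp})\phi(u^{(s)}_{pr})$ for the scalar. You instead start from part (d) of the characterisation of $\ad$-invariance (centrality in $(\mathcal{A}',\star)$), observe that $\psi\mapsto\psi^{(s)}$ is a surjective algebra homomorphism onto $M_{n_s}(\mathbb{C})$ because $\Delta(u^{(s)}_{jk})=\sum_p u^{(s)}_{jp}\otimes u^{(s)}_{pk}$ and the matrix coefficients form a linear basis of $\mathcal{A}$, and conclude by Schur's lemma. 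All the steps check out: the convolution does go over to the matrix product in the right order, surjectivity of $\psi\mapsto\psi^{(s)}$ is immediate from linear independence of the $u^{(s)}_{jk}$, and the converse is as trivial as you say. Your argument is more structural and uses only the coalgebra data (it is essentially the block decomposition $\mathcal{A}'\cong\prod_s M_{n_s}(\mathbb{C})$ underlying the dual discrete quantum group), whereas the paper's computation, while less conceptual, has the advantage of exhibiting the constant $c_s$ explicitly in terms of $\phi$ and the Woronowicz character --- an expression that is exploited later, e.g.\ in Corollary \ref{cor-bijection} and in the classification on $O_N^+$. Either proof is acceptable for the statement as formulated.
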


\begin{proof}
We use the orthogonality relation for the Haar measure \eqref{eq-pw} to show that for the \ad-invariant functional $\phi$ we have
\begin{eqnarray*}
 \phi (u_{jk}^{(s)})&=& \phi_\ad (u_{jk}^{(s)})
= \sum_{p,r=1}^n h(u_{jp}^{(s)} (u_{kr}^{(s)})^*) \phi(u_{pr}^{(s)}) = \frac{1}{D_s} \sum_{p,r=1}^n f_1(u_{rp}^{(s)}) \phi(u_{pr}^{(s)}) \cdot \delta_{jk},
\end{eqnarray*}
 and we observe that the constant $\frac{1}{D_s} \sum_{p,r=1}^n
f_1(u_{rp}^{(s)}) \phi(u_{pr}^{(s)})$ does not depend on $j$ or $k$.
Reciprocally, if $\phi$ is of this form, then we check that $\phi=\phi_\ad$ and,
by (c) in Proposition \ref{ad_h_properties}, $\phi$ is \ad-invariant.
\end{proof}

In general, the mapping ${\rm ad}_h^*:\phi \mapsto \phi_{\rm ad}$ in Proposition
\ref{prop-adh} preserves neither hermiticity nor positivity, see Example
\ref{ex_adinvariance_notherm}. But \cite[Lemma 4.1]{voigt2011} and \cite[Theorem
4.5]{bedos+murphy+tuset03} suggest that some properties of ${\rm ad}_h$ can be
improved if we replace the antipode by the twisted antipode defined by
$\widetilde{S}(a) =
f_1\star S(a)$ for $a\in\mathcal{A}$.

\begin{theorem}\label{thm-hatS}
Let $\mathbb{G}$ be a compact quantum group with dense $*$-Hopf algebra
$\mathcal{A}={\rm Pol}(\mathbb{G})$. Denote by $\widetilde{S}$ the twisted
antipode defined by $\widetilde{S}(a)=f_1\star S(a)=f_{-1}(a_{(1)}) S(a_{(2)})$
and denote by $\widetilde{\ad}$ the twisted adjoint action
$\widetilde{\ad}(a)=a_{(1)}\widetilde{S}(a_{(3)}) \otimes a_{(2)}$, $a\in\A$.
\begin{description}
\item[(a)]
The map $\widetilde{\rm ad}_h:\mathcal{A}\to \mathcal{A}$ defined by
\[
\widetilde{\rm ad}_h(a)=(h\otimes {\rm id})\circ \widetilde{\rm ad}(a) = h\big(a_{(1)}\widetilde{S}(a_{(3)})\big) a_{(2)}
\]
satisfies
\[
\widetilde{\rm ad}_h(a^*a) = (h\otimes {\rm id})\Big( \big(\widetilde{\rm ad}(a)\big)^*\, \widetilde{\rm ad}(a)\Big)
\]
for $a\in\mathcal{A}$ and therefore preserves positivity.
\item[(b)]
If $\mathbb{G}$ is of Kac-type, then we have
\[
\widetilde{\rm ad}_h\circ \widetilde{\rm ad}_h = \widetilde{\rm ad}_h.
\]
\end{description}
\end{theorem}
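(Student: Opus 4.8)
The plan is to dispose of (b) in one line and to concentrate the work on the identity in (a).

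\emph{Part (b).} By Remark~\ref{rem_kac_type}, $\mathbb{G}$ is of Kac type precisely when $f_z=\varepsilon$ for all $z\in\mathbb{C}$. In that case $\widetilde{S}(a)=f_{-1}(a_{(1)})S(a_{(2)})=\varepsilon(a_{(1)})S(a_{(2)})=S(a)$ by the counit property \eqref{counit_property}, so $\widetilde{\ad}=\ad$ and hence $\widetilde{\ad}_h=\ad_h$. The assertion $\widetilde{\ad}_h\circ\widetilde{\ad}_h=\widetilde{\ad}_h$ is then exactly the idempotency established in Proposition~\ref{ad_h_properties}(a), so (b) needs no new computation.

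\emph{Part (a).} First I would note that the right-hand side is positive for a soft reason: $\big(\widetilde{\ad}(a)\big)^*\widetilde{\ad}(a)$ is a positive element of $\mathcal{A}\odot\mathcal{A}$ and the slice map $h\otimes\id$ is completely positive because $h$ is a state, so $(h\otimes\id)\big(\widetilde{\ad}(a)^*\widetilde{\ad}(a)\big)\ge0$. Consequently the entire content of (a) --- positivity of $\widetilde{\ad}_h$ included --- is the algebraic identity $\widetilde{\ad}_h(a^*a)=(h\otimes\id)\big(\widetilde{\ad}(a)^*\widetilde{\ad}(a)\big)$. To attack it I would first record that $\widetilde{S}$ is \emph{anti-multiplicative}: since $f_{-1}$ is a character and $S$ an anti-homomorphism, a one-line Sweedler computation gives $\widetilde{S}(xy)=\widetilde{S}(y)\widetilde{S}(x)$, the scalar $f_{-1}(x_{(1)})$ being free to move past everything. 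Expanding $\Delta^{(2)}(a^*a)=\Delta^{(2)}(a)^*\Delta^{(2)}(a)$ with two independent copies of the threefold coproduct, applying $h\otimes\id$ and using anti-multiplicativity, both $\widetilde{\ad}_h(a^*a)$ and $(h\otimes\id)\big(\widetilde{\ad}(a)^*\widetilde{\ad}(a)\big)$ become sums carrying the \emph{same} output factor $a_{(2)}^*a_{(\bar2)}$; matters then reduce to moving the twisted-antipode factor indexed by $a_{(3)}$ from the right end of the Haar state to its left end, a core identity binding the first and third legs of $\Delta^{(2)}(a)$.

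This core identity is the crux. It genuinely uses the two-sided invariance of $h$ rather than a single commutation through $h$, because $\widetilde{S}$ is \emph{not} $*$-preserving: from $f_{\bar z}(a^*)=\overline{f_{-z}(a)}$ and the Hopf $*$-algebra relation $S(a^*)^*=S^{-1}(a)$ one checks that $\widetilde S(a^*)\neq\widetilde S(a)^*$ in general, so transporting the factor naively via the KMS property \eqref{KMS-state} would leave an uncancelled $\sigma_{-i}$-twist. A useful guide and consistency check is the Peter--Weyl evaluation \eqref{eq-pw}, which yields
\[
h\big(u^{(s)}_{ab}\,\widetilde S(u^{(s)}_{cd})\big)=\frac{\delta_{ad}\,\delta_{bc}}{D_s}
\]
and hence the transparent diagonal form $\widetilde{\ad}_h(u^{(s)}_{jk})=\delta_{jk}\,\chi_s/D_s$ with $\chi_s=\sum_p u^{(s)}_{pp}$; incidentally this already shows why idempotency must fail unless $D_s$ equals the classical dimension, i.e.\ in the Kac case, linking back to (b). I expect the bookkeeping of how the characters in $\widetilde S$ reassemble under the invariance of $h$ to be the main obstacle; once the identity is in hand, positivity of $\widetilde{\ad}_h$ on elements $a^*a$ is immediate.
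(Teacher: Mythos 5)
Your part (b) is exactly the paper's argument: in the Kac case $f_z=\varepsilon$, so $\widetilde{S}=S$, $\widetilde{\ad}_h=\ad_h$, and Proposition~\ref{ad_h_properties}(a) gives idempotency. For part (a), however, there is a genuine gap. You correctly record the anti-multiplicativity of $\widetilde{S}$, correctly observe that positivity follows from the identity because $h\otimes\id$ sends elements $x^*x$ of $\A\odot\A$ to positive elements, and correctly reduce everything to transporting the factor $\widetilde{S}(a_{(3),j}^*)$ from the right of the Haar state to the left --- but then you stop, calling this ``the crux'' and ``the main obstacle'' without proving it. That transport \emph{is} the theorem; nothing before it is more than routine. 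The missing step is the computation, valid for all $b\in\mathcal{A}$,
\[
\sigma_i\big(\widetilde{S}(b^*)\big)=f_{-1}\star\widetilde{S}(b^*)\star f_{-1}
= S(b^*)\star f_{-1} = \big(S^{-1}(b)\big)^*\star f_{-1}
= \big(S^{-1}(b)\star f_1\big)^* = \big(f_1\star S(b)\big)^* = \widetilde{S}(b)^*,
\]
which, combined with the KMS property \eqref{KMS-state} in the form $h(xy)=h(\sigma_i(y)x)$, yields
\[
h\Big(a_{(1),j}^*a_{(1),k}\,\widetilde{S}(a_{(3),k})\,\widetilde{S}(a_{(3),j}^*)\Big)
= h\Big(\big(a_{(1),j}\widetilde{S}(a_{(3),j})\big)^*\,a_{(1),k}\,\widetilde{S}(a_{(3),k})\Big),
\]
and hence the claimed identity after tensoring with $a_{(2),j}^*a_{(2),k}$.

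Beyond the omission, your diagnosis of the mechanism is off. You assert that transporting the factor ``naively via the KMS property would leave an uncancelled $\sigma_{-i}$-twist'' and that one must instead appeal to the two-sided invariance of $h$. In fact the paper's proof is precisely the naive KMS transport, and the resulting modular twist is cancelled \emph{exactly} rather than left over: the relation $\sigma_i\circ\widetilde{S}\circ{*}={*}\circ\widetilde{S}$ displayed above is the whole reason the antipode is twisted by $f_1$ in the first place, and two-sided invariance of $h$ plays no role at this point. Your Peter--Weyl evaluation $h\big(u^{(s)}_{ab}\widetilde{S}(u^{(s)}_{cd})\big)=\delta_{ad}\delta_{bc}/D_s$ and the resulting formula $\widetilde{\ad}_h(u^{(s)}_{jk})=\delta_{jk}\chi_s/D_s$ are correct and agree with Equation~\eqref{eq-adh}, but since the identity to be proved is sesquilinear in $a$, a check on single matrix coefficients is only a consistency test, not a proof; to complete the argument along those lines you would have to verify the identity on all pairs of coefficients, which you do not do.
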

\begin{proof}
(b) was already shown in Proposition \ref{ad_h_properties}, since in the Kac case we have $\widetilde{\rm ad}_h={\rm ad}_h$.

Let us now prove (a). Since the twisted antipode is an algebra anti-homomor\-phism, we have
\begin{eqnarray*}
\widetilde{\rm ad}(a^*a) &=& a^*_{(1),j}a_{(1),k}\widetilde{S}(a_{(3),k}) \widetilde{S}(a_{(3),j}^*) \otimes a_{(2),j}^*a_{(2),k},
\end{eqnarray*}
where we put back summation indices to distinguish the sums coming from the first and the second factor.

Therefore
\begin{eqnarray*}
(h\otimes {\rm id})\left(\widetilde{\rm ad}(a^*a)\right) &=& h \left( a^*_{(1),j} a_{(1),k} \widetilde{S}(a_{(3),k})\widetilde{S}(a^*_{(3),j})\right) a^*_{(2),j} a_{(2),k} \\
&=& h \left( \sigma_i\left(\widetilde{S}(a^*_{(3),j})\right) a^*_{(1),j} a_{(1),k} \widetilde{S}(a_{(3),k})\right) a^*_{(2),j} a_{(2),k}.
\end{eqnarray*}
Now for any $b\in\mathcal{A}$,
\begin{eqnarray*}
\sigma_i(\widetilde{S}(b^*)) &=& f_{-1}\star\widetilde{S}(b^*)\star f_{-1}
=  S(b^*)\star f_{-1} = S^{-1}(b)^*\star f_{-1} \\
&=& (S^{-1}(b)\star f_1)^* = \big(f_1\star S(b)\big)^* = \widetilde{S}(b)^*,
\end{eqnarray*}
and so we get
\begin{eqnarray*}
(h\otimes {\rm id})\left(\widetilde{\rm ad}(a^*a)\right) &=& h \left(\big(a^*_{(1),j}\widetilde{S}(a_{(3),j})\big)^* a_{(1),k} \widetilde{S}(a_{(3),k})\right) a^*_{(2),j} a_{(2),k} \\
&=&(h\otimes {\rm id})\Big( \big(\widetilde{\rm ad}(a)\big)^* \widetilde{\rm ad}(a)\Big).
\end{eqnarray*}
\end{proof}

Recall that the linear span of the characters of the irreducible unitary
corepresentations of a compact quantum group is an algebra
\begin{equation} \label{eq_central_functions}
\mathcal{A}_0= {\rm span}\left\{ \chi_s = \sum_{j=1}^{n_s} u^{(s)}_{jj}:
  s\in\mathcal{I}\right\},
\end{equation}
called the \emph{algebra of central functions} on $\mathbb{G}$.

Note that $\widetilde{\rm ad}_h(\mathcal{A})\subseteq\mathcal{A}_0$. Indeed, $\widetilde{\rm ad}_h$ acts on coefficients of irreducible unitary
corepresentations as
\begin{eqnarray}
\widetilde{\rm ad}_h(u_{jk}^{(s)}) &=& \sum_{p,q=1}^{n_s} h\Big(u_{jp}^{(s)} \widetilde{S}(u^{(s)}_{qk})\Big)u^{(s)}_{pq} \nonumber \\
&=&\sum_{p,q,\ell=1}^{n_s} h\Big(u_{jp}^{(s)} S(u^{(s)}_{\ell k})\Big)f_{-1}(u^{(s)}_{q\ell})u^{(s)}_{pq} \nonumber \\
&=&\sum_{p,q,\ell=1}^{n_s} h\Big(u_{jp}^{(s)} \big(u^{(s)}_{k\ell}\big)^*\Big)f_{-1}(u^{(s)}_{q\ell})u^{(s)}_{pq} \nonumber \\
&=&\frac{1}{D_s} \sum_{p,q,\ell=1}^{n_s} \delta_{jk}f_1(u^{(s)}_{\ell p})f_{-1}(u^{(s)}_{q\ell})u^{(s)}_{pq} \nonumber \\
&=& \frac{\delta_{jk}}{D_s} \sum_{p=1}^{n_s} u^{(s)}_{pp}
\label{eq-adh}
\end{eqnarray}

We see that we can use $\widetilde{\rm ad}_h^*:\phi \mapsto \phi\circ\widetilde{\rm ad}_h $ to produce $\ad$-invariant functionals.

\begin{corollary}\label{cor-bijection}
The linear map $\widetilde{\rm ad}_h^*:(\mathcal{A}_0)'\to \mathcal{A}'$,
$\widetilde{\rm ad}_h^*(\phi)=\phi\circ\widetilde{\rm ad}_h$, maps functionals on
$\mathcal{A}_0$ to {\rm ad}-invariant functionals on $\mathcal{A}$. It maps
states on $\mathcal{A}_0$ to states on $\mathcal{A}$.

If $\mathbb{G}$ is of Kac type, then $\widetilde{\rm ad}_h^*$ defines bijections
between states on $\mathcal{A}_0$ and {\rm ad}-invariant states on $\mathcal{A}$,
and between generating functionals on $\mathcal{A}_0$ and
{\rm ad}-invariant generating functionals on $\mathcal{A}$.
\end{corollary}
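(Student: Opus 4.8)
The plan is to verify the three assertions in turn, using the explicit action \eqref{eq-adh} of $\widetilde{\rm ad}_h$ on matrix coefficients, the positivity identity of Theorem \ref{thm-hatS}(a), and---for the Kac-type statements---the idempotency of Theorem \ref{thm-hatS}(b). First note that \eqref{eq-adh} gives $\widetilde{\rm ad}_h(\1)=\chi_0=\1$ and $\widetilde{\rm ad}_h(\mathcal{A})\subseteq\mathcal{A}_0$, so $\phi\circ\widetilde{\rm ad}_h$ is a well-defined functional on $\mathcal{A}$ for every $\phi\in(\mathcal{A}_0)'$. The $\ad$-invariance is then immediate from \eqref{eq-adh}, since
\[
(\phi\circ\widetilde{\rm ad}_h)(u^{(s)}_{jk})=\frac{\delta_{jk}}{D_s}\,\phi(\chi_s)
\]
shows that the characteristic matrices of $\phi\circ\widetilde{\rm ad}_h$ are scalar, whence Proposition \ref{prop_adinv_form} applies.

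For the state-to-state claim, unitality is $(\phi\circ\widetilde{\rm ad}_h)(\1)=\phi(\1)=1$, and positivity is where Theorem \ref{thm-hatS}(a) enters: for $a\in\mathcal{A}$, writing $\widetilde{\rm ad}(a)=\sum_i p_i\otimes q_i$ and factoring the positive-semidefinite Gram matrix $[h(p_i^*p_j)]=[\sum_k\overline{m_{ki}}m_{kj}]$ exhibits
\[
\widetilde{\rm ad}_h(a^*a)=(h\otimes\id)\big((\widetilde{\rm ad}(a))^*\widetilde{\rm ad}(a)\big)=\sum_k r_k^*r_k, \qquad r_k=\sum_i m_{ki}q_i\in\mathcal{A},
\]
so this element is positive in $\mathsf{A}$ while lying in $\mathcal{A}_0$. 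Since a state on $\mathcal{A}_0$ extends to a state on the ambient C$^*$-algebra, it takes a non-negative value on $\widetilde{\rm ad}_h(a^*a)$, and $\phi\circ\widetilde{\rm ad}_h$ is a state. This step uses no Kac hypothesis.

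For the Kac-type bijections I would first record that $D_s=n_s$ forces $\widetilde{\rm ad}_h(\chi_s)=\chi_s$ in \eqref{eq-adh}, so by Theorem \ref{thm-hatS}(b) the map $\widetilde{\rm ad}_h$ is an idempotent with range exactly $\mathcal{A}_0$, restricting to the identity there. Injectivity of $\widetilde{\rm ad}_h^*$ follows at once, as restricting $\phi\circ\widetilde{\rm ad}_h$ to $\mathcal{A}_0$ recovers $\phi$. For surjectivity onto $\ad$-invariant states, Proposition \ref{ad_h_properties}(c) (with $\widetilde{\rm ad}_h=\ad_h$ in the Kac case) gives $\psi=\psi\circ\widetilde{\rm ad}_h=\widetilde{\rm ad}_h^*(\psi|_{\mathcal{A}_0})$ for each such $\psi$, with $\psi|_{\mathcal{A}_0}$ a state on $\mathcal{A}_0$; thus $\widetilde{\rm ad}_h^*$ and restriction are mutually inverse.

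The generating-functional bijection is where I expect the real difficulty. The restriction direction is painless: if $\Phi$ is an $\ad$-invariant generating functional on $\mathcal{A}$, then $\Phi|_{\mathcal{A}_0}$ is hermitian, vanishes at $\1$, and is conditionally positive (restrict $\Phi(x^*x)\ge 0$ to $x\in\ker\e\cap\mathcal{A}_0$), and $\widetilde{\rm ad}_h^*(\Phi|_{\mathcal{A}_0})=\Phi$ as above. The forward direction requires that $\phi\circ\widetilde{\rm ad}_h$ be conditionally positive on $\mathcal{A}$; hermiticity (via the $*$-preservation of $\ad_h$ in the Kac case, Proposition \ref{ad_h_properties}(b)) and vanishing at $\1$ are routine. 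Here I would split, for $a\in\ker\e$, $\widetilde{\rm ad}_h(a^*a)=b^*b+c$ with $b=\widetilde{\rm ad}_h(a)\in\mathcal{A}_0\cap\ker\e$ (using $\e\circ\widetilde{\rm ad}_h=\e$) and $c\ge 0$ again in $\mathcal{A}_0\cap\ker\e$; then $\phi(b^*b)\ge 0$, and the crux is $\phi(c)\ge 0$. This is the main obstacle: $c$ is C$^*$-positive in $\ker\e$, but conditional positivity of $\phi$ only controls squares $y^*y$ with $y\in\mathcal{A}_0\cap\ker\e$, and a C$^*$-positive element of $\mathcal{A}_0$ need not be a finite sum of such squares inside $\mathcal{A}_0$. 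Resolving it should genuinely use the Kac structure, either by decomposing $\widetilde{\rm ad}_h(a^*a)$ into central positive squares, or by reducing to the established state bijection through the semigroup $\exp_\star\!\big(t\,\phi\circ\widetilde{\rm ad}_h\big)$, which remains $\ad$-invariant by Corollary \ref{cor_adinvariance_algebra} and whose positivity can then be read off termwise from the state correspondence.
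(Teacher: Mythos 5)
Your handling of the first two assertions and of the Kac-case state bijection is correct and follows the paper's own route: $\ad$-invariance from Equation \eqref{eq-adh} plus Proposition \ref{prop_adinv_form}, positivity from the Gram factorization implicit in Theorem \ref{thm-hatS}(a), injectivity from $\widetilde{\rm ad}_h|_{\mathcal{A}_0}={\rm id}$ in the Kac case, and surjectivity from Proposition \ref{ad_h_properties}(c). One caveat on the state-to-state step: your appeal to ``a state on $\mathcal{A}_0$ extends to a state on the ambient C$^*$-algebra'' is only valid if positivity of functionals on $\mathcal{A}_0$ is understood relative to the C$^*$-positive cone (as the paper does in Proposition \ref{prop-levy-khinchin}); a functional on the non-closed $*$-algebra $\mathcal{A}_0$ that is merely positive on sums of squares need not extend --- think of evaluation of polynomials at a point outside the relevant spectrum. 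With that reading, however, the extension is not even needed: $\widetilde{\rm ad}_h(a^*a)=(h\otimes{\rm id})\big(\widetilde{\ad}(a)^*\,\widetilde{\ad}(a)\big)$ is a C$^*$-positive element already lying in $\mathcal{A}_0$, so a C$^*$-positive $\phi$ on $\mathcal{A}_0$ is nonnegative on it directly.

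The genuine gap is the one you flag yourself: you do not prove that $\phi\circ\widetilde{\rm ad}_h$ is conditionally positive on $\mathcal{A}$ when $\phi$ is a generating functional on $\mathcal{A}_0$; you only name two candidate strategies, so the generating-functional bijection is not established in your proposal. To be fair, the paper's own proof is equally terse at exactly this point (it asserts that $\psi\circ\widetilde{\rm ad}_h$ is a generating functional, after noting $\e\circ\widetilde{\rm ad}_h=\e$ in the Kac case), but the intended resolution is the same observation as above and makes your splitting $\widetilde{\rm ad}_h(a^*a)=b^*b+c$ unnecessary: for $a\in\ker\e$ the element $\widetilde{\rm ad}_h(a^*a)$ is a C$^*$-positive element of $\mathcal{A}_0\cap\ker\e$ (positivity from Theorem \ref{thm-hatS}(a) and positivity of the slice map $h\otimes{\rm id}$; membership in $\ker\e$ from $\e\circ\widetilde{\rm ad}_h=\e$ and $\e(a^*a)=|\e(a)|^2=0$), so conditional positivity of $\phi$ on $\mathcal{A}_0$ --- in the C$^*$-cone sense that the paper actually uses when it applies this Corollary in Section \ref{sec-On} via Proposition \ref{prop-levy-khinchin} --- gives $\phi\big(\widetilde{\rm ad}_h(a^*a)\big)\ge 0$ at once. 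The difficulty you describe (a C$^*$-positive element of $\mathcal{A}_0\cap\ker\e$ need not be a sum of squares $y^*y$ with $y\in\mathcal{A}_0\cap\ker\e$) is real only under the purely algebraic reading of conditional positivity on $\mathcal{A}_0$, and under that reading neither of your proposed repairs obviously closes it; so you should either adopt the C$^*$-cone convention explicitly or supply the missing argument.
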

\begin{proof}
 It follows immediately from Equation \eqref{eq-adh} and Proposition
\ref{prop_adinv_form} that for any $\phi\in\mathcal{A}'$ the functional
$\widetilde{\rm ad}_h^*(\phi)=\phi\circ\widetilde{\rm ad}_h$ is ad-invariant,
since
\[
\phi\circ\widetilde{\rm ad}_h(u_{jk}^{(s)}) = \frac{\delta_{jk}}{D_s} \phi\left(\sum_{\ell=1}^{n_s} u^{(s)}_{\ell \ell}\right).
\]
 By Theorem \ref{thm-hatS}, $\widetilde{\rm ad}_h^*$ maps positive functionals
to positive functionals. Since we have also $\widetilde{\rm ad}(\1)=\1$ and
\[
\phi\circ \widetilde{\rm ad}_h(\1)=\phi(\1),
\]
it follows that $\widetilde{\rm ad}_h^*$ maps states on $\mathcal{A}$ onto ad-invariant states on $\mathcal{A}$.

 In the Kac case we have furthermore $\varepsilon\circ \widetilde{\rm
ad}_h=\varepsilon\circ{\rm ad}_h=\varepsilon$, so in this case $\widetilde{\rm
ad}_h$ maps the kernel of the counit onto itself and therefore $\widetilde{\rm
ad}_h^*$ maps generating functionals on $\mathcal{A}$ onto ad-invariant
generating functionals on $\mathcal{A}$.

 Conversely, any state or generating functional $\psi$ on $\mathcal{A}_0$ can be
extended to a state or generating functional $\hat{\psi}=\psi\circ\widetilde{\rm
ad}_h$ on $\mathcal{A}$. By Proposition \ref{prop_adinv_form} it is clear that
$\hat{\psi}$ is the unique ad-invariant extension of $\psi$.
\end{proof}

 In Section \ref{sec-On}, we will show that this Corollary allows to completely
characterize the ad-invariant generating functionals on the free orthogonal
quantum group $O_n^+$.

\section{Dirichlet forms}
\label{sec_dirichlet}

In this Section we determine explicitly the structure of the Dirichlet forms associated to KMS-symmetric generating functionals on compact quantum groups. In case of GNS symmetry, we also characterize the invariance under translation of generators on the algebra $\A$, in terms of an associated quadratic form on $\A$.

Recall that $L^2 (\mathsf{A},h)$ denotes the GNS Hilbert space of $(\mathsf{A},h)$ and that the cyclic vector $\xi_h =1_\mathsf{A}\in L^2 (\mathsf{A},h)$ represents the Haar state as $h(a)=\langle \xi_h ,a\xi_h\rangle$. From now on and until the end of Section \ref{sec_derivation}, we assume that the Haar state is faithful on the C$^*$-algebra $\mathsf{A}$ so that we can identify $\mathsf{A}$ with an involutive subalgebra
of the von Neumann algebra $L^\infty (\mathsf{A},h)$ of bounded operators on  $L^2 (\mathsf{A},h)$, generated by $\mathsf{A}$ by the GNS representation. As a consequence, the vector $\xi$ is cyclic
for the von Neumann algebra $L^\infty (\mathsf{A},h)$ too.

Notice that, as the Haar state $h$ is a $(\sigma ,-1)$-KMS state for the modular automorphism group $\sigma$ (see Section \ref{woronowicz_characters}), it follows, by the KMS theory (in particular \cite[Corollary 5.3.9]{bratteli+robinson97}), that the vector $\xi$ is also separating for the von Neumann algebra $L^\infty(\mathsf{A},h)$. This fact allows to apply the Tomita-Takesaki modular theory to the Haar state $h$ on the C$^*$-algebra $\mathsf{A}$ of the compact quantum group.

Recall also that the symmetric embedding is defined by
\[
i_s:\mathsf{A}\to L^2 (\mathsf{A},h), \quad i_s(a) = \Delta^{\frac{1}{4}}a\xi_h,
\]
where $\Delta$ denotes (exceptionally) the Tomita-Takesaki modular operator. This definition agrees with the one from Section \ref{sec_gns_kms} (page \pageref{symmetric_embedding}), since for $a\in \A$ we have
\[
i_s(a) = \Delta^{\frac{1}{4}}a\Delta^{-\frac{1}{4}}\xi_h = \sigma_{-\frac{i}{4}} (a)\xi_h .
\]

For a given KMS-symmetric generating functional $\phi$ of a L\'evy process on $\A$, and the related convolution operator $L_\phi (a) = \phi \star a$, we define the sesquilinear and the quadratic forms
\begin{eqnarray*}
\E_\phi \big(i_s(a),i_s(b) \big) &=& \big\langle i_s(a),i_s(-L_\phi (b)) \big\rangle
= -h \big( \sigma_{-\frac{i}{4}} (a)^* (\sigma_{-\frac{i}{4}} \circ L_\phi)(b)\big), \\
\E_\phi[i_s(a)]&=& \E_\phi \big( i_s(a),i_s(a) \big),
\end{eqnarray*}
on the domain
\[
D(\E_\phi)=\{i_s(a)\in L^2(\mathsf{A},h): a\in D(L_\phi) \mbox{ and } \E_\phi [i_s(a)]<\infty\}\, .
\]

The explicit values of the sesquilinear form $\E_\phi$ on the basis of the coefficients of the unitary corepresentations are the following
\begin{eqnarray*}
\E_\phi \big(i_s(u^{(s)}_{jk}),i_s(u^{(t)}_{lm}) \big) &=& \big\langle i_s(u^{(s)}_{jk}),i_s(-L_\phi (u^{(t)}_{lm})) \big\rangle
=\sum_r \big\langle i_s(u^{(s)}_{jk}),i_s(u^{(t)}_{lr})\big\rangle\phi (u^{(t)}_{rm})\\
&=& \sum_r h\big( \sigma_{-\frac{i}{4}} (u^{(s)}_{jk})^* \sigma_{-\frac{i}{4}} (u^{(t)}_{lr}) \big) \phi (u^{(t)}_{rm})\\
&=& \sum_{r,p,p'} h\big( (u^{(s)}_{jk})^* u^{(t)}_{pp'} \big) f_{\frac12}(u^{(t)}_{lp}) f_{\frac12}(u^{(t)}_{p'r}) \phi (u^{(t)}_{rm})\\
&=&\frac{\delta_{st}}{D_s} \sum_{p} f_{-1}(u^{(t)}_{pj})  \big) f_{\frac12}(u^{(t)}_{lp}) \sum_{r} f_{\frac12}(u^{(t)}_{kr}) \phi (u^{(t)}_{rm}) \\
&=&\frac{\delta_{st}}{D_s} f_{-\frac12}(u^{(t)}_{lj}) (f_{\frac12}\star \phi)(u^{(t)}_{km}).
\end{eqnarray*}

Since $\sigma_z$ leaves the subspaces $V_s$ invariant, hence
\[
i_s (V_s)=\sigma_{-\frac{i}{4}}(V_s)\xi_h =V_s\xi_h = i_h (V_s)\,
\]
and the operator defined by
\[
H_\phi i_s(a):=i_s(-L_\phi a),\qquad a\in D(H_\phi):= i_s(\A) \subset L^2(\mathsf{A},h)
\]
leaves invariant the subspaces
\[
E_s=V_s \xi_h = {\rm Span}\{u^{(s)}_{jk}\xi_h :j,k=1, \cdots ,n_s\}\subset L^2 (\mathsf{A},h),\quad s\in\mathcal{I}\, .
\]
Therefore, since $L^2 (A,h)=\bigoplus_{s\in\mathcal{I}} E_s$, the operator $H_\phi$ decomposes as
\[
H_\phi =\bigoplus_{s\in\mathcal{I}} H_\phi^s
\]
a direct sum of its restrictions $H_\phi^s$ on each finite dimensional subspace $E_s$.

\begin{theorem}
Let $\phi$ be a KMS-symmetric generating functional of a L\'evy process on $\A$. Then the operator $H_\phi$ is essentially self-adjoint, the quadratic form $\E_\phi$ is closable and its closure is a Dirichlet form.
\end{theorem}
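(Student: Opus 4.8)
The plan is to treat the three assertions in turn, exploiting two structural facts: that $H_\phi$ is block diagonal with finite-dimensional blocks, and that the semigroup $e^{-tH_\phi}$ is nothing but the $L^2$-implementation, through the symmetric embedding $i_s$, of the quantum Markov semigroup $(T_t)_{t\ge 0}$ of the process. First I would establish essential self-adjointness. Since $\phi$ is KMS-symmetric, Corollary \ref{cor_symmetries} gives $\phi^\#\circ R=\phi$, equivalently $L_\phi$ is $i_s$-symmetric, so the form $\E_\phi$ is symmetric and $H_\phi$ is a symmetric operator on the dense domain $i_s(\A)=\bigoplus_{s\in\mathcal I}^{\mathrm{alg}} E_s$. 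As recorded before the statement, $H_\phi$ decomposes as $\bigoplus_s H_\phi^s$ with each $H_\phi^s$ acting on the finite-dimensional space $E_s$; being symmetric and bounded on a finite-dimensional space, each $H_\phi^s$ is self-adjoint. A densely defined symmetric operator that is the orthogonal direct sum of self-adjoint operators on the summands of an orthogonal decomposition $L^2(\mathsf A,h)=\bigoplus_s E_s$, and is defined on the algebraic direct sum of those summands, is essentially self-adjoint, its closure being $\bigoplus_s H_\phi^s$ on the natural maximal domain. This gives the first claim.

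Next come positivity and closability. The identity $H_\phi i_s(a)=i_s(-L_\phi a)$ together with $T_t=\exp(tL_\phi)$ yields, on each finite-dimensional block and hence on all of $i_s(\A)$,
\[
e^{-t\overline{H_\phi}}\,i_s(a)=i_s(T_t a),\qquad a\in\A .
\]
By Theorem \ref{extension-to-reduced} each $T_t$ is a unital completely positive contraction, and since $\phi$ is KMS-symmetric, Theorem \ref{thm_symmetries_on_Markov} shows that $(T_t)_{t\ge0}$ is KMS-symmetric; therefore its implementation on the symmetric embedding is a self-adjoint contraction semigroup on $L^2(\mathsf A,h)$ (by the standard $L^p$-interpolation of KMS-symmetric Markov maps). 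Consequently $\overline{H_\phi}\ge 0$. A nonnegative self-adjoint operator defines a closed nonnegative quadratic form $\xi\mapsto\|\overline{H_\phi}{}^{1/2}\xi\|^2$; since $i_s(\A)$ is an operator core for $\overline{H_\phi}$ it is a form core, and as $\E_\phi$ agrees with this form on $i_s(\A)$, it is closable, its closure being the form of $\overline{H_\phi}$.

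Finally, the Dirichlet property. Because $\xi_h$ is cyclic and separating for $L^\infty(\mathsf A,h)$, the Tomita--Takesaki machinery and the self-dual (Haagerup) cone $\mathcal P\subset L^2(\mathsf A,h)$ are at our disposal, and the symmetric embedding $i_s(a)=\Delta^{\frac14}a\xi_h$ is exactly the one adapted to the KMS-symmetry of the Haar state. The closed form $\E_\phi$ is the form of the self-adjoint contraction semigroup $e^{-t\overline{H_\phi}}$, which by the displayed identity is the $L^2$-implementation of the KMS-symmetric, unital, completely positive semigroup $(T_t)$. Invoking the correspondence between KMS-symmetric Markovian semigroups and (completely) Dirichlet forms on a standard form of a von Neumann algebra (cf.\ \cite{goldstein+lindsay95}, \cite{cipriani97}, \cite{cipriani08}), the Markovianity of $(T_t)$ translates precisely into the Dirichlet property of $\E_\phi$: unitality $T_t\xi_h=\xi_h$ gives $\xi_h\in\ker\overline{H_\phi}$ (consistent with $L_\phi\1=0$), while complete positivity and KMS-symmetry give that $e^{-t\overline{H_\phi}}$ preserves $\mathcal P$ and is sub-Markovian with respect to the order unit $\xi_h$, i.e.\ the unit contraction operates.

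The hard part is this last step: one must check that the abstract correspondence genuinely applies, which requires extending $(T_t)$ to a \emph{normal} Markovian semigroup on the von Neumann algebra $L^\infty(\mathsf A,h)$ (starting from Theorem \ref{extension-to-reduced} and the faithfulness of $h$) and verifying that its symmetric-embedding implementation preserves the self-dual cone with $\xi_h$ as dominating unit. The delicate point is that $h$ need not be tracial, so the positivity structure on $L^2(\mathsf A,h)$ is governed by the Haagerup cone rather than by operator positivity, and it is precisely KMS-symmetry -- not the stronger GNS-symmetry -- that makes the symmetric embedding compatible with that cone. All the remaining verifications are otherwise routine, facilitated by the finite-dimensional block decomposition $H_\phi=\bigoplus_s H_\phi^s$.
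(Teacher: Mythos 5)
Your proposal is correct and follows essentially the same route as the paper: essential self-adjointness from the orthogonal decomposition $H_\phi=\bigoplus_s H_\phi^s$ into bounded symmetric operators on the finite-dimensional blocks $E_s$, closability of $\E_\phi$ as the form of the resulting self-adjoint operator, and the Dirichlet property via the correspondence (due to Cipriani, and Goldstein--Lindsay) between KMS-symmetric Markovian semigroups on the C$^*$-algebra and Dirichlet forms on $L^2(\mathsf{A},h)$. You are somewhat more explicit than the paper about establishing $\overline{H_\phi}\ge 0$ through the identity $e^{-t\overline{H_\phi}}i_s(a)=i_s(T_t a)$ and about the cone-theoretic content of the final step, but these are elaborations of the same argument rather than a different one.
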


\begin{proof}
The operator $H_\phi$ is a direct sum of bounded operators and is symmetric as $L_\phi$ is KMS symmetric. It follows that $H_\phi$ is essentially self-adjoint and its closure is given by
\[
D(\overline{H_\phi})=\{\xi =\oplus_{s\in \mathcal{I}}\, \xi_s\in L^2 (A,h): \sum_{s\in \mathcal{I}} \|H_\phi\xi_s\|^2 <+\infty\}\, .
\]
\[
\overline{H_\phi}(\oplus_{s\in \mathcal{I}}\, \xi_s)=\oplus_{s\in \mathcal{I}}\, H_\phi\xi_s\, ,\qquad \oplus_{s\in \mathcal{I}}\, \xi_s\in D(\overline{H_\phi})\, .
\]
As, by definition, $\E_\phi [\xi]=\langle\xi ,H_\phi\xi\rangle$ for $\xi\in D(H_\phi)$, we have that $\E_\phi$ is closable and its closure is given by
\[
D(\overline{\E_\phi})=\{\xi =\oplus_{s\in \mathcal{I}}\, \xi_s\in L^2 (A,h): \sum_{s\in \mathcal{I}} \langle \xi_s , H_\phi\xi_s\rangle <+\infty\}\, .
\]
\[
\overline{\E_\phi}[\oplus_{s\in \mathcal{I}}\, \xi_s]= \sum_{s\in \mathcal{I}} \langle \xi_s , H_\phi\xi_s\rangle\, ,\qquad \oplus_{s\in \mathcal{I}}\, \xi_s\in D(\overline{\E_\phi})\, .
\]

Now, the quantum Markov semigroup $T_t$ on the C$^*$-algebra $\mathsf{A}$, generated by $L_\phi$, is KMS symmetric, i.e. is $(\sigma ,-1)$-KMS symmetric in the sense of Definition 2.1 in\cite{cipriani98} (see also Definition 2.31 in \cite{cipriani08}). By Theorem 2.3 and Theorem 2.4 in \cite{cipriani98} (see also Theorem 2.39 and Theorem 2.44 in \cite{cipriani08}) the semigroup $e^{-t{\overline H_\phi}}$ on $L^2 (\mathsf{A},h)$ is Markovian so that the quadratic form $\overline{\E_\phi}$ is a Dirichlet form by Theorem 4.11 in \cite{cipriani97} (see also Theorem 2.52 in \cite{cipriani08}).
\end{proof}

\begin{remark} \label{rem_dirichlet_gns}
Using the embedding $i_h :\mathsf{A}\to L^2 (\mathsf{A},h)$, we can identify the Dirichlet form on $L^2 (\mathsf{A},h)$, associated to a KMS-symmetric generating
functional $\phi$, with the following quadratic form on the C$^*$-algebra $\mathsf{A}$
\[
\Q_\phi[a]=\E_\phi[i_h(a)]= - h \big( a^* (\sigma_{-\frac{i}{4}} \circ L_\phi
\circ \sigma_{\frac{i}{4}} )(b)\big)
\]
defined on ${\it dom}\, (\Q_\phi):=\{a\in A: i_h(a)\in {\it dom}\, (\E_\phi)\}$.
If furthermore, $\phi$ is GNS-symmetric, then $L_\phi$ commutes with the modular group $(\sigma_z)_z$ (Prop. \ref{prop-modgr} and Cor. \ref{gns_stronger_kms}) and one has
\[
\Q_\phi[a]= - h \big( a^* L_\phi (a)\big).
\]
\end{remark}

The next theorem shows that the Dirichlet forms
associated to GNS-symmetric L\'evy processes admit an additional invariance.

\begin{theorem}
Let $L$ be a GNS symmetric operator on $\A\subset L^2(\mathsf{A},h)$. Then the following conditions are equivalent:
\begin{enumerate}
\item There exists a functional $\phi\in\mathcal{A}'$ such that $L=L_\phi$, where $L_\phi=(\id\otimes \phi)\circ\Delta$;
\item $L$ is translation invariant on $\mathcal{A}$;
\item The semigroup $(T_t)_{t\ge 0}$ on $\mathcal{A}$ (or $\mathsf{A}_r$ or
$\mathsf{A}_u$) associated to $L$ by the formula $T_t|_{\mathcal{A}} =
\exp_{\star} tL$ is translation invariant.
\item the sesquilinear form $\mathcal{\Q}$ defined by $\Q (a,b) = -h(a^*L(b))$ on $\mathcal{A}$ satisfies
\begin{equation} \label{eq_dirichlet_inv}
 \Q (a,b){\bf 1} = (m_* \otimes \Q) (\Delta(a),\Delta(b)), \quad a,b \in \mathcal{A},
\end{equation}
where $m_*$ denotes the sesquilinear map obtained from the multiplication, namely,
$m_*(a, b)= a^*b$.
\end{enumerate}
\end{theorem}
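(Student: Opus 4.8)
The plan is to establish $(1)\Leftrightarrow(2)\Leftrightarrow(3)$, then $(2)\Rightarrow(4)$, and finally to close the loop with $(4)\Rightarrow(1)$; the GNS-symmetry of $L$ serves only to make $\Q$ a genuine Hermitian sesquilinear form and is not otherwise needed for the implications. The equivalence $(1)\Leftrightarrow(2)$ is precisely the characterisation of convolution operators recalled in Section~\ref{processes}: $L=(\id\otimes\phi)\circ\Delta$ for some $\phi\in\A'$ if and only if $\Delta\circ L=(\id\otimes L)\circ\Delta$, and then $\phi=\e\circ L$. For $(2)\Rightarrow(3)$ I would use that $\phi\mapsto L_\phi$ is a homomorphism for the convolution product ($L_\phi\circ L_\psi=L_{\phi\star\psi}$, $L_\e=\id$), so that $T_t=\exp(tL_\phi)=L_{\exp_\star t\phi}=L_{\varphi_t}$ is again a convolution operator, hence translation invariant by $(2)\Rightarrow(1)$, the identity passing to $\mathsf{A}_r$ and $\mathsf{A}_u$ by density wherever $T_t$ extends continuously. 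For $(3)\Rightarrow(2)$, each $T_t$, being translation invariant, is a convolution operator by $(2)\Rightarrow(1)$ and therefore preserves every finite-dimensional space $V_s$; since $L=\frac{d}{dt}\big|_{t=0}T_t$ on $\A$, differentiating $\Delta\circ T_t=(\id\otimes T_t)\circ\Delta$ on each (finite-dimensional, hence $T_t$- and $L$-stable) $V_s$ at $t=0$ yields $\Delta\circ L=(\id\otimes L)\circ\Delta$.

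For $(2)\Rightarrow(4)$ the computation is short. Writing out the right-hand side of \eqref{eq_dirichlet_inv} in Sweedler notation,
\[
(m_*\otimes\Q)(\Delta a,\Delta b)=-\,a_{(1)}^*b_{(1)}\,h\big(a_{(2)}^*\,L(b_{(2)})\big)=-(\id\otimes h)\big(\Delta(a^*)\,(\id\otimes L)\Delta(b)\big),
\]
and translation invariance replaces $(\id\otimes L)\Delta(b)$ by $\Delta(L(b))$, so that, since $\Delta$ is multiplicative, the expression becomes $-(\id\otimes h)\Delta(a^*L(b))=-h(a^*L(b))\1=\Q(a,b)\1$ by right-invariance of the Haar state.

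The real work is $(4)\Rightarrow(1)$, and here I would argue in coordinates using the orthogonality relations \eqref{eq-pw}. Evaluating \eqref{eq_dirichlet_inv} on $a=u^{(s)}_{ij}$, $b=u^{(t)}_{kl}$ and then applying the Haar state $h$ to the resulting identity in $\A$, the left-hand side collapses via $h\big((u^{(s)}_{ip})^*u^{(t)}_{kq}\big)=\delta_{st}\delta_{pq}f_{-1}(u^{(s)}_{ki})/D_s$ to a single term, giving
\[
\Q(u^{(s)}_{ij},u^{(t)}_{kl})=0\ (s\neq t),\qquad \Q(u^{(s)}_{ij},u^{(s)}_{kl})=\frac{f_{-1}(u^{(s)}_{ki})}{D_s}\,Q^{(s)}_{jl},\quad Q^{(s)}_{jl}:=\sum_{p}\Q(u^{(s)}_{pj},u^{(s)}_{pl}).
\]
Expanding $L(u^{(t)}_{kl})=\sum_{s,i,j}c^{(s)}_{ij}\,u^{(s)}_{ij}$ and using $h\big((u^{(s)}_{ij})^*L(u^{(t)}_{kl})\big)=-\Q(u^{(s)}_{ij},u^{(t)}_{kl})$ together with the second relation in \eqref{eq-pw}, one obtains, for fixed indices, a linear system for the $c$'s whose coefficient matrix is $F^{(s)}=\big(f_{-1}(u^{(s)}_{ii'})\big)$. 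Invertibility of $F^{(s)}$ then forces $c^{(s)}_{ij}=0$ for $s\neq t$ (so $L(V_t)\subseteq V_t$) and $c^{(t)}_{ij}=-\delta_{ik}Q^{(t)}_{jl}$, that is
\[
L(u^{(t)}_{kl})=\sum_{j}u^{(t)}_{kj}\,\big(-Q^{(t)}_{jl}\big)=L_\phi(u^{(t)}_{kl}),\qquad \phi(u^{(t)}_{jl}):=-Q^{(t)}_{jl},
\]
which is exactly $L=L_\phi$ and establishes $(1)$.

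The main obstacle is thus entirely in $(4)\Rightarrow(1)$, and is not conceptual but bookkeeping: one must keep the two forms of the orthogonality relation \eqref{eq-pw} straight (which index pair carries $\delta_{st}$, which carries the Kronecker delta between repetition indices, and where $f_{-1}$ sits) and correctly invert the matrices $F^{(s)}$ to recover the $c$'s. Once the linear system is set up with the right conventions, the convolution form of $L$ is forced. I would note, finally, that only the $h$-image of \eqref{eq_dirichlet_inv} on the corepresentation basis is used in this direction, so in fact this weaker consequence of $(4)$ already implies $(1)$, while $(2)\Rightarrow(4)$ supplies the converse at full strength, closing the circle $(4)\Rightarrow(1)\Leftrightarrow(2)\Leftrightarrow(3)\Rightarrow(4)$.
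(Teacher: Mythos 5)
Your proposal is correct, and the implications $(1)\Leftrightarrow(2)$, $(2)\Leftrightarrow(3)$ and $(2)\Rightarrow(4)$ coincide with the paper's treatment (the paper simply cites Theorem \ref{prop-trans-inv} for $(2)\Leftrightarrow(3)$ and performs the same Sweedler computation for $(2)\Rightarrow(4)$). Where you genuinely diverge is the closing implication. The paper proves $(4)\Rightarrow(2)$ directly and coordinate-free: it tests $(\id\otimes L)\circ\Delta$ and $\Delta\circ L$ against vectors of the form $(a^*\otimes\1)\Delta(b^*)$ under $h\otimes h$, shows both pairings equal $-h(a^*)\Q(b,c)$ using \eqref{eq_dirichlet_inv} and the invariance of $h$, and concludes from the fact that $(\A\otimes 1)\Delta(\A)$ spans $\A\odot\A$ and $h\otimes h$ is faithful there. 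You instead prove $(4)\Rightarrow(1)$ by applying $h$ to \eqref{eq_dirichlet_inv} on the Peter--Weyl basis, solving the resulting linear system via the invertibility of $F^{(s)}=\big(f_{-1}(u^{(s)}_{jk})\big)$ (which the paper records in the proof of Proposition \ref{prop-modgr}), and reading off $L=L_\phi$ with the explicit formula $\phi(u^{(t)}_{jl})=-\sum_p\Q(u^{(t)}_{pj},u^{(t)}_{pl})$. I checked the bookkeeping: with the conventions of \eqref{eq-pw} your collapse of the right-hand side to $\delta_{st}\,f_{-1}(u^{(s)}_{ki})D_s^{-1}Q^{(s)}_{jl}$ and the subsequent identification $c^{(t)}_{i'j'}=-\delta_{i'k}Q^{(t)}_{j'l}$ are right. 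The paper's route is shorter and avoids corepresentation theory entirely; yours is more computational but buys two things the paper does not state: an explicit expression for the generating functional in terms of the form $\Q$, and the observation that the a priori weaker condition obtained by applying $h$ to \eqref{eq_dirichlet_inv} on basis elements already suffices to recover $(1)$.
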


\begin{proof}
We already observed in Subsection \ref{processes} the equivalence $(1) \Leftrightarrow (2)$ whereas the equivalence $(2)\Leftrightarrow (3)$ follows from Theorem \ref{prop-trans-inv}, so that we need to prove only $(1) \Leftrightarrow (4)$. Let us assume that $L$ satisfies
$$ (\id \otimes L) \circ \Delta =\Delta \circ L.$$
Then, using Sweedler notation and the invariance of the Haar state,
\begin{eqnarray*}
\lefteqn{(m_* \otimes \Q) (\Delta(a),\Delta(b)) = a_{(1)}^* b_{(1)} \Q (a_{(2)},b_{(2)})}\\
&=& - a_{(1)}^* b_{(1)} h \big( a_{(2)}^*L(b_{(2)}) \big)
= - (\id \otimes h) \big( (a_{(1)}^*\otimes a_{(2)}^*) (\id \otimes L) \Delta(b) \big)\\
&=& - (\id \otimes h) \big( (a_{(1)}^*\otimes a_{(2)}^*) (L(b)_{(1)} \otimes L(b)_{(2)}) \big)\\
&=& - (\id \otimes h) \Delta \big( a^* L(b) \big) = - h (a^*L(b)) {\bf 1} = \Q(a,b) {\bf 1}.
\end{eqnarray*}

On the other hand, if we assume that Equation \ref{eq_dirichlet_inv} holds, then
\begin{eqnarray*}
\lefteqn{(h\otimes h) (a^*\otimes {\bf 1})\Delta (b^*) \big ( (\id \otimes L) \Delta(c) \big)}\\
&=& (h \otimes h) \big( a^*b_{(1)}^*c_{(1)} \otimes b^*_{(2)}L(c_{(2)}) \big)
= -h \big( a^*b_{(1)}^*c_{(1)}\big) \Q \big( b_{(2)},c_{(2)} \big)\\
&=& -h \big( a^* (m_* \otimes \Q) (\Delta(b),\Delta(c)) \big) = -h(a^*) \Q (b,c)
\end{eqnarray*}
and
\begin{eqnarray*}
\lefteqn{(h\otimes h) (a^*\otimes {\bf 1})\Delta (b^*) (\Delta \circ L) (c) }\\
&=& (h \otimes h) \big( a^*b_{(1)}^*(Lc)_{(1)} \otimes b^*_{(2)}(Lc)_{(2)} \big)
= h \big( a^* (\id \otimes h) \Delta(b^*L(c))\big)\\
&=& h ( a^*) h(b^*L(c))  = - h(a^*) \Q (b,c).
\end{eqnarray*}
Since $\A\odot\A$ is the linear span of $(\A \otimes 1 ) \Delta(\A)$ and $h\otimes h$ is faithful on $\A\odot\A$, we conclude that $L$ is translation invariant.
\end{proof}

\begin{corollary}
Let $\phi\in\A'$ be the generating functional of a GNS-symmetric L\'evy process and $\E_\phi$ the associated Dirichlet form. Then the sesquilinear form $\Q$ on $\A$ defined by
\[
\Q (a,b):=\E_\phi (i_h (a), i_h (b))\qquad a,b\in\A
\]
satisfies Eq. $(\ref{eq_dirichlet_inv})$. Conversely, let $L$ is a GNS-symmetric operator on $\A$ such that $L(\1)=0$, $L$ is hermitian and positive on ${\rm ker}\,\varepsilon$, and the sesquilinear form on $\A$ defined by
\[
\Q (a,b):=-h(a^* Lb)\qquad a,b\in\A
\]
satisfies Eq. $(\ref{eq_dirichlet_inv})$. Then $L=L_\phi$ for a generating functional $\phi$ of a GNS-symmetric L\'evy process.
\end{corollary}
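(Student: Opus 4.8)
The plan is to recognise this Corollary as the specialisation to the GNS-symmetric case of the preceding theorem (the one establishing the equivalence of conditions (1)--(4)). Almost all of the work has already been done there; the only genuine content here is (i) in the direct part, identifying the sesquilinear form $\Q(a,b)=\E_\phi(i_h(a),i_h(b))$ coming from the Dirichlet form with the form $-h(a^*L(b))$ that appears in condition (4), and (ii) in the converse, checking that the functional extracted from condition (1) really is the generating functional of a GNS-symmetric L\'evy process.

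For the direct implication I would first invoke Remark \ref{rem_dirichlet_gns}: since $\phi$ is GNS-symmetric, $L_\phi$ commutes with the modular group (Proposition \ref{prop-modgr} together with Corollary \ref{gns_stronger_kms}), so $\sigma_{-\frac{i}{4}}\circ L_\phi\circ\sigma_{\frac{i}{4}}=L_\phi$ and the symmetric embedding collapses onto $i_h$, giving $\E_\phi[i_h(a)]=-h(a^*L_\phi(a))$. By polarisation — both sides being sesquilinear, linear in the right entry as in our standing convention, and agreeing on the diagonal — this upgrades to
\[
\Q(a,b)=\E_\phi(i_h(a),i_h(b))=-h(a^*L_\phi(b)),\qquad a,b\in\A .
\]
Now $L_\phi=(\id\otimes\phi)\circ\Delta$ is a convolution operator and is GNS-symmetric because $\phi$ is (Proposition \ref{prop-self-adj} / Corollary \ref{cor_symmetries}(1)), so the preceding theorem applies to it; its implication (1)$\Rightarrow$(4) then says precisely that $\Q$ satisfies \eqref{eq_dirichlet_inv}.

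For the converse I would observe that the hypothesis that $\Q(a,b)=-h(a^*L(b))$ satisfies \eqref{eq_dirichlet_inv} is verbatim condition (4) of the preceding theorem for the given GNS-symmetric operator $L$; its implication (4)$\Rightarrow$(1) produces $\phi\in\A'$ with $L=L_\phi$, recovered as $\phi=\varepsilon\circ L$ (Section \ref{processes}). It then remains to read off that $\phi$ is a generating functional of a GNS-symmetric L\'evy process, and here the three remaining hypotheses on $L$ are exactly the operator-level translations of the defining properties of such a functional: from $L(\1)=0$ we get $\phi(\1)=\varepsilon(L\1)=0$; from hermiticity of $L$ and the fact that $\varepsilon$ is a $*$-homomorphism we get $\phi(a^*)=\varepsilon\big((La)^*\big)=\overline{\phi(a)}$, so $\phi$ is hermitian; and positivity of $L$ on $\ker\varepsilon$ (i.e.\ $\varepsilon\big(L(a^*a)\big)\ge 0$ for $a\in\ker\varepsilon$) gives conditional positivity. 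By the Schoenberg correspondence $\phi$ is then the generating functional of a (unique) L\'evy process, and since $L=L_\phi$ is GNS-symmetric, Corollary \ref{cor_symmetries}(1) gives $\phi^\#\circ S=\phi$, which for hermitian $\phi$ reads $\phi\circ S=\phi$; that is, $\phi$ is GNS-symmetric.

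I expect the only delicate point to be the identification of $\Q$ with $-h(a^*L_\phi(b))$ in the direct part: it is essential that one uses GNS-symmetry and not merely KMS-symmetry, since it is exactly GNS-symmetry that makes $L_\phi$ commute with $\sigma$ and thereby removes the modular factors $\sigma_{\pm\frac{i}{4}}$ from the Dirichlet form, leaving the translation-tractable expression to which the preceding theorem can be applied. The secondary point requiring a line of care is ensuring that the stated positivity of $L$ on $\ker\varepsilon$ does deliver conditional positivity of $\varepsilon\circ L$; everything else is a routine transfer of properties across the dictionary $\phi=\varepsilon\circ L$, $L=L_\phi$ already established in Section \ref{processes} and Section \ref{sec_gns_kms}.
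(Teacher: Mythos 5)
Your argument is correct and is exactly the route the paper intends: the corollary is stated without proof as an immediate consequence of the preceding theorem, with the identification $\E_\phi(i_h(a),i_h(b))=-h(a^*L_\phi(b))$ in the GNS-symmetric case supplied by Remark \ref{rem_dirichlet_gns} (via Proposition \ref{prop-modgr} and Corollary \ref{gns_stronger_kms}), and the transfer of the hypotheses on $L$ to the Schoenberg conditions on $\phi=\varepsilon\circ L$ being routine. Your polarisation step and the verification that GNS-symmetry of $L_\phi$ together with hermiticity of $\phi$ yields $\phi\circ S=\phi$ are both sound.
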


\section{Derivations, cocycles and Spectral Triples}
\label{sec_derivation}
In this section we associate to any L\'evy process on a CQG $\mathbb{G}=(\mathsf{A},\Delta)$, a natural derivation on its Hopf $^*$-subalgebra $\A$, with values in a Hilbert bimodule over the C$^*$-algebra $\mathsf{A}=C(\mathbb{G})$.
This gives rise, on the same bimodule, to a self-adjoint operator $D$, with respect to which we prove that the elements of $\A$ are "Lipschitz" in a natural, suitable sense. The construction makes essential use of the Sch\"{u}rmann triple associated to the generator of the process.

In case the GNS symmetry holds true, we will show that the derivation is,
essentially, a differential square root of the generator $H_\phi$. Moreover, if
the spectrum of $H_\phi$ on $L^2 (A,h)$ is discrete, then the Hilbert bimodule
and the operator $D$ form a spectral triple in the sense of the noncommutative
geometry of A. Connes \cite{connes94}. This fact suggests to refer to $D$ as the
{\it Dirac operator} associated to the process.

We remark that the role of GNS symmetry of the process is to provide a suitable closability property of the derivation, needed to prove that the Dirac operator $D$ is self-adjoint and that the spectrum of the {\it Dirac Laplacian} $D^2$ coincides with that of the generator $H_\phi$, away from zero.

We will show in Section 9 that in case the CQG is a compact Lie group and the L\'evy process is the Brownian motion associated to a given Riemannian metric, the differential calculus illustrated above reduces to the familiar one: the derivation coincides with the gradient operator and the Lipschitz property has the usual meaning.

\medskip
Consider on the Hopf $^*$-subalgebra $\A$ of a compact quantum group $\mathbb{G}=(\mathsf{A},\Delta)$, the generating functional $\phi\in\A^\prime$ of a L\'evy process and its associated Sch\"{u}rmann triple $((\pi, H_\pi ), \eta, \phi)$ on a Hilbert space $H_\pi$ (see Remark \ref{rem_bounded_reps}).

Denote by $\lambda_{L} , \lambda_{R} :\mathsf{A}\rightarrow B(L^2 (\mathsf{A},h))$ the left and right actions of $\mathsf{A}$ on the Hilbert space $L^2 (A,h)$
\[
\begin{split}
&\lambda_L (a)(b\xi_h):=ab\xi_h ,\\
&\lambda_R (a)(b\xi_h ):=ba\xi_h,\qquad a,b\in \mathsf{A},
\end{split}
\]
where $\xi_h\in L^2 (\mathsf{A},h)$ denotes the cyclic vector representing the Haar state. Recall now that $\Delta :\mathsf{A}\rightarrow \mathsf{A}\otimes \mathsf{A}$ is a morphism of C$^*$-algebras and $\lambda_L$, $\pi$ are representations of the C$^*$-algebra $\mathsf{A}$, so that $\lambda_L\otimes \pi$ is a representation of the C$^*$-algebra $\mathsf{A}\otimes \mathsf{A}$.
Correspondingly, consider the left and right actions $\rho_{L} ,\rho_R :\mathsf{A}\rightarrow B(L^2 (\mathsf{A},h)\otimes H_\pi)$ of $\mathsf{A}$ on the Hilbert space $L^2 (\mathsf{A},h)\otimes H_\pi$ defined by
\[
\begin{split}
&\rho_L :=(\lambda_L\otimes \pi )\circ\Delta\\
&\rho_R :=\lambda_R\otimes {\rm id}_{H_\pi}
\end{split}
\]
or, more explicitly, by
\begin{eqnarray*}
\rho_L (a)(b\xi_h\otimes v)&=& ((\lambda_L \otimes \pi)\circ\Delta (a) ) (b\xi_h\otimes v) = \sum a_{(1)}b\xi_h\otimes \pi(a_{(2)})v \\
\rho_R (a)(b\xi_h\otimes v)&=& \lambda_R (a)(b\xi_h )\otimes v=ba\xi_h \otimes v\, ,
\end{eqnarray*}
for $a,b\in \mathsf{A}$ and $v\in H_\pi$. The actions $\lambda_L\, ,\rho_L$ are continuous and form representations of the C$^*$-algebra $A$. Likewise, also the actions $\lambda_R\, ,\rho_R$ are continuous and form antirepresentations of the C$^*$-algebra $A$ or representations of the opposite C$^*$-algebra $A^{\rm op}$. Moreover, as $\lambda_L\, ,\lambda_R$ (resp. $\rho_L\, ,\rho_R$) commute, they provide a $A$-bimodule structure on the Hilbert space $L^2 (A,h)$ (resp. $L^2 (\mathsf{A},h)\otimes H_\pi$).
\vskip0.2truecm\noindent
In the following we shall adopt the simplified notations: for $a\in \mathsf{A}$ and $\xi\in L^2 (\mathsf{A},h)\otimes H_\pi$ we write
\begin{eqnarray*}
a\cdot \xi &:=\rho_L (a)\xi \, , \\
\xi\cdot a &:=\rho_R (a)\xi \, .
\end{eqnarray*}
Recall that we denote by $i_h :\mathsf{A}\rightarrow L^2 (\mathsf{A},h)$ the GNS embedding (cf. page \pageref{gns_embedding})
\[
i_h (a):=a\xi_h\qquad a\in \mathsf{A}.
\]

\begin{proposition}
Consider on the Hopf $^*$-subalgebra $\A$ of a compact quantum group $\mathbb{G}=(\mathsf{A},\Delta)$, the generating functional $\phi\in\A^\prime$ of a L\'evy process, its associated Sch\"{u}rmann triple $((\pi, H_\pi ), \eta, \phi)$ and the induced $\mathsf{A}$-bimodule structure on $L^2 (\mathsf{A},h)\otimes H_\pi$. Then the linear map defined by
\[
\partial :\A\rightarrow L^2 (\mathsf{A},h)\otimes H_\pi\qquad \partial :=(i_h\otimes \eta )\circ\Delta
\]
or, more explicitly, by
\[
\partial a = (i_h\otimes \eta )(\Delta a)=\sum a_{(1)}\xi_h \otimes \eta(a_{(2)})\qquad a\in\A\, ,
\]
is a derivation in the sense that it satisfies the Leibniz rule
\[
\partial (ab)=(\partial a)\cdot b + a\cdot (\partial b)\qquad a,b\in\A\, .
\]
\end{proposition}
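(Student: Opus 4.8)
The plan is to verify the Leibniz rule by a direct computation in Sweedler notation, relying on three ingredients already available: the multiplicativity of the coproduct $\Delta$, the cocycle identity for $\eta$ coming from the Sch\"urmann triple, and the explicit form of the bimodule actions $\rho_L$ and $\rho_R$ on $L^2(\mathsf{A},h)\otimes H_\pi$. First I would expand $\partial(ab)$. Since $\Delta$ is a $*$-homomorphism, $\Delta(ab)=\Delta(a)\Delta(b)=\sum a_{(1)}b_{(1)}\otimes a_{(2)}b_{(2)}$, and therefore
\[
\partial(ab) = \sum a_{(1)}b_{(1)}\,\xi_h \otimes \eta\big(a_{(2)}b_{(2)}\big).
\]

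Next I would apply the cocycle property $\eta(xy)=\pi(x)\eta(y)+\eta(x)\e(y)$ to the factor $\eta(a_{(2)}b_{(2)})$. This splits $\partial(ab)$ into the two sums
\[
\sum a_{(1)}b_{(1)}\,\xi_h \otimes \pi(a_{(2)})\eta(b_{(2)}) \quad\text{and}\quad \sum a_{(1)}b_{(1)}\,\e(b_{(2)})\,\xi_h \otimes \eta(a_{(2)}).
\]
The first sum is exactly $a\cdot(\partial b)=\rho_L(a)(\partial b)$: indeed, since $\rho_L=(\lambda_L\otimes\pi)\circ\Delta$, one has $\rho_L(a)\big(\sum b_{(1)}\xi_h\otimes\eta(b_{(2)})\big)=\sum a_{(1)}b_{(1)}\xi_h\otimes\pi(a_{(2)})\eta(b_{(2)})$. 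For the second sum I would collapse the inner leg of $b$ using the counit property $\sum b_{(1)}\e(b_{(2)})=b$, obtaining $\sum a_{(1)}b\,\xi_h\otimes\eta(a_{(2)})$, which is precisely $(\partial a)\cdot b=\rho_R(b)(\partial a)$, because $\rho_R=\lambda_R\otimes\id$ acts by right multiplication on the first leg only. Adding the two identifications yields $\partial(ab)=(\partial a)\cdot b+a\cdot(\partial b)$.

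The computation is essentially mechanical, so I do not expect a genuine obstacle; the only point requiring care is the bookkeeping of the coproduct legs. The conceptual content is the observation that the twist by $\pi$ built into the left action $\rho_L=(\lambda_L\otimes\pi)\circ\Delta$ is exactly what absorbs the factor $\pi(a_{(2)})$ produced by the cocycle identity, while the untwisted right action $\rho_R$ matches the $\e$-term after applying the counit. In other words, the $\mathsf{A}$-bimodule structure on $L^2(\mathsf{A},h)\otimes H_\pi$ has been set up precisely so that $\partial=(i_h\otimes\eta)\circ\Delta$ becomes a derivation, and the proof merely makes this compatibility explicit.
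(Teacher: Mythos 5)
Your proof is correct and follows essentially the same route as the paper: expand $\Delta(ab)=\Delta(a)\Delta(b)$, apply the cocycle identity to split $\eta(a_{(2)}b_{(2)})$, recognize the $\pi$-term as $\rho_L(a)(\partial b)$ and collapse the $\e$-term via the counit property to get $\rho_R(b)(\partial a)$. The only detail the paper adds is the remark that $\partial$ is well defined because $\Delta(\A)\subseteq\A\otimes\A$.
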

\begin{proof}
The map is well defined because $\Delta (\A)\subseteq \A\otimes\A$ (where, forcing notation a little bit, we denoted by $\A\otimes\A$ the image in $\mathsf{A}\otimes \mathsf{A}$ of the subspace $\A\odot\A\subseteq \mathsf{A}\odot \mathsf{A}$ under the canonical quotient map from $\mathsf{A}\odot \mathsf{A}$ to $\mathsf{A}\otimes \mathsf{A}$).
\par\noindent
As the map $\eta :\A\rightarrow H_\pi$ is a 1-cocycle and the counit $\e :\A\rightarrow \mathbb{C}$ satisfies the identity $({\rm id}\otimes\e)\circ\Delta = {\rm id}$, i.e. $\sum b_{(1)}\e (b_{(2)})=b$, we have
\begin{eqnarray*}
\partial (ab) &=& (i_h \otimes \eta)(\Delta(ab))= (i_h \otimes \eta)( \Delta(a)\Delta(b))\\
&=& \sum_{j,k} a_{(1),j}\, b_{(1),k}\xi_h \otimes \eta(a_{(2),j}\,b_{(2),k}) \\
&=& \sum_{j,k} a_{(1),j}\, b_{(1),k}\xi_h \otimes [\pi(a_{(2),j})\eta(b_{(2),k})+\eta(a_{(2),j})\e(b_{(2),k})] \\
&=& \left( \sum_j \lambda_L (a_{(1),j}) \,\otimes \pi(a_{(2),j})\right)  \left(\sum_k b_{(1),k}\xi_h \otimes \eta(b_{(2),k})\right) \\
& & + \left( \sum_k \lambda_R (b_{(1),k})\otimes \e(b_{(2),k}){\rm id}_{H_\pi}\right)\left( \sum_j a_{(1)_j}\xi_h \otimes \eta(a_{(2),j})\right)  \\
&=& \rho_L (a)(\partial(b)) +  \left( \lambda_R ( \sum_k b_{(1),k}\e(b_{(2),k}))\otimes {\rm id}_{H_\pi}\right)(\partial(a)) \\
&=& \rho_L (a)(\partial(b)) +  \left( \lambda_R ( b)\otimes {\rm id}_{H_\pi}\right)(\partial(a)) \\
&=& \rho_L (a)(\partial(b)) +  \rho_R (b)(\partial(a))
a\cdot\partial(b) +  \partial(a)\cdot b\, .
\end{eqnarray*}
\end{proof}

\begin{proposition} \label{prop_derivation}
Let $\phi\in\A^\prime$ be a GNS-symmetric generating functional and consider the hermitian convolution generator $L_{\phi} :\A\rightarrow\A$, its Hilbert-space extension $(H_\phi ,D(H_\phi))$ as well as the Dirichlet form $(\E_\phi ,D(\E_\phi))$ (see Section \ref{sec_dirichlet}).
\par\noindent
Then the operator $d:D(d)\rightarrow L^2 (A,h)\otimes H_\pi$ defined as
\[
D(d):=i_h(\A)=\A\xi_h\subset L^2 (A,h)\, ,\qquad d(i_h(a)):=\partial a,\qquad a\in\A,
\]
is closable and
\[
\|d (i_h(a))\|^2_{L^2 (A,h)\otimes H_\pi} = 2\langle i_h(a),H_\phi i_h(a)\rangle_{L^2 (A,h)} = 2\, \E_\phi [i_h(a)],\quad a\in\A\, .
\]
\end{proposition}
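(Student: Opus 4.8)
The plan is to reduce the whole statement to the single algebraic identity
\[
\|\partial a\|^2_{L^2(\mathsf{A},h)\otimes H_\pi} = -2\,h\big(a^* L_\phi(a)\big),\qquad a\in\A,
\]
and then read off the two displayed equalities from the GNS-case description of the Dirichlet form. Since $\phi$ is GNS-symmetric, $L_\phi$ commutes with the modular group (Proposition \ref{prop-modgr} together with Corollary \ref{gns_stronger_kms}); hence $i_h(\A)=\sigma_{-\frac{i}{4}}(\A)\xi_h=i_s(\A)=D(H_\phi)$, and, writing $i_h(a)=i_s(\sigma_{\frac{i}{4}}a)$ and using $L_\phi\sigma=\sigma L_\phi$, one gets $H_\phi i_h(a)=i_h(-L_\phi a)=-L_\phi(a)\xi_h$. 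Thus $\langle i_h(a),H_\phi i_h(a)\rangle=-h(a^*L_\phi a)=\E_\phi[i_h(a)]$, the last equality being exactly Remark \ref{rem_dirichlet_gns}. So everything reduces to computing $\|\partial a\|^2$.

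For that computation I would first upgrade the defining relation of the Sch\"urmann triple to all of $\A$. Decomposing $a=(a-\e(a)\1)+\e(a)\1$ and using $\eta(\1)=0$ (which follows from the cocycle rule) together with $\phi(\1)=0$, the relation $\phi(xy)=\langle\eta(x^*),\eta(y)\rangle$ valid on $\ker\e$ becomes
\[
\langle \eta(a^*),\eta(b)\rangle = \phi(ab)-\e(a)\phi(b)-\phi(a)\e(b),\qquad a,b\in\A.
\]
Writing $\partial a=\sum a_{(1)}\xi_h\otimes\eta(a_{(2)})$ and recalling that $\Delta$ is a $*$-homomorphism, the squared norm expands, in Sweedler notation, as $\|\partial a\|^2=\sum h\big(a_{(1)}^* a_{(1')}\big)\,\langle \eta(a_{(2)}),\eta(a_{(2')})\rangle$, where the primed legs index a second, independent copy of $\Delta(a)$. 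Substituting the identity above splits this into three sums.

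These are then evaluated separately. The first equals $(h\otimes\phi)\Delta(a^*a)$ and \emph{vanishes}: applying $\phi$ to the left-invariance identity $(\id\otimes h)\Delta$... more precisely to $(h\otimes\id)\Delta(b)=h(b)\1$ gives $(h\otimes\phi)\Delta(b)=h(b)\phi(\1)=0$. In the other two sums the counit identity $(\id\otimes\e)\Delta=\id$ collapses one tensor leg: using $\e(a_{(2)}^*)=\overline{\e(a_{(2)})}$ the second sum becomes $h\big(a^* L_\phi(a)\big)$, and the third, after invoking hermiticity of $\phi$ so that $(\id\otimes\phi)\Delta(a^*)=L_\phi(a^*)=(L_\phi a)^*$, becomes $h\big((L_\phi a)^* a\big)$. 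At this point $\|\partial a\|^2=-h(a^*L_\phi a)-h\big((L_\phi a)^* a\big)$ \emph{with no symmetry assumed}. The GNS-symmetry enters precisely here: by the quantum detailed balance condition $(b1)\Leftrightarrow(b2')$ of Theorem \ref{thm_symmetries_on_Markov} one has $h(a^*L_\phi a)=h(L_\phi(a^*)a)=h\big((L_\phi a)^* a\big)$, so the two surviving terms coincide and the target identity follows.

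It remains to prove closability. Polarizing the norm identity (each side being the quadratic form of a sesquilinear form on $\A\times\A$) yields $\langle d\,i_h(a),d\,i_h(b)\rangle=2\langle i_h(a),H_\phi i_h(b)\rangle$ for all $a,b\in\A$. If $i_h(a_n)\to 0$ in $L^2(\mathsf{A},h)$ while $d\,i_h(a_n)\to\zeta$, then for every $b\in\A$, since $i_h(b)\in D(H_\phi)$ makes $H_\phi i_h(b)$ a fixed vector, we get $\langle\zeta,d\,i_h(b)\rangle=\lim_n 2\langle i_h(a_n),H_\phi i_h(b)\rangle=0$; hence $\zeta\perp d(i_h(\A))$. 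But $\zeta$ is itself a limit of elements of $d(i_h(\A))$ and so lies in its closed span, forcing $\zeta=0$, which is exactly closability. I expect the routine-looking Sweedler bookkeeping to be the bulk of the labour, while the genuinely load-bearing points are the vanishing of the first sum and the exact place where GNS-symmetry (detailed balance) is used to equate the two remaining terms; the closability is then a short functional-analytic step resting on the essential self-adjointness of $H_\phi$ proved in Section \ref{sec_dirichlet}.
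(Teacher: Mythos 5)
Your proof is correct and follows essentially the same route as the paper's: expand $\|\partial a\|^2$ via the Sch\"urmann-triple relation into three sums, show the first vanishes and identify the other two with $-h\big(a^*L_\phi(a)\big)$ using GNS-symmetry (detailed balance), then deduce closability from the resulting identity with the closable form $\E_\phi$. The only harmless deviations are that you dispose of the first sum by Haar invariance alone (the paper invokes GNS-symmetry there, though your argument is actually more general) and that you prove closability by a direct polarization/orthogonality argument where the paper simply cites the closability of $\E_\phi$ from Section \ref{sec_dirichlet}.
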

\begin{proof}
We have
\begin{eqnarray*}
\|d (i_h(a))\|^2 &=& \langle \sum_j a_{(1),j} \otimes \eta(a_{(2),j}), \sum_k a_{(1),k} \otimes \eta(a_{(2),k}) \rangle \\
&=& \sum_{j,k} h \big( a_{(1),j}^* a_{(1),k}\big)
 \big[\phi(a_{(2),j}^*a_{(2),k}) - \e(a_{(2),j}^*)\phi(a_{(2),k}) - \phi(a_{(2),j}^*)\e(a_{(2),k}) \big] \\
&=& \sum_{j,k} h\big( a_{(1),j}^* a_{(1),k}\big)\phi(a_{(2),j}^*a_{(2),k})
-\sum_{j,k} h\big( a_{(1),j}^* a_{(1),k}\big)\e(a_{(2),j}^*)\phi(a_{(2),k}) \\
	& & - \sum_{j,k} h\big(  a_{(1),j}^* a_{(1),k} \big) \phi(a_{(2),j}^*)\e(a_{(2),k}).
\end{eqnarray*}
The first term vanishes because, by the GNS symmetry of $L_\phi$, we have
\[
\sum_{j,k} h\big( a_{(1),j}^* a_{(1),k}\big)\phi(a_{(2),j}^*a_{(2),k}) = h\big(\1 \cdot L_\phi (a^*a)\big) = h( L_\phi(\1)\, a^*a ) = 0.
\]
The second term becomes
\begin{eqnarray*}
\lefteqn{
\sum_{j,k} h\big( a_{(1),j}^* a_{(1),k}\big)\e(a_{(2),j}^*)\phi(a_{(2),k})}\\
&=& h\big( ( \sum_{j} a_{(1),j}\e(a_{(2),j}))^* ( \sum_{k} a_{(1),k}\phi(a_{(2),k}))\big)
=h\big( a^* L_\phi (a) \big)\, .
\end{eqnarray*}
The third term is the complex conjugate of the second term and, since it is real, they are the same. Hence the identity $\|d\xi\|^2 =2\langle \xi,H_\phi\xi\rangle_{L^2 (A,h)} = 2\E_\phi [\xi]$ holds for $\xi\in D(d)$. Since $\phi$ is also KMS symmetric, Lemma 7.1 implies that the quadratic form is closable. It follows that the operator $d$ is closable, too.
\end{proof}

{\it From now on we will denote by the same symbol $\big(d, D(d)\big)$ the closure of the closable operator considered in the previous result.}

\vskip0.2truecm\noindent
One of the conclusion of the above result reads ${\overline H_\phi}=\frac{1}{2}d^*\circ d$, i.e. the $L^2$-generator has the aspect of a "generalized Laplacian" composed of a "generalized divergence" operator $d^*$ and a "generalized gradient" operator $d$. In other words, the operator $d$ (essentially the derivation $\partial$) is a differential square root of the $L^2$-generator.

\medskip
The next result shows that in the noncommutative space $C(\mathbb{G})$, the elements of the dense subalgebra $\mathcal{A}$ have a {\it noncommutative Lipschitz property}.  Below we denote by $\mathsf{A}{\widehat \otimes}H_\pi$ the projective tensor product of the C$^*$-algebra $\mathsf{A}$ and the Hilbert space $H_\pi$ which is
the completion of the algebraic tensor product of $A$ and $H_\pi$ with respect
to the norm
$$ \|x\|_{A{\widehat \otimes}H_\pi} := \inf \{\sum \|a_i\|_A \|\xi_i\|_{H_\pi},
\, \mbox{where} \,\, x= \sum a_i \otimes \xi_i\},$$
see \cite{grothendieck55} or \cite{ryan02}.

\begin{proposition} \label{prop_Dirac}
Let $\phi\in\A^\prime$ be a GNS-symmetric generating functional with associated Sch\"{u}rmann triple $((\pi, H_\pi ), \eta, \phi)$. Let us consider the Hilbert space $\mathcal{H}_\phi :=(L^2(\mathsf{A},h)\otimes H_\pi)\oplus L^2 (\mathsf{A},h)$ as a $\mathsf{A}$-bimodule under the commuting left and right actions
\[
\pi_{L} :=\rho_{L}\oplus \lambda_{L}\, ,\qquad \pi_{R} :=\rho_{R}\oplus \lambda_{R}\, .
\]
Consider also on $\mathcal{H}_\phi$ the self-adjoint operator
\[
D:=\left(
     \begin{array}{cc}
       0 & d \\
       d^* & 0 \\
     \end{array}
   \right)\, .
\]
Then the commutator $[D,\pi_L (a)]$ is bounded for all $a\in \A$ with norm bounded by
\[
\|[D,\pi_L (a)]\|\le \|\partial a\|_{\mathsf{A}{\widehat \otimes}H_\pi}.
\]
\end{proposition}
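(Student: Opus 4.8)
The plan is to exploit the off-diagonal form of the commutator and to identify its upper-right block with a right-module multiplication by the fixed vector $\partial a$. Writing a vector of $\mathcal{H}_\phi$ as a pair $(\xi,\zeta)$ with $\xi\in L^2(\mathsf{A},h)\otimes H_\pi$ and $\zeta\in L^2(\mathsf{A},h)$, one reads off directly from the matrix forms of $D$ and $\pi_L(a)=\rho_L(a)\oplus\lambda_L(a)$ that
\[
[D,\pi_L(a)](\xi,\zeta)=\big(T_a\zeta,\,S_a\xi\big),\qquad T_a:=d\,\lambda_L(a)-\rho_L(a)\,d,\quad S_a:=d^*\rho_L(a)-\lambda_L(a)\,d^*.
\]
Since the commutator is then off-diagonal, its norm equals $\max(\|T_a\|,\|S_a\|)$, so it suffices to bound each block. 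First I would treat $T_a$, which is the genuinely new computation; the block $S_a$ is then handled by a duality argument using that $D$ is self-adjoint and $\pi_L$ is a $*$-representation.

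For the block $T_a$, restricted to the dense domain $i_h(\A)$, the Leibniz rule for $\partial$ (established in the preceding proposition) does all the work. For $b\in\A$ one has $\lambda_L(a)i_h(b)=i_h(ab)$, and hence
\[
T_a\,i_h(b)=d\,i_h(ab)-\rho_L(a)\,d\,i_h(b)=\partial(ab)-a\cdot\partial b=(\partial a)\cdot b=\rho_R(b)\,\partial a .
\]
Thus $T_a$ is exactly the operator $i_h(b)\mapsto \rho_R(b)\,\partial a$, i.e.\ left creation by the fixed vector $\partial a$ composed with the right action. The quantitative step is to bound $\|T_a\|$ by the projective norm of $\partial a$. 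Fix any representation $\partial a=\sum_i a_i\xi_h\otimes v_i$ with $a_i\in\mathsf{A}$, $v_i\in H_\pi$ (equivalently an element $\sum_i a_i\otimes v_i\in\mathsf{A}\widehat{\otimes}H_\pi$ via $i_h\otimes\id$). Since $\rho_R(b)(a_i\xi_h\otimes v_i)=(a_ib)\xi_h\otimes v_i=\lambda_L(a_i)(i_h(b))\otimes v_i$, one factors $T_a=\sum_i |v_i\rangle\,\lambda_L(a_i)$, where $|v_i\rangle\colon L^2(\mathsf{A},h)\to L^2(\mathsf{A},h)\otimes H_\pi$, $\zeta\mapsto\zeta\otimes v_i$, has norm $\|v_i\|_{H_\pi}$. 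As the Haar state is faithful on $\mathsf{A}$, the GNS representation $\lambda_L$ is isometric, so $\|\lambda_L(a_i)\|=\|a_i\|_{\mathsf{A}}$; the triangle inequality then gives $\|T_a\|\le\sum_i\|a_i\|_{\mathsf{A}}\|v_i\|_{H_\pi}$, and taking the infimum over all decompositions yields $\|T_a\|\le\|\partial a\|_{\mathsf{A}\widehat{\otimes}H_\pi}$.

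It remains to control $S_a$, and this is the step where I expect the only real subtlety. Because $D=D^*$ and $\pi_L(a)^*=\pi_L(a^*)$, a short adjoint computation gives $S_a^*=-T_{a^*}$, so $S_a$ extends to a bounded operator with $\|S_a\|=\|T_{a^*}\|$; applying the previous estimate with $a^*$ in place of $a$ shows $\|S_a\|\le\|\partial(a^*)\|_{\mathsf{A}\widehat{\otimes}H_\pi}$. In particular the commutator is bounded for every $a\in\A$, which is the essential Lipschitz content. The delicate point is matching the lower block with the stated bound $\|\partial a\|$: one should verify (using $\Delta(a^*)=\Delta(a)^*$ together with the cocycle and $*$-structure of the Sch\"urmann triple) that $S_a$ is in fact also dominated by $\|\partial a\|_{\mathsf{A}\widehat{\otimes}H_\pi}$, or else symmetrize via $\|[D,\pi_L(a)]\|=\|[D,\pi_L(a^*)]\|$, which follows from $[D,\pi_L(a)]^*=-[D,\pi_L(a^*)]$. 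Reconciling the $S_a$-block estimate with the gradient bound is the main obstacle; everything else reduces to the Leibniz identity and the factorization of $T_a$ through $\lambda_L$ and the vector embeddings $|v_i\rangle$.
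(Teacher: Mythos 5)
Your proof is correct and follows essentially the same route as the paper: the same block decomposition of the commutator, the same identification $T_a\, i_h(b)=\rho_R(b)\,\partial a$ via the Leibniz rule, and the same projective-norm estimate obtained by factoring through $\lambda_L$ and the vector maps $v\mapsto \zeta\otimes v$ and then optimizing over presentations of $\partial a$. The subtlety you flag in the lower-left block is genuine rather than a defect of your argument: the paper's own computation exhibits that block as $-\big(d\circ\lambda_L(a^*)-\rho_L(a^*)\circ d\big)^*=-T_{a^*}^{\,*}$, yet its final display silently writes $-T_a^*$ in its place, so the argument as literally written only yields $\|[D,\pi_L(a)]\|=\max\big(\|T_a\|,\|T_{a^*}\|\big)\le\max\big(\|\partial a\|_{\mathsf{A}{\widehat\otimes}H_\pi},\|\partial (a^*)\|_{\mathsf{A}{\widehat\otimes}H_\pi}\big)$. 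Your proposed fixes (symmetrizing in $a\leftrightarrow a^*$, or checking directly that the lower block is also dominated by $\|\partial a\|_{\mathsf{A}{\widehat\otimes}H_\pi}$) are exactly what is needed to recover the literal constant in the statement; on this point you are more careful than the source.
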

\begin{proof}

It follows from
\begin{eqnarray*}
[D,\pi_L (a)] &=& D \circ \pi_L(a) - \pi_L(a) \circ D \\
 &=& \left ( \begin{array}{cc}
 0 & d \\ d^* & 0 \end{array} \right )
\left ( \begin{array}{cc} \rho_L(a) & 0 \\ 0 & \lambda_L(a) \end{array} \right )
- \left ( \begin{array}{cc} \rho_L(a) & 0 \\ 0 & \lambda_L(a) \end{array} \right )
\left ( \begin{array}{cc} 0 & d \\ d^* & 0 \end{array} \right ) \\
&=& \left ( \begin{array}{cc} 0 & d \circ \lambda_L (a) - \rho_L(a) \circ d \\
d^*\circ \rho_L(a) - \lambda_L(a) \circ d^* & 0 \end{array} \right ) \\
&=& \left ( \begin{array}{cc} 0 & d \circ \lambda_L (a) - \rho_L(a) \circ d \\ - \big( d \circ \lambda_L (a^*) - \rho_L(a^*) \circ d\big)^* & 0 \end{array} \right ) \\
\end{eqnarray*}
that $[D,\pi_L (a)]$ is bounded for all $a\in \A$ if and only if $d \circ \lambda_L (a) - \rho_L(a) \circ d$ is bounded for all $a\in \A$. To check that the latter is actually the case, let us observe that, for $b\in\mathcal{A}$, we have
\begin{eqnarray*}
\big( d \circ \lambda_L (a) - \rho_L(a) \circ d \big) i_h (b)
&=& d(i_h	(ab)) - \rho_L(a) (\partial b)\\
&=& \partial(ab) - \rho_L(a) (\partial b)
= \rho_R(b) (\partial a)\, .
\end{eqnarray*}
For any presentation $\partial a = \sum_{k=1}^na_k\otimes \xi_k\in \mathsf{A}\otimes H_\pi$ we then have
\begin{eqnarray*}
\| \rho_R(b) (\partial a)\|_{L^2(\mathsf{A},h)\otimes H_\pi}&=& \| (\lambda_R\otimes {\rm id}_{H_\pi}) (\partial a)\|_{L^2(\mathsf{A},h)\otimes H_\pi}\\
&=&\|\sum_{k=1}^n a_k b\xi_h \otimes \xi_k\|_{L^2(\mathsf{A},h)\otimes H_\pi}\\
&\le & \sum_{k=1}^n \|a_k b\xi_h\|_{L^2(\mathsf{A},h)} \|\xi_k\|_{ H_\pi}\\
&\leq&  \|i_h(b)\|_{L^2 (A,h)} \sum_{k=1}^n \|a_k\|_A\cdot\|\xi_k\|_{H_\pi}\, .
\end{eqnarray*}
Optimizing among all presentations $\partial a = \sum_{k=1}^na_k\otimes \xi_k\in \mathsf{A}\otimes H_\pi$ we get
\[
\| \rho_R(b) (\partial a)\|_{L^2(\mathsf{A},h)\otimes H_\pi}\le  \|i_h (b)\|_{L^2 (A,h)}\cdot  \|\partial a\|_{A{\widehat \otimes}H_\pi}\qquad a,b\in\A
\]
so that
\[
\| d \circ \lambda_L (a) - \rho_L(a) \circ d\|\le \|\partial a\|_{A{\widehat \otimes}H_\pi}\qquad a\in\A\, .
\]
Finally notice that, setting $T_a := d \circ \lambda_L (a) - \rho_L(a) \circ d $, we have $\|T_a\| \le \|\partial a\|_{A{\widehat \otimes}H_\pi}$,
\[
[D,\pi_L (a)] =  \left ( \begin{array}{cc} 0 & T_a \\ -T_a^* & 0 \end{array} \right )
\]
and
\[
|[D,\pi_L (a)]|^2 =  \left ( \begin{array}{cc} T_a T_a^* & 0 \\ 0 & T_a^* T_a \end{array} \right ),
\]
so that
\[
\|[D,\pi_L (a)]\|= \|T_a\|\, ,\qquad a\in\A\, .
\]
\end{proof}

\begin{theorem}
\label{thm_spectral_triple}
Consider, on the Hopf $^*$-subalgebra $\A$ of a compact quantum group $\mathbb{G}=(\mathsf{A},\Delta)$, the GNS-symmetric generating functional $\phi\in\A^\prime$ with Sch\"{u}rmann triple $((\pi, H_\pi ), \eta, \phi)$.
\par\noindent
Consider also the GNS-symmetric, hermitian convolution generator $L_{\phi} :\A\rightarrow\A$ and its closed extension $(H_\phi ,D(H_\phi))$ on the space $L^2 (A,h)$, characterized by
\[
H_\phi (i_h(a)):=-i_h(L_{\phi} a)
\]
on its core $i_h(\A)=\A\xi_h\subset D(H_\phi)$.
\par\noindent
If the spectrum of $(H_\phi ,D(H_\phi))$ is discrete and considering the representation of $\mathsf{A}=C(\mathbb{G})$ constructed above
\[
\pi_{L} :=\rho_{L}\oplus \lambda_{L} :\mathsf{A}\rightarrow B(\mathcal{H}_\phi)  \qquad\mathcal{H}_\phi :=(L^2(\mathsf{A},h)\otimes H_\pi)\oplus L^2 (\mathsf{A},h)\, ,
\]
we have that $(\A ,D, (\pi_L ,\mathcal{H}_\phi))$ is a (possibly kernel-degenerate) spectral triple in the sense that
\begin{itemize}
\item $[D,\pi_L (a)]$ is a bounded operator for all $a\in\A$,
\item $D$ has discrete spectrum on the orthogonal complement of its kernel.
\end{itemize}
\end{theorem}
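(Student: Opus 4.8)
The theorem asserts two properties, and the first of them is already in hand. By Proposition \ref{prop_Dirac} the commutator $[D,\pi_L (a)]$ is bounded for every $a\in\A$, with $\|[D,\pi_L (a)]\|\le\|\partial a\|_{\mathsf{A}\widehat{\otimes}H_\pi}<\infty$. Hence the entire content of the proof reduces to the spectral statement: that $D$, restricted to $(\ker D)^\perp$, has discrete spectrum.

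The plan is to pass from $D$ to $D^2$. Since $D=\left(\begin{smallmatrix}0&d\\ d^*&0\end{smallmatrix}\right)$ is self-adjoint with $d$ closed and densely defined, a direct block computation gives $D^2=\left(\begin{smallmatrix}dd^*&0\\0&d^*d\end{smallmatrix}\right)$, so that $\ker D=\ker d^*\oplus\ker d=\ker D^2$ and $\sigma(D)^2=\sigma(D^2)=\sigma(dd^*)\cup\sigma(d^*d)$. Moreover $D$ anticommutes with the grading $\gamma=\left(\begin{smallmatrix}1&0\\0&-1\end{smallmatrix}\right)$, so $\sigma(D)$ is symmetric about $0$; consequently $D$ has discrete spectrum away from $0$ if and only if $D^2$ does. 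By Proposition \ref{prop_derivation} the lower block is $d^*d=2H_\phi$, whose spectrum is discrete away from $0$ by the standing hypothesis on $H_\phi$. It therefore remains only to control the upper block $dd^*$.

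For the upper block I would invoke the elementary supersymmetry between $d^*d$ and $dd^*$. Because $d$ maps $D(d^*d)$ into $D(dd^*)$ with $dd^*(dv)=\lambda\,(dv)$ whenever $d^*dv=\lambda v$, the operator $d$ sends the $\lambda$-eigenspace of $d^*d$ for $\lambda\neq0$ into the $\lambda$-eigenspace of $dd^*$, while $d^*$ furnishes the inverse up to the factor $\lambda$; this yields a linear isomorphism between the two nonzero eigenspaces (equivalently, the polar decomposition $d=u|d|$ gives a partial isometry $u$ restricting to a unitary from $(\ker d)^\perp$ onto $(\ker d^*)^\perp$ with $dd^*=u\,(d^*d)\,u^*$ there). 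Hence the nonzero parts of $\sigma(dd^*)$ and $\sigma(d^*d)$ coincide with equal finite multiplicities, so $dd^*$ is discrete away from $0$ as well. Combining the two blocks, $D^2$ is discrete away from $0$, and therefore $D$ has discrete spectrum on $(\ker D)^\perp$, which together with Proposition \ref{prop_Dirac} completes the proof.

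The one point demanding care is the rigour of this supersymmetry argument for the unbounded, merely closed operator $d$: one must verify that $d$ genuinely carries the finite-dimensional $\lambda$-eigenspace of $H_\phi$ into $D(d^*)$ and back, and that no spurious nonzero spectrum of $dd^*$ sits outside the image of these eigenspaces. This is where the block decomposition $H_\phi=\bigoplus_{s\in\mathcal{I}}H_\phi^s$ over the finite-dimensional subspaces $E_s$, recorded in Section \ref{sec_dirichlet}, is convenient: each eigenvector of $H_\phi$ lies in a finite sum of the cores $E_s\subset i_h(\A)$, on which $d$ and $d^*$ act concretely, so the eigenspace transfer and the absence of leftover spectrum can be checked at the level of these finite-dimensional pieces.
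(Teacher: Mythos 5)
Your proposal is correct and follows essentially the same route as the paper's proof: block-diagonalize $D^2$ as $dd^*\oplus d^*d$, identify $d^*d$ with $2H_\phi$ via Proposition \ref{prop_derivation}, and use the unitary equivalence of $dd^*$ and $d^*d$ on the orthogonal complements of their kernels (which the paper asserts directly and you justify via the polar decomposition) to conclude discreteness of $D$ away from its kernel. The extra care you flag about the unbounded supersymmetry argument is a reasonable elaboration of a step the paper takes for granted, not a different approach.
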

\begin{proof}
By construction
\[
D^2=\left(
     \begin{array}{cc}
       dd^* & 0 \\
       0 & d^* d \\
     \end{array}
   \right)\, ,
\]
so that the spectrum of $D^2$ is the union of the spectra of $dd^*$ and $d^* d$.
Since these two operators are unitarily equivalent on the orthogonal complement
of their kernels and zero belongs to the spectrum of $d^* d$, the spectrum of
$D^2$ coincides with the spectrum of $2H_\phi$, by Proposition
\ref{prop_derivation}. Since, by assumption, the spectrum of $H_\phi$ is
discrete we have that the spectrum of $D^2$, hence the one of $D$, are discrete
too on the orthogonal complement of their kernels. This result, together with
Theorem \ref{prop_Dirac} allows us to conclude the proof.
\end{proof}
The fact that the kernel of the Dirac operator $D$ may be infinite dimensional
is a variation with respect to the original definition of spectral triple given
in \cite{connes94}, due to the definition of $D$ as an antidiagonal matrix. To
construct the associated K-homology invariants this fact has to be taken into
account, for example using the methods developed in Section 3 of \cite{CGIS12}.

\section{Two classical examples: commutative and cocommutative CQGs}
\subsection{Algebras of functions on compact groups}
Let $G$ be a compact  Lie group and let $C(G)$ denotes the commutative
C${}^*$-algebra of all continuous functions on $G$. Then $C(G)$ is a compact
quantum group with the comultiplication
$$\Delta: C(G) \to C(G)\otimes C(G) \cong C(G\times G), $$
defined by
\begin{equation*} \label{comm_comultipliaction}
 \Delta (f)(s,t)=f(s t), \quad f\in C(G), \; s,t \in G.
\end{equation*}
 The counit and the antipode are defined on the dense $*$-subalgebra $C_c(G)$
generated by the coefficients of arbitrary continuous finite-dimensional
representation $\pi$, i.e. functions $\pi_{ij}: G \to \C$, and they are given by
$$\e (f)=f(e), \quad S(f)(x)=f(x^{-1}),\quad f\in C_c(G).$$

 This is a general example of a commutative compact quantum group, in the sense
that if $\mathsf{A}$ is the algebra of continuous functions on a compact quantum
group which is commutative as a C${}^*$-algebra, then there exists a unique
compact group $G$ such that $\mathsf{A}$ is isomorphic to $C(G)$ with coproduct
corresponding to the classical one given above (cf. \cite[Theorem
1.5]{woronowicz87}).

 The quantum group $C(G)$ is cocommutative if and only if the group $G$ is
commutative. It is always of Kac type, i.e. $S^2=\id$. This implies that the
modular automorphism group is trivial and that the Haar state is tracial (see
Remark \ref{rem_kac_type}), and so the notions of GNS- and KMS-symmetry coincide
(see Remark \ref{rem_kms_gns}).

\medskip
 The generating functionals of L\'evy processes in $G$ are classified by
Hunt's formula as follows (cf. \cite{liao04}). Let $\{X_1,X_2,\ldots, X_d\}$ be
a fixed basis of the Lie algebra $\mathfrak{g}$ associated to the Lie group $G$
and let $x_1,x_2,\ldots x_d \in C_c^\infty (G)$ be the local coordinates
associated to this basis, i.e. $X_i=\frac{\partial}{\partial x_i}$ at the
neutral element $e$.
Then an arbitrary generating functional $\phi$ is of the form
\begin{eqnarray*}
 \phi (f) &=& \sum_{i=1}^d c_i X_i f(e) + \frac12 \sum_{j,k=1}^d a_{jk} X_j X_k f(e)\\
&+& \int_{G\setminus \{e\}} \left( f(g)-f(e) -\sum_{i=1}^d x_i (g) X_i f(e) \right) \nu (dg)
\end{eqnarray*}
 for twice differentiable $f$. Here $c_i, a_{jk}$ are real constants,
$(a_{jk})_{j,k=1}^d$ is a positive definite symmetric matrix and the measure
$\nu$ on $G$ satisfies
$$ \nu(\{e\})=0, \quad  \int_U \sum_{i=1}^d x_i^2 {\rm d}\nu <\infty, \quad \nu (G\setminus U)<\infty$$
 for any neighborhood $U$ of $e$ in $G$. The first term in the decomposition
above is called the \emph{drift}, whereas the second one is called the
\emph{diffusion}. The measure $\nu$ is called \emph{L\'evy measure}.

 The GNS-symmetric processes correspond to functionals with no drift part and
symmetric L\'evy measures, i.e. $\nu(E)=\nu(E^{-1})$ for measurable $E$ (see
\cite[Proposition 4.3]{liao04}, where such processes are called \emph{invariant
under the inverse map}).

 The characterisation of $\ad$-invariant processes (called \emph{conjugate
invariant} in \cite{liao04}) depends on the particular group structure. The two
extreme cases are abelian Lie groups and simple Lie groups. In the first case,
as observed in Section \ref{sec_adinv}, the adjoint action is trivial and all
functionals are $\ad$-invariant. If the Lie group is simple and connected, then
the adjoint action $\ad (f) (x,y)= f(xyx^{-1})$ has trivial kernel. Then the
L\'evy measure of an $\ad$-invariant process must be \emph{conjugate-invariant}
(or \emph{central}, cf. \cite{applebaum10}), that is $\nu(gEg^{-1})=\nu(E)$ for
all measurable $E$. Moreover, the drift part vanishes and the diffusion part is
(up to a constant) the Beltrami-Laplace operator on $G$ (see \cite[Propositions
4.4, 4.5]{liao04}). In the case the Dirichlet form reduces to the Dirichlet integral on $G$
\[
\E[a]=\int_G\, |\nabla a(g)|^2\, dg
\]
defined on the Sobolev space $H^{1,2}(G)$ of functions having square integrable gradient and the derivation is just the gradient operator.
We refer to \cite{liao04} for details on this topic.
\vskip0.2truecm

\subsection{C${}^*$-algebra of a countable discrete group}

Let $\Gamma$ be a countable discrete group and let $\ell^2(\Gamma)$ denote the Hilbert
space of all square-summable functions on $\Gamma$. The space $\ell^2(\Gamma)$
is spanned by the orthonormal basis $\{\delta_g : g\in \Gamma\}$, where as usual
$\delta_g(h)=1$ if $g=h$ and $\delta_g(h)=0$ otherwise. Then each element $g \in
\Gamma$ defines the linear operator $\lambda_g : \ell^2(\Gamma) \to \ell^2(\Gamma)$
by the formula $$ \lambda_g(\delta_h) = \delta_{gh}, \quad h\in \Gamma.$$
Each $\lambda_g$ is a unitary operator and the mapping $g\to \lambda_g$ is
called the left regular unitary representation of the Hilbert space $\Gamma$ on
$\ell^2(\Gamma)$.

The closure of the $*$-algebra generated by $\{\lambda_g: g\in \Gamma\}$ in
$B(\ell^2(\Gamma))$ is denoted by $C^*_r(\Gamma)$ and called the \emph{reduced
C${}^*$-algebra} or the \emph{group algebra} of $\Gamma$. One can also define
the \emph{universal} C${}^*$-algebra of the group, denoted by $C^*_u(\Gamma)$,
by taking the direct sum of all cyclic representations of $\Gamma$ (universal
representation) instead of the left regular one. The two algebras are isomorphic
if and only if $\Gamma$ is amenable, cf. \cite{pedersen79}.

The mapping $\Delta$ defined by $\Delta(\lambda_g)=\lambda_g \otimes
\lambda_g$ extends (in a unique way) to a $*$-homomorphism from
$C^*_r(\Gamma)$ to $C^*_r(\Gamma)\otimes C^*_r(\Gamma)$ which preserves the
unit. The pair $(C^*_r(\Gamma), \Delta)$ is a compact quantum group. The linear
span $\A$ of $\{\lambda_g: g\in \Gamma\}$ in $B(\ell^2(\Gamma))$ is a $^*$-Hopf
algebra on which counit and antipode are defined by $\e(\lambda_g)=1$ and
$S(\lambda_g)=\lambda_{g^{-1}}$ respectively, for $g\in \Gamma$.

\medskip
The quantum group $C^*_r(\Gamma)$ is always cocommutative (i.e.\ the
comultiplication is invariant under the flip).  Moreover, each algebra of
continuous functions on a compact quantum group which is cocommutative  is
essentially of this form (there exists a unique discrete  group $\Gamma$ and
$*$-homomorphisms $C_u^*(\Gamma)\to \mathsf{A}\to C_r^*(\Gamma)$), see
\cite[Theorem 1.7]{woronowicz87}.  Cocommutativity implies that the adjoint
action is trivial: $\ad(a)=\1\otimes a$, $\ad_h(a)=a$ and all functionals are
$\ad$-invariant $\phi\circ \ad_h=\phi$.

\medskip
The algebra $C^*_r(\Gamma)$ is of Kac type so that the modular automorphism
group is trivial. The Haar state is a trace and, on generators, it is
explicitly given by $h(\delta_g)= 0$ for $g\neq e$ and $h(\delta_e)=1$. The GNS
Hilbert space $L^2 (C^*_r(\Gamma),h)$ can then be identified with $l^2
(\Gamma)$.

In this case the notions of GNS and KMS symmetry coincide, and $\phi$ is symmetric iff
$\phi(\lambda_g)=\phi(\lambda_{g^{-1}})$ for any $g\in \Gamma$.
 Moreover, symmetric generating functionals of L\'evy processes are in one-to-one
correspondence with (obviously continuous) positive, conditionally negative-type
functions
\[
d:\Gamma \to [0,\infty),\qquad d(g)=-\phi(\lambda_g), \quad g\in \Gamma
\]
(cf. \cite[Example 10.2]{cipriani+sauvageot03}). The associated Dirichlet form is given by
\[
\E[a]= \sum_{g\in\Gamma}\, d(g)|a(g)|^2 \qquad a\in l^2 (\Gamma)
\]
and the generator of the Markovian semigroup on $l^2 (\Gamma)$ is just the multiplication operator
\[
(H_\phi a)(g)=d(g)a(g),
\]
defined for those $a\in l^2 (\Gamma )$ such that the right hand side in square integrable.

\medskip
The derivation associated to the KMS symmetric generating functional $\phi$
(recall Section \ref{sec_derivation}) is given by $\partial(\lambda_g) =
\lambda_g \otimes \eta(\lambda_g)$ for $g\in \Gamma$, where $\eta$ is the
1-cocycle corresponding to $\phi$ in the Sch\"urmann triple $((\pi,D), \eta,
\phi)$.
Composing the 1-cocycle $\eta$ on the C$^*$-algebra $C^*_r(\Gamma)$ with the
left regular representation, one obtains the 1-cocycle
\[
c:\Gamma\to D,\qquad c(g)=\eta (\lambda_g)
\]
on the group $\Gamma$. In terms of this, the negative type function is given by
\[
d(g)=\|c(g)\|^2_D\, .
 \]
Identifying $l^2 (\Gamma)\otimes D$ with $l^2 (\Gamma ,D)$, one obtains that the derivation above reduces to the multiplication operator
\[
(\partial a)(g)=c(g)a(g)\qquad g\in\Gamma\, ,
\]
defined for all $a$ in the domain of the Dirichlet form.

The spectrum of the generator $H_\phi$ is discrete if and only if the
negative-type function $d$ is proper on $\Gamma$ (a condition which is met, for
example, for some length functions of finitely generated groups, see for
example \cite{cherix+cowling+jolissant+julg+valette01}). In these situations the
construction of a spectral triple shown in Theorem \ref{thm_spectral_triple}
applies.

\section{Example: free orthogonal quantum groups $O_N^+$}
\label{sec-On}

Let $N\ge 2$. The compact quantum group $(C_u(O_N^+),\Delta)$ is the universal
unital C${}^*$-algebra generated by $N^2$ self-adjoint elements $v_{jk}$,
$1\le j,k\le N$ subject to the condition that the matrix $V=(v_{jk})\in
M_N\otimes C_u(O_N^+)$ is a unitary corepresentation, i.e.\ that
\[
\sum_{\ell=1}^N v_{\ell j} v_{\ell k} =\delta_{jk} = \sum_{\ell =1}^N v_{j\ell} v_{k\ell}
\]
and
\[
\Delta(v_{jk}) = \sum_{\ell =1}^N v_{j\ell} \otimes v_{\ell k}
\]
for all $1\le j,k\le N$,
see \cite{vandaele+wang96,banica96}. The equivalence classes of the irreducible unitary corepresentations of this compact quantum group can be indexed by $\mathbb{N}$, with $u^{(0)}=\1$ the trivial corepresentation and $u^{(1)}=(v_{jk})_{1\le j,k\le N}$ the corepresentation whose coefficients are exactly the $N^2$ generators of $C_u(O_N^+)$ (this is also called the \emph{fundamental} corepresentation of $O_N^+$). The dense *-Hopf algebra ${\rm Pol}(O_N^+)$ associated to $O_N^+$, also called the *-algebra of polynomial on $O_N^+$, is the *-algebra generated by  $v_{jk}$, $1\le j,k\le N$. The compact quantum group
$O_N^+$ is
called the \emph{free orthogonal compact quantum group}. For $N>2$ it is not co-amenable, i.e.\
the Haar state of $O_N^+$ is not
faithful on $C_u(O_N^+)$, therefore we will study the Markov semigroups of
L\'evy processes on ${\rm Pol}(O_N^+)$ on the reduced $C^*$-algebraic
version $C_r(O_N^+)$ of $O_N^+$.

The compact quantum group $O^+_N$ is of Kac type, and therefore a generating
functional $\phi$ is KMS-symmetric if and only if it is GNS-symmetric, which is
the case if the characteristic matrices are symmetric, i.e.\ if
$\phi(u_{jk}^{(s)})=\phi(u_{kj}^{(s)})$ for all $s\in \mathbb{N}$ and $j,k$
running from 1 up to the dimension of the $s$th corepresentation.

Corollary \ref{cor-bijection} reduced the problem of classifying ad-invariant
generating functionals on a compact quantum group to the classification of
generating functionals on the subalgebra of central functions. For the free
orthogonal quantum group $O^+_N$ the algebra of central functions is
isomorphic the the C${}^*$-algebra of continuous functions on the interval
$[-N,N]$, cf.\ \cite[Corollary 4.3]{brannan2011}. Furthermore, the restriction
of the counit to this subalgebra is the evaluation of a function in a boundary point.

Let us begin by describing linear functionals which are positive on a given
interval and vanish in a given point.
\begin{proposition}\label{prop-levy-khinchin}
Denote by $\tau_x:C([0,1])\to\mathbb{C}$ the evaluation of a function in $x\in[0,1]$.
\begin{itemize}
\item[{\bf (a)}]
Suppose $0<x<1$. A linear functional
$\varphi:\mathbb{C}[x]\to \mathbb{C}$ with $\varphi(1)=0$ is positive on the cone
\[
K_x([0,1]) = \mathbb{C}[x]\cap C([0,1])_+ \cap {\rm ker}(\tau_x)
\]
if and only if there exist real numbers $a,b$ with $a\ge 0$ and a finite measure $\nu$ on $[0,1]$ with $\nu(\{x\})=0$ such that
\[
\varphi(f) = bf'(x) + a f''(x) + \int_0^1 \big(f(y)-f(x)-yf'(x)\big) \frac{\nu({\rm d}y)}{(y-x)^2}
\]
for all polynomials $f\in C([0,1])$.

 The triple $(a,b,\nu)$ is uniquely determined by $\varphi$. We will call
$(a,b,\nu)$ the \emph{characteristic triple} of the linear functional
$\varphi$.
\item[{\bf (b)}]
Suppose $x\in\{0,1\}$. Then a linear functional
$\varphi:\mathbb{C}[x]\to \mathbb{C}$ with $\varphi(1)=0$ is positive on the cone
\[
K_x([0,1]) = \mathbb{C}[x]\cap C([0,1])_+ \cap {\rm ker}(\tau_x)
\]
 if and only if there exist a real number $d$ with $d\ge 0$ if $x=0$, and $d\le
0$ if $x=1$, and a finite measure $\mu$ on $[0,1]$ with $\mu(\{0\})=0$ such that
\[
\varphi(f) = df'(x) + \int_0^1 \big(f(y)-f(x)\big) \frac{\mu({\rm d}y)}{y}
\]
for all polynomials $f\in C([0,1])$.

 The pair $(d,\mu)$ is uniquely determined by $\varphi$.  We will call $(b,\nu)$
the \emph{characteristic pair} of the linear functional $\varphi$.
\end{itemize}
\end{proposition}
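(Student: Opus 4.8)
The plan is to reduce everything, in both cases, to the solution of the Hausdorff moment problem on the compact interval $[0,1]$, in the form of Haviland's theorem: a linear functional $\Lambda$ on real polynomials with $\Lambda(p)\ge 0$ whenever $p\ge 0$ on $[0,1]$ is represented by a unique finite positive Borel measure on $[0,1]$. Accordingly, I would begin by describing the cone $K_x([0,1])$ explicitly. In case \textbf{(a)}, if a polynomial $f\ge 0$ on $[0,1]$ vanishes at the interior point $0<x<1$, then $x$ is an interior minimum, so $f'(x)=0$ and $f''(x)\ge 0$, and $(y-x)^2$ divides $f$; writing $f=(y-x)^2 h$ one checks that $h\ge 0$ on $[0,1]$ (for $y\neq x$ this is clear, and it extends by continuity). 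Conversely every such product lies in $K_x([0,1])$, so $K_x([0,1])=\{(y-x)^2 h:\ h\ge 0 \text{ on } [0,1]\}$, with linear span the ideal $(y-x)^2\,\mathbb{C}[x]$ of codimension two.

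Next I would transport $\varphi$ to a genuine moment functional by setting $\psi(h):=\varphi\big((y-x)^2 h\big)$. This is linear, and cone-positivity of $\varphi$ means exactly that $\psi(h)\ge 0$ whenever $h\ge 0$ on $[0,1]$; Haviland's theorem then supplies a unique finite positive measure $\mu$ on $[0,1]$ with $\psi(h)=\int_0^1 h\,d\mu$. The diffusion and L\'evy parts are separated by splitting off the atom at $x$: write $\mu=\mu(\{x\})\delta_x+\nu$ with $\nu(\{x\})=0$ and put $a:=\frac{1}{2}\mu(\{x\})\ge 0$, so that $\psi(h)=2a\,h(x)+\int_0^1 h\,d\nu$. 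To recover $\varphi$ on all of $\mathbb{C}[x]$ I use the Taylor splitting $f(y)=f(x)+f'(x)(y-x)+(y-x)^2 h_f(y)$, where $h_f$ is a polynomial and $h_f(x)=\frac{1}{2} f''(x)$. Linearity together with $\varphi(1)=0$ then yields
\[
\varphi(f)=b\,f'(x)+a\,f''(x)+\int_0^1 \frac{f(y)-f(x)-(y-x)f'(x)}{(y-x)^2}\,d\nu(y),\qquad b:=\varphi(y-x),
\]
which is the asserted representation; here the compensator is chosen so that the integrand is a bounded polynomial in $y$, guaranteeing convergence of the integral for every $f$. Reality of $b$ (hence of all the data) comes from $\varphi$ being real on real polynomials, as it is for the hermitian generating functionals of interest. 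Uniqueness is then clear: $b=\varphi(y-x)$, while $a$ and $\nu$ are read off from $\mu$, which is unique because the moment problem on $[0,1]$ is determinate. The converse is the direct verification that the displayed formula defines a functional vanishing at $1$ and equal to $2a\,h(x)+\int h\,d\nu\ge 0$ on $K_x([0,1])$.

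For case \textbf{(b)} the same scheme applies with a simple zero in place of a double one. At a boundary point $x\in\{0,1\}$, a non-negative polynomial vanishing at $x$ factors as $f=(y-x)g$ with $g$ of constant sign on $[0,1]$ --- namely $g\ge 0$ for $x=0$ and $g\le 0$ for $x=1$, since $y-x$ has constant sign there --- so the defining ideal $(y-x)\,\mathbb{C}[x]$ has codimension one. Dividing by $(y-x)$ again produces a moment functional to which Haviland's theorem applies, and splitting off the atom at $x$ yields the single coefficient $d$, whose sign is forced to be $\ge 0$ for $x=0$ and $\le 0$ for $x=1$ by the sign of $g$, together with the measure $\mu$. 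Because the ideal now has codimension one, no second-order (diffusion) term survives, which is why the representation in \textbf{(b)} contains only $d\,f'(x)$.

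The main obstacle I anticipate is not a single hard computation but the bookkeeping around the atom at $x$: one must confirm that $\mu$ is the unique representing measure, correctly identify $2a$ (respectively $d$) as its mass at $x$ so that $\nu$ carries no atom there, and check that the chosen compensator renders $\big(f(y)-f(x)-(y-x)f'(x)\big)/(y-x)^2$ a genuine polynomial. The only genuinely external inputs are Haviland's theorem and the determinacy of the Hausdorff moment problem on the compact interval $[0,1]$.
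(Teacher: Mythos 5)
Your proof is correct, but it takes a genuinely different route from the paper's. For part (a) the paper gives no argument at all: it simply cites the classical L\'evy--Khinchin formula (Sato, Hunt) and Skeide's C${}^*$-algebraic proof. You instead give a self-contained elementary argument: identify the cone as $K_x=\{(y-x)^2h : h\ge 0 \text{ on }[0,1]\}$, apply the solution of the Hausdorff moment problem (Haviland plus determinacy on a compact interval) to $h\mapsto\varphi\big((y-x)^2h\big)$, split off the atom at $x$ to get $a$, and recover $\varphi$ via the Taylor splitting $f=f(x)+f'(x)(y-x)+(y-x)^2h_f$. For part (b) the divergence is sharper: the paper derives (b) \emph{from} (a) by first obtaining a triple $(a,b,\nu)$ and then using the test functions $g_n(y)=\frac{1}{n+1}y\sum_{k=0}^n(1-y)^k$ together with a monotone-convergence argument to force $a=0$ and to show that $\tfrac{1}{y}\nu$ is finite, whereas you prove (b) directly by the same moment-problem scheme applied to a simple zero, which avoids the test-function computation and explains structurally why no second-order term survives (the ideal is of codimension one). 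Your approach buys uniformity and self-containedness; the paper's buys brevity for (a) at the price of an external reference and a less transparent argument for (b). Two remarks. First, your compensator $(y-x)f'(x)$ differs from the printed $yf'(x)$ in the statement; yours is the one for which the integral converges for an arbitrary finite $\nu$ with $\nu(\{x\})=0$ (the two agree when $x=0$, which is the only case the paper actually computes), so this is a correction of a typo rather than a discrepancy. Second, you rightly observe that positivity on $K_x$ alone does not force $b=\varphi(y-x)$ to be real, since $y-x$ lies outside the span of the cone; realness comes from hermiticity of the functionals to which the proposition is applied, a point the paper leaves implicit.
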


\begin{proof}
{\bf (a)}
This is actually the classical L\'evy-Khinchin formula for L\'evy processes on
$\mathbb{R}$, see, e.g., \cite[Theorem 8.1]{sato99}, which can be viewed as
a special case of Hunt's formula \cite{hunt56}. Skeide \cite{skeide99} has
given a C${}^*$-algebraic proof which doesn't use the group structure, but works
for the $C^*$-algebra of continuous functions on a compact set, with a
character given by evaluation in a fixed point which has neighborhood with
Euclidean coordinates (i.e.\ smooth functions admit a Taylor expansion around
the fixed point).

{\bf (b)}
This is actually the classical L\'evy-Khinchin formula for subordinators,
cf. \cite[Theorem 21.5]{sato99}. We prove the formula for $x=0$, the case
$x=1$ follows easily by a change of variable $t\mapsto 1-t$.

By (a), since $\varphi$ has to be positive
also on the smaller cone given by polynomials that vanish in $x=0$ and which
are positive on $[-\varepsilon,1]$ for any $\varepsilon>0$, there exists a unique triple $(a,b,\nu)$ with $a,b\in \mathbb{R}$ with $a\ge 0$ and $\nu$ a finite measure on $[0,1]$, such that
\[
\varphi(f) = bf'(0) +a f''(0) + \int_0^1
\big(f(y)-f(0)-yf'(0)\big)\frac{\nu({\rm d}y)}{y^2}.
\]
For $n \in\mathbb{N}$ we set $g_n(y) = \frac{1}{n+1} y \sum_{k=0}^n (1-y)^k$. We
have $g_n\in K_0([0,1])$, $g'_n(0)=1$, and $g''_n(0)=-n$, therefore
\[
0\le \varphi(g_n) = b -na + \int_0^1 \big(g_n(y)-y\big)\frac{\nu({\rm d}y)}{y^2}
\]
for all $n\in\mathbb{N}$. The sequence $(g_n)_{n\in\mathbb{N}}$ is decreasing
and therefore we must have $a=0$. By monotone convergence we get
\[
b \ge \int_0^1 \big(y-g_n(y)\big)\frac{\nu({\rm d}y)}{y^2} \quad\stackrel{n\to\infty}{\longrightarrow}\quad \int_0^1
\frac{\nu({\rm d}y)}{y}
\]
which proves that the measure  $\frac{1}{y}\nu$ is finite. Putting
$\mu=\frac{1}{y}\nu$ and $d=b-\int_0^1 \frac{\nu({\rm d}y)}{y}\ge 0$, we get
the desired formula
\[
\varphi(f) = df'(0) + \int_0^1 \big(f(y)-f(0)\big) \frac{\mu({\rm d}y)}{y}.
\]
Conversely, since a polynomial $f$ which vanishes in $x=0$ and is
positive on $[0,1]$ has a positive derivative at $x=0$, it is clear that any
such functional is positive on $K_0([0,1])$. Uniqueness follows from (a).
\end{proof}

This result allows us to describe all ad-invariant generating functionals on
${\rm Pol}(O^+_N)$. This result can be considered as Hunt's formula for
ad-invariant L\'evy processes on the free orthogonal quantum group $O_N^+$.

Let us denote by ${\rm Pol}_0(O^+_N)$ the algebra of central polynomial
functions on $O^+_N$, see Eq.\ \eqref{eq_central_functions}. We will use the
same isomorphism between ${\rm
Pol}_0(O^+_N)$ and polynomials ${\rm Pol}([-N,N])$ as Brannan
\cite{brannan2011}. Recall that Banica \cite{banica96} showed that the
equivalence classes of irreducible unitary
corepresentations of $O_N^+$ can be labelled by non-negative integers and that
they satisfy the ``fusion rules''
\[
u^{(s)}\otimes u^{(t)} \cong u^{(|s-t|)}\oplus u^{(|s-t|+2)}\oplus \cdots
\oplus u^{(s+t)}
\]
for $s,t\in\mathbb{N}$. Since the trivial corepresentation
$u^{(0)}=\mathbf{1}$ has dimension $1$ and the fundamental corepresentation
$u^{(1)}=(v_{jk})_{1\le j,k\le N}$ has dimension $N$, one can show by
induction that the dimensions of the irreducible unitary corepresentations are
given by Chebyshev polynomials of the second kind,
$D_s = U_s(N)$. The conditional expectation $\widetilde{\rm ad}_h:{\rm
  Pol}(O_N^+)\to {\rm Pol}_0(O_N^+)$ onto the algebra of central functions is
therefore given by
\[
\widetilde{\rm ad}_h(u^{(s)}_{jk}) = \frac{1}{U_s(N)} \delta_{jk} \chi_s,
\]
where $\chi_s=\sum_{j=1}^{D_s} u^{(s)}_{jj}$ denotes the trace of $u^{(s)}$.

The fusion rules imply that the characters satisfy the three-term recurrence relation
\[
\chi_1 \chi_s = \chi_{s+1} + \chi_{s-1}
\]
for $s\ge 1$, we get the desired isomorphism ${\rm Pol}(O^+_N)_0\cong
{\rm Pol}([-N,N])$ by setting $\chi_s\mapsto U_s$ for $s\in \mathbb{N}$, where
$U_s$ denotes the $s^{\rm th}$ Chebyshev polynomial of the second kind, defined
by $U_0(x)=1$, $U_1(x)=x$, and $U_{s+1}(x) = x U_s(x)-U_{s-1}(x)$.

\begin{theorem}\label{thm-hunt-on+}
The ad-invariant generating functional on ${\rm Pol}(O^+_N)$ are of the form
\[
\hat{L}=L\circ \widetilde{\rm ad}_h
\]
with $L$ defined on ${\rm Pol}(O^+_N)_0\cong {\rm Pol}([-N,N])$ by
\[
Lf = -b f'(N) + \int_{-N}^N \frac{f(x) - f(N)}{N-x} {\rm d}\nu(x)
\]
where $b\ge0$ is a real number and $\nu$ is a finite measure on $[-N,N]$ with $\nu(\{N\})=0$..
\end{theorem}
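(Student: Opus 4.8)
The plan is to reduce the classification over all of ${\rm Pol}(O_N^+)$ to one over the central subalgebra, and then to recognise the latter as an instance of Proposition \ref{prop-levy-khinchin}. First I would invoke Corollary \ref{cor-bijection}: since $O_N^+$ is of Kac type, the map $\widetilde{\rm ad}_h^*$ is a bijection between generating functionals $L$ on $\mathcal{A}_0={\rm Pol}(O_N^+)_0$ and {\rm ad}-invariant generating functionals $\hat L=L\circ\widetilde{\rm ad}_h$ on ${\rm Pol}(O_N^+)$. Thus it suffices to describe all generating functionals on $\mathcal{A}_0$ and then compose with $\widetilde{\rm ad}_h$.

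Next I would transport the problem to the interval, following Brannan. Using the fusion rules and the three-term recurrence $\chi_1\chi_s=\chi_{s+1}+\chi_{s-1}$, the assignment $\chi_s\mapsto U_s$ identifies $\mathcal{A}_0$ with ${\rm Pol}([-N,N])$. Under this identification the $*$-structure becomes complex conjugation, since the characters $\chi_s$ are self-adjoint, and the counit becomes evaluation at the \emph{right} endpoint $N$, since $\varepsilon(\chi_s)=\dim u^{(s)}=U_s(N)$. Hence a generating functional on $\mathcal{A}_0$ is a hermitian functional $L$ on ${\rm Pol}([-N,N])$ with $L(\mathbf 1)=0$ subject to the appropriate conditional-positivity condition with respect to the character $\tau_N$.

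The key step, and the one I expect to be the main obstacle, is to pin down exactly which positivity condition $L$ must satisfy: namely positivity on the cone $K_N([-N,N])$ of polynomials that are non-negative on $[-N,N]$ and vanish at $N$. Here Theorem \ref{thm-hatS}(a) is essential. For $b\in\ker\varepsilon$ the element $\widetilde{\rm ad}_h(b^*b)=(h\otimes{\rm id})\big((\widetilde{\rm ad}(b))^*\,\widetilde{\rm ad}(b)\big)$ is a positive element of $\mathcal{A}_0$, i.e.\ (after the identification) a non-negative polynomial on $[-N,N]$, and it vanishes at $N$ because $\varepsilon\circ\widetilde{\rm ad}_h=\varepsilon$ gives $\varepsilon\big(\widetilde{\rm ad}_h(b^*b)\big)=|\varepsilon(b)|^2=0$. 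Thus $\widetilde{\rm ad}_h$ carries the squares $b^*b$, $b\in\ker\varepsilon$, into $K_N([-N,N])$, and conversely these images exhaust the positive span of $K_N([-N,N])$; consequently conditional positivity of $\hat L=L\circ\widetilde{\rm ad}_h$ on ${\rm Pol}(O_N^+)$ is equivalent to positivity of $L$ on $K_N([-N,N])$. Verifying this equivalence is the delicate point, and I would stress that positivity on $K_N$ is genuinely stronger than positivity of $L$ on squares of $\mathcal{A}_0$ alone: for instance $f\mapsto f''(N)$ is non-negative on all such squares yet takes the value $-2$ on $N^2-x^2\in K_N([-N,N])$. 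This is precisely why no Gaussian (second-derivative) term can survive at the boundary point.

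Finally, with $L$ identified as a hermitian functional on ${\rm Pol}([-N,N])$ with $L(\mathbf 1)=0$ that is positive on $K_N([-N,N])$, I would apply Proposition \ref{prop-levy-khinchin}(b) at the boundary point $x=N$, obtained from the stated $x\in\{0,1\}$ result by the affine change of variables sending $[0,1]$ to $[-N,N]$ with $1\mapsto N$. This yields exactly the asserted form
\[
Lf=-b f'(N)+\int_{-N}^N \frac{f(x)-f(N)}{N-x}\,{\rm d}\nu(x),
\]
with $b\ge 0$ and $\nu$ a finite measure on $[-N,N]$ satisfying $\nu(\{N\})=0$; the sign of the drift coefficient is forced by $N$ being the right endpoint. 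Composing $L$ with $\widetilde{\rm ad}_h$ then recovers every {\rm ad}-invariant generating functional on ${\rm Pol}(O_N^+)$ as $\hat L=L\circ\widetilde{\rm ad}_h$, completing the classification.
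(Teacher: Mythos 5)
Your proof follows exactly the paper's route --- reduce via Corollary \ref{cor-bijection} to generating functionals on the central algebra $\mathcal{A}_0\cong{\rm Pol}([-N,N])$ with counit equal to evaluation at the endpoint $N$, then apply Proposition \ref{prop-levy-khinchin}(b) after an affine change of variables --- and in fact supplies far more detail than the paper's one-line proof. The single step you assert rather than prove (that the elements $\widetilde{\rm ad}_h(b^*b)$, $b\in\ker\varepsilon$, positively span the cone $K_N([-N,N])$, which is what forces the positivity condition of Proposition \ref{prop-levy-khinchin}(b) and kills the Gaussian term) is precisely the point the paper also leaves implicit, and your counterexample $f\mapsto f''(N)$ on $N^2-x^2$ correctly identifies why that stronger cone is the right one.
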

\begin{proof}
This follows from Theorem \ref{thm-hatS} and Proposition
\ref{prop-levy-khinchin}.
\end{proof}

Using the discussion above, we can give a formula for the values of
ad-invariant generating functionals on the coefficients of the irreducible
unitary corepresentations of $O_N^+$.

\begin{corollary}\label{cor-L}
The ${\rm ad}$-invariant generating functional on ${\rm Pol}(O^+_N)$ given in
Theorem \ref{thm-hunt-on+} with characteristic pair $(b,\nu)$ acts on the
coefficients of unitary irreducible corepresentations of $O_N^+$ as
\[
 L\big(u_{jk}^{(s)}\big) = \frac{\delta_{jk}}{U_s(N)}\left( -b U'_s(N) +
\int_{-N}^N \frac{U_s(x)-U_s(N)}{N-x} \nu({\rm d}x)\right)
\]
for $s\in\mathbb{N}$, where $U_s$ denotes the $s^{\rm th}$ Chebyshev polynomial
of the second kind.
\end{corollary}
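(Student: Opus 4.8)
The plan is to combine the explicit description of the conditional expectation $\widetilde{\rm ad}_h$ on corepresentation coefficients with the form of the generating functional supplied by Theorem \ref{thm-hunt-on+}. By that theorem, every ${\rm ad}$-invariant generating functional on ${\rm Pol}(O_N^+)$ factors as $\hat{L}=L\circ\widetilde{\rm ad}_h$, where $L$ is the functional on the central subalgebra ${\rm Pol}(O_N^+)_0\cong{\rm Pol}([-N,N])$ determined by the characteristic pair $(b,\nu)$. So the corollary is really just an evaluation of this composition on the basis elements $u_{jk}^{(s)}$.

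First I would apply Equation \eqref{eq-adh}, which computes $\widetilde{\rm ad}_h$ on the coefficient $u_{jk}^{(s)}$ as
\[
\widetilde{\rm ad}_h\big(u_{jk}^{(s)}\big) = \frac{\delta_{jk}}{D_s}\sum_{p=1}^{n_s} u_{pp}^{(s)} = \frac{\delta_{jk}}{U_s(N)}\,\chi_s ,
\]
using that the quantum dimension of $O_N^+$ is $D_s=U_s(N)$, the value at $N$ of the $s^{\rm th}$ Chebyshev polynomial of the second kind, as recorded before the statement of Theorem \ref{thm-hunt-on+}.

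Next I would apply $L$ and use linearity to pull out the scalar $\delta_{jk}/U_s(N)$, so that $\hat{L}(u_{jk}^{(s)})=\frac{\delta_{jk}}{U_s(N)}\,L(\chi_s)$. Under the isomorphism ${\rm Pol}(O_N^+)_0\cong{\rm Pol}([-N,N])$ sending $\chi_s\mapsto U_s$, the character $\chi_s$ corresponds to the polynomial $U_s$, so $L(\chi_s)$ is obtained by evaluating the formula of Theorem \ref{thm-hunt-on+} at $f=U_s$:
\[
L(\chi_s) = -b\,U_s'(N) + \int_{-N}^N \frac{U_s(x)-U_s(N)}{N-x}\,\nu({\rm d}x).
\]
Substituting this back yields precisely the claimed formula.

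There is essentially no obstacle here, since the corollary is a direct specialization of results already established. The only points requiring attention are to invoke the isomorphism $\chi_s\mapsto U_s$ in the correct direction, so that the abstract evaluation $L(\chi_s)$ is identified with the concrete integral expression, and to note that the integrand $\frac{U_s(x)-U_s(N)}{N-x}$ extends to a genuine polynomial in $x$ (because $N-x$ divides $U_s(x)-U_s(N)$), so that the integral against the finite measure $\nu$ is well defined and the expression makes sense on all of $[-N,N]$.
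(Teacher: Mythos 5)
Your proof is correct and follows exactly the route the paper intends: the corollary is the composition $\hat{L}=L\circ\widetilde{\rm ad}_h$ evaluated on $u_{jk}^{(s)}$ via Equation \eqref{eq-adh}, the identification $D_s=U_s(N)$, and the isomorphism $\chi_s\mapsto U_s$, which is precisely the "discussion above" the paper invokes. Your added remark that $N-x$ divides $U_s(x)-U_s(N)$, so the integrand is polynomial and the integral against the finite measure $\nu$ is well defined, is a correct and worthwhile observation.
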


\begin{remark}
Since the characteristic matrices of $L$ are diagonal, we can read off the eigenvalues of $T_L$ from Corollary
\ref{cor-L}. Assume for simplicity $b=1$, $\nu=0$.
Then the eigenvalues of $T_L$ are given by
\[
\lambda_s = -\frac{U'_s(N)}{U_s(N)},\qquad s\in\mathbb{N},
\]
with multiplicities given by the square of the dimension $m_s=D_s^2=\big(U_s(N)\big)^2$ of $u^{(s)}$.

Recall that the ``spectral dimension'' $d_D$ of the associated spectral triple is, by definition (see \cite{connes04a}, \cite{ connes04b}), the abscissa of convergence of the zeta function $z\mapsto \mathcal{Z}_D (z):={\rm Tr}\, (|D|^{-z})$, initially defined for $z\in \mathbb{C}$ with ${\rm Re}\,  z>0$. It coincides with the infimum of all $d>0$ such that the sum $\sum_s m_s(-\lambda_s)^{-d/2}$ is finite. In the present situation of Corollary \ref{cor-L} and assuming $N=2$, $b=1$, $\nu=0$, we have $U_s(2)=s+1$, $U'_s(2)=\frac{s(s+1)(s+2)}{6}$,
\[
\lambda_s= -\frac{s(s+2)}{6}
\]
and finally $d_D=3$. This value of the spectral dimension agrees nicely with the known fact that $O_2^+$ is isomorphic to $SU_{-1}(2)$, see \cite{banica96}, and that $C(SU_{-1}(2))$ can be
realized by matrix-valued functions on the three-dimensional Lie group $SU(2)$, cf.\ \cite{zakrzewski91}. On the other hand, for $N>2$, we have
\begin{eqnarray*}
U_s(N) &=& \frac{q(N)^{s+1} - q(N)^{-s-1}}{q(N)-q(N)^{-1}}, \\
U'_s(N) &=& \frac{q'(N)}{q(N)}\, \frac{s\big(q(N)^{s+2} - q(N)^{-s-2}\big) -
  (s+2)\big(q(N)^s - q(N)^{-s}\big)}{\big(q(N)-q(N)^{-1}\big)^2} \\
&=& \frac{q'(N)}{q(N)}\left(s\frac{q(N)^{s+1} - q(N)^{-s-1}}{q(N)-q(N)^{-1}}-2\frac{q(N)^s - q(N)^{-s}}{\big(q(N)-q(N)^{-1}\big)^2}\right),
\end{eqnarray*}
with $q(N)=\frac{1}{2}(N+\sqrt{N^2-4})>1$,
$q'(N)=\frac{1}{2}\left(1+\frac{N}{\sqrt{N^2-4}}\right)>0$, and
\[
\lambda_s=-\frac{q'(N)}{q(N)}\left(s- 2\frac{q(N)^s - q(N)^{-s}}{\big(q(N)^{s+1} - q(N)^{-s-1}\big)\big(q(N)-q(N)^{-1}\big)}\right).
\]
Since $q(N)$ is bigger then 1 (and fixed), the term
$$\frac{q(N)^s - q(N)^{-s}}{\big(q(N)^{s+1} -
q(N)^{-s-1}\big)\big(q(N)-q(N)^{-1}\big)} \to 0\quad \mbox{as} \quad s\to
\infty.$$
This implies that the growth of the eigenvalues $\lambda_s$ (as a function of
$s$) is asymptotically linear $\lambda_s\cong -\frac{q'(N)}{q(N)}s$, while the
multiplicities $m_s = U_s(N)^2\cong \frac{q(N)^{2s}}{(1-q(N)^{-2})^2} $ grow
exponentially, therefore the sum
$$\sum_s m_s(-\lambda_s)^{-d/2} \cong \sum_s
\frac{q(N)^{2s}}{s^{d/2}} $$
can never converge, which means that $d_D=+\infty$.
\end{remark}

\section{Example: Woronowicz quantum group $SU_q(2)$}

 Let us fix $q\in (0,1)$. The compact quantum group $C(SU_q(2))$ is the
universal unital C${}^*$-algebra generated by $\alpha$ and $\gamma$ subject to
the following relations
\begin{eqnarray*}
 & \alpha^* \alpha+\gamma^* \gamma=1, \quad \alpha \alpha^*+q^2\gamma \gamma^*=1, \\
 & \gamma^* \gamma = \gamma \gamma^*, \quad \alpha\gamma = q \gamma \alpha, \quad  \alpha\gamma^* = q \gamma^* \alpha
\end{eqnarray*}
with the comultiplication extended uniquely to a unit-preserving $*$-homomorphism from the formulas
\begin{equation*}\label{suq2_comultiplication}
  \Delta(\alpha)=\alpha \otimes \alpha - q\gamma^* \otimes \gamma, \quad
\Delta(\gamma)=\gamma \otimes \alpha + \alpha^* \otimes \gamma.
\end{equation*}

 For $C(SU_q(2))$ the equivalence classes of irreducible unitary
corepresentations are indexed by non-negative half-integers $s\in \frac12
\mathbb{N}$ and are of dimension $n_s=2s+1$. For each
$u^{(s)}=(u_{jk}^{(s)})_{j,k}$ the indices $j,k$ run over the set $\{-s, -s+1,
\ldots, s-1,s\}$ (see eg. \cite{pusz+woronowicz00} for the detailed description
of $u^{(s)}$). Moreover, every corepresentation is equivalent to its
contragredient one and we have
\begin{equation}
 \label{eq-star}
S(u_{kj})=(u_{jk}^{(s)})^* = (-q)^{k-j}u_{-j,-k}^{(s)}.
\end{equation}

The quantum group is neither commutative nor cocommutative.
 The Woronowicz characters, the modular automorphism group, the unitary antipode
and the quantum dimension are the following (cf. \cite[Appendix
A1]{woronowicz87}:
\begin{eqnarray}
\label{suq2-form}
f_z(u_{jk}^{(s)})&=&q^{2jz} \delta_{jk}, \\ 
\label{suq2-sigma}
\sigma_{z}(u_{jk}^{(s)})
&=& f_{iz} \star u_{jk}^{(s)} \star f_{iz} =
q^{2iz(j+k)} u_{jk}^{(s)}, \\ 
\label{suq2-R}
R(u_{jk}^{(s)})
 &=& S(f_{\frac{1}{2}} \star u_{jk}^{(s)} \star f_{-\frac{1}{2}})
= q^{k-j}(u_{kj}^{(s)})^*,\\
\label{suq2-D}
D_s&=& \sum_{k=-s}^{s} f_1(u_{kk}^{(s)})= \sum_{k=-s}^{s} q^{2k}
=q^{-2s} [2s+1]_{q^2}.
\end{eqnarray}

\medskip
The following example describes the irreducible representations of $C(SU_q(2))$
and the related opposite representations (cf. Section \ref{sec_schurmann}).
\begin{example} \label{rep_opp}
 On $\mathsf{A}=C(SU_q(2))$ we have two families of irreducible $*$-representation indexed by $\theta\in [0,2\pi)$:
\begin{enumerate}
\item the 1-dimensional representations $\delta_\theta:\mathsf{A} \to\C$:
$$ \delta_\theta(\alpha)= e^{i\theta}, \quad \delta_\theta(\gamma)=0;$$
\item the infinitely-dimensional representations on a Hilbert space
$\rho_\theta:\mathsf{A} \to B(\ell^2)$:
$$ \rho_\theta(\alpha)e_n= We_n, \quad \rho_\theta(\gamma)e_n=e^{i\theta}q^ne_n,$$
where $(e_n)_{n\in \mathbb{N}}$ is the standard orthonormal basis of $\ell^2$
and $W$ is the weighted shift defined by $We_0=0$ and
$We_n=\sqrt{1-q^{2n}}e_{n-1}$ for $n\geq 1$.
\end{enumerate}
We check directly that
$$\delta_\theta^\op = \delta_{-\theta} \quad \mbox{ and } \quad \rho_\theta^\op = \rho_{\pi+\theta}.$$

Indeed, we note first that $R(\alpha)=\alpha^*$ and $R(\gamma)=-\gamma$. So  for $\delta_\theta$ we have
$$\delta_\theta^\op (\alpha)\bar{1} = \overline{\delta_\theta(R(\alpha^*))1} =
\overline{\delta_\theta(\alpha)1} =\overline{e^{i\theta}1} =  e^{-i\theta} \bar{1}=\delta_{-\theta} (\alpha)\bar{1} $$
and
$$\delta_\theta^\op (\gamma)\bar{1} = \overline{\delta_\theta(R(\gamma^*))1} =
\overline{-\delta_\theta(\gamma^*)1} =0=\delta_{-\theta} (\gamma)\bar{1}. $$

Similarly, for $\rho_\theta$ we compute
$$\rho_\theta^\op (\gamma) \bar{e}_n = \overline{-\rho_\theta(\gamma^*)e_n} =\overline{-e^{-i\theta}q^ne_n} = e^{i(\pi+\theta)}q^n\bar{e}_n=\rho_{\pi+\theta} (\gamma)\bar{e}_n.$$

$$\rho_\varphi^\op (\alpha) \bar{e}_n = \overline{\rho_\varphi(\alpha)e_n} =\overline{We_n} =  \sqrt{1-q^{2n}} \bar{e}_n=\rho_{\pi+\theta} (\alpha)\bar{e}_n.$$
\end{example}

\subsection{GNS-symmetric generators}
\label{example}

We first describe a generic GNS-symmetric functional on $SU_q(2)$ and provide an
example of an unbounded generating functional.

\begin{proposition}\label{generic_symmetric_suq2}
A hermitian functionals $\phi$ on $SU_q(2)$ defined by
\begin{equation} \label{ex_gns_suq2}
 \phi(u_{jk}^{(s)}) = c_{s,j}\delta_{jk}
\end{equation}
with real constants $(c_{s,j})_{s\in \frac12\mathbb{N}, -s\leq j\leq s}$ is
GNS-symmetric.

Reciprocally, any GNS-symmetric generator $\phi$ on $SU_q(2)$ must be of the
form \eqref{ex_gns_suq2}
and it is hermitian if and only if the constants satisfy the supplementary symmetry condition: $c_{s,j}=c_{s,-j}$ for all $s$ and $j$.

\end{proposition}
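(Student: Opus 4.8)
The plan is to treat the two implications separately: the forward direction follows at once from the matrix characterisation of GNS-symmetry in Remark~\ref{remark_symmetry}, while the reciprocal rests on the commutation with the Woronowicz characters (Corollary~\ref{gns_stronger_kms}) together with their explicit form \eqref{suq2-form} on $SU_q(2)$. For the forward direction, suppose $\phi$ is hermitian and diagonal, $\phi(u^{(s)}_{jk})=c_{s,j}\delta_{jk}$ with real $c_{s,j}$. Then each characteristic matrix $\phi^{(s)}=[\phi(u^{(s)}_{jk})]_{j,k}$ is a real diagonal matrix, hence hermitian, and by Remark~\ref{remark_symmetry} this is exactly the condition for the hermitian functional $\phi$ to be GNS-symmetric. (Equivalently one checks $\phi\circ S=\phi$ directly: by \eqref{eq-star}, $(u^{(s)}_{jj})^*=u^{(s)}_{-j,-j}$, so $\phi\circ S(u^{(s)}_{jj})=c_{s,-j}$, which equals $\phi(u^{(s)}_{jj})=c_{s,j}$ because reality and hermiticity force $c_{s,j}=c_{s,-j}$, while off-diagonal coefficients are annihilated by both sides.)

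For the reciprocal, the key remark is that $\phi\circ S=\phi$ by itself is too weak: through \eqref{eq-star} it only produces $\phi(u^{(s)}_{jk})=(-q)^{j-k}\phi(u^{(s)}_{-k,-j})$, which does not force the off-diagonal entries to vanish. Instead I would invoke Corollary~\ref{gns_stronger_kms}, by which a GNS-symmetric $\phi$ commutes with all Woronowicz characters, $\phi\star f_z=f_z\star\phi$ for $z\in\C$. Evaluating both sides on $u^{(s)}_{jk}$ using $\Delta(u^{(s)}_{jk})=\sum_p u^{(s)}_{jp}\otimes u^{(s)}_{pk}$ and $f_z(u^{(s)}_{jk})=q^{2jz}\delta_{jk}$ from \eqref{suq2-form} gives
\[
q^{2kz}\,\phi(u^{(s)}_{jk})=(\phi\star f_z)(u^{(s)}_{jk})=(f_z\star\phi)(u^{(s)}_{jk})=q^{2jz}\,\phi(u^{(s)}_{jk}),
\]
so $\phi(u^{(s)}_{jk})\,(q^{2kz}-q^{2jz})=0$ for all $z$. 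Since $0<q<1$ we have $q^{2k}\neq q^{2j}$ whenever $j\neq k$, so choosing e.g.\ $z=1$ yields $\phi(u^{(s)}_{jk})=0$ for $j\neq k$. This is the asserted diagonal form \eqref{ex_gns_suq2} with $c_{s,j}:=\phi(u^{(s)}_{jj})$.

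It remains to characterise hermiticity among diagonal functionals, which I would do by computing $\phi^\#$ directly: from $(u^{(s)}_{jj})^*=u^{(s)}_{-j,-j}$ one gets $\phi^\#(u^{(s)}_{jj})=\overline{\phi(u^{(s)}_{-j,-j})}=\overline{c_{s,-j}}$ (off-diagonal entries vanishing on both sides), so $\phi$ is hermitian if and only if $c_{s,j}=\overline{c_{s,-j}}$; under the reality assumption on the constants this is precisely the stated symmetry $c_{s,j}=c_{s,-j}$. The one step that is genuinely specific to $SU_q(2)$, and that must be identified correctly, is the reciprocal: diagonality is not forced by the antipode but by the commutation with the Woronowicz characters, i.e.\ by the non-triviality of the one-parameter families $z\mapsto q^{2jz}$, which reflects the non-Kac nature of $SU_q(2)$ and would fail for a Kac-type quantum group where $f_z=\e$.
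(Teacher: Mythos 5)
Your proof is correct in both directions, and the forward direction via Remark \ref{remark_symmetry} is essentially the paper's own argument. In the reciprocal, however, you take a different route, and your stated reason for doing so is mistaken. The paper deduces diagonality directly from antipode-invariance: $\phi\circ S=\phi$ implies $\phi\circ S^2=\phi$, and on $SU_q(2)$ one has $S^2\big(u^{(s)}_{jk}\big)=q^{2(j-k)}u^{(s)}_{jk}$, so $\phi\big(u^{(s)}_{jk}\big)=q^{2(j-k)}\phi\big(u^{(s)}_{jk}\big)$ kills the off-diagonal entries. Your claim that $\phi\circ S=\phi$ is ``too weak'' is therefore false: the relation $\phi\big(u^{(s)}_{jk}\big)=(-q)^{j-k}\phi\big(u^{(s)}_{-k,-j}\big)$ that you extract from \eqref{eq-star}, applied twice, returns exactly $\phi\big(u^{(s)}_{jk}\big)=(-q)^{2(j-k)}\phi\big(u^{(s)}_{jk}\big)=q^{2(j-k)}\phi\big(u^{(s)}_{jk}\big)$, since $2(j-k)$ is an even integer. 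Your alternative --- invoking Corollary \ref{gns_stronger_kms} and the explicit Woronowicz characters \eqref{suq2-form} --- is nonetheless valid and reaches the same conclusion; it is just logically downstream, because that corollary is itself proved from $\phi=\phi\circ S^2$, so the two arguments rest on the same non-Kac feature of $SU_q(2)$. One small point to tighten: in the reciprocal you should observe that the diagonal values $c_{s,j}=\phi\big(u^{(s)}_{jj}\big)$ of a GNS-symmetric \emph{hermitian} functional are automatically real (combine $c_{s,j}=c_{s,-j}$, which follows from $\phi\circ S=\phi$ and $(u^{(s)}_{jj})^*=u^{(s)}_{-j,-j}$, with $c_{s,j}=\overline{c_{s,-j}}$ from hermiticity), rather than treating reality as an extra assumption; this is what the paper extracts from Remark \ref{remark_symmetry}.
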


\begin{proof}
 We calculate explicitly that $\phi$ of the form \eqref{ex_gns_suq2} is
invariant under the antipode:
 $$ \phi\circ S (u_{jk}^{(s)}) = \phi ((u_{kj}^{(s)})^*) = \overline{\phi
(u_{kj}^{(s)}))} = \overline{c_{s,j}}\delta_{jk}=c_{s,j}\delta_{jk}=\phi
(u_{jk}^{(s)}).$$

Conversely, suppose that $\phi\circ S=\phi$, then also  $\phi\circ S^2=\phi$.
On $SU_q(2)$ we have  $S^2(u_{jk}^{(s)})=q^{2(j-k)}u_{jk}^{(s)}$, thus
$\phi(u_{jk}^{(s)})=\phi\circ S^2(u_{jk}^{(s)})=q^{2(j-k)}\phi(u_{jk}^{(s)})$ and $\phi$ can have non-zero values only on the diagonal. By Remark \ref{remark_symmetry}, all $c_{s,j}$ must be real.

Finally, $c_{s,j} = \bar{c}_{s,j}= \overline{\phi(u_{jj}^{(s)})}$ and $ c_{s,-j}=\phi(u_{-j,-j}^{(s)})=\phi((u_{jj}^{(s)})^*)$ from which the last part follows.
\end{proof}

\subsection{Unbounded GNS-symmetric generator}

Let $\pi$ be the $*$-representation of $SU_q(2)$ on $\ell^2(\mathbb{N}\times
\mathbb{Z})$ given by
\begin{eqnarray*}
\pi (\alpha) e_{k,n} &= & \sqrt{1-q^{2k}}e_{k-1,n} \; (k\geq 1), \quad \pi (\alpha) e_{0,n}=0,\\
\pi (\alpha^*) e_{k,n} &= & \sqrt{1-q^{2k+2}}e_{k+1,n} \; (k\geq 0), \\
\pi (\gamma) e_{k,n}&= & q^k e_{k,n-1}, \quad \pi (\gamma^*) e_{k,n} = q^k e_{k,n+1},
\end{eqnarray*}
where $\{e_{k,n}; k\geq 0, n\in \mathbb{Z}\}$ is the standard orthonormal basis
of $\ell^2(\mathbb{N}\times \mathbb{Z})$. For a fixed  $0< \lambda <1$ let us
consider a Poisson type generator
$$\phi_\lambda (a) =\langle v_\lambda, (\pi -\e)(a) v_\lambda\rangle \quad \mbox{with} \,\, v_\lambda = \sum_{k=0}^\infty \lambda^k e_{k,0}.$$
The related cocycle $\eta_\lambda(a)= (\pi -\e)(a) v_\lambda$ is uniquely determined by the value on $\alpha^*$ (see \cite{schuermann+skeide98}), where it equals
\begin{eqnarray*}
\eta_\lambda(\alpha^*)&=& \sum_{k=0}^\infty \lambda^k (\pi -\e)(\alpha^*) e_{k,0} = \sum_{k=0}^\infty \lambda^k \big(\sqrt{1-q^{2k+2}}e_{k+1,0}- e_{k,0}\big) \\
&=& \sum_{k=1}^{\infty} \lambda^{k-1} \sqrt{1-q^{2k}}e_{k,0}- \sum_{k=0}^{\infty} \lambda^k e_{k,0}\\
&=& -e_{0,0}+\sum_{k=1}^{\infty} \big( \lambda^{k-1} \sqrt{1-q^{2k}}-\lambda^{k}  \big)e_{k,0}.
\end{eqnarray*}
Note that $\eta_\lambda(\alpha^*)\in H$ since
\begin{eqnarray*}
\|\eta_\lambda(\alpha^*)\|^2&=&
1+ \sum_{k=1}^{\infty} \big| \lambda^{k-1} \sqrt{1-q^{2k}}-\lambda^{k}  \big|^2
\leq 1+4\sum_{k=1}^{\infty} \lambda^{2(k-1)} <+\infty.
\end{eqnarray*}

Define also a cocyle $\eta_\infty$ by its value on $\alpha^*$:
$$\eta_\infty (\alpha^*) = -e_{0,0}+\sum_{k=1}^{\infty} (\sqrt{1-q^{2k}}-1)e_{k,0} =
-\sum_{k=0}^{\infty} (1-\sqrt{1-q^{2k}})e_{k,0}. $$
We shall show that $\eta_\infty (\alpha^*)\in H$ and that $ \eta_\lambda(\alpha^*) \rightarrow  \eta_\infty (\alpha^*)$ in $H$ when $\lambda \to 1^-$. For the first part, we check directly that
\begin{eqnarray*}
\|\eta_\infty (\alpha^*)\|^2&=&
\sum_{k=0}^{\infty} \big(1- \sqrt{1-q^{2k}}\big)^2
= \sum_{k=0}^{\infty} \frac{q^{4k}}{(1+ \sqrt{1-q^{2k}})^2}
< \frac{1}{1-q^4} <+\infty.
\end{eqnarray*}
Next, we show the convergence:
\begin{eqnarray*}
\lefteqn{\|\eta_\lambda (\alpha^*)-\eta_\infty(\alpha^*)\|^2
=
\| \sum_{k=1}^{\infty} \big( \lambda^{k-1} \sqrt{1-q^{2k}}-\lambda^{k}  - \sqrt{1-q^{2k}}+1)e_{k,0} \|^2 }\\
&=&
\sum_{k=1}^{\infty} \big|  (1-\sqrt{1-q^{2k}})(1-\lambda^{k-1})+ \lambda^{k-1}(1-\lambda) \big|^2 \\
&=&
\sum_{k=1}^{\infty} (1-\sqrt{1-q^{2k}})^2(1-\lambda^{k-1})^2 + (1-\lambda)^2\sum_{k=1}^{\infty}  \lambda^{2(k-1)} \\
& + & 2(1-\lambda)\sum_{k=1}^{\infty} (1-\sqrt{1-q^{2k}})(1-\lambda^{k-1})\lambda^{k-1}.
\end{eqnarray*}
Note that $1-\lambda^{k-1}= (1-\lambda)(\lambda^{k-2}+\ldots + 1) \leq (k-1)(1-\lambda)$. This implies
\begin{eqnarray*}
\lefteqn{\|\eta_\lambda (\alpha^*)-\eta_\infty(\alpha^*)\|^2
= }\\ &\leq &
(1-\lambda)^2\sum_{k=1}^{\infty} (k-1)^2 \frac{q^{4k}}{(1+\sqrt{1-q^{2k}})^2} + \frac{(1-\lambda)^2}{1- \lambda^{2}} \\
& + & 2(1-\lambda)^2\sum_{k=1}^{\infty} \frac{q^{2k}}{1+\sqrt{1-q^{2k}}}(k-1)\lambda^{k-1}
\\ &\leq &
(1-\lambda)^2\sum_{k=1}^{\infty} (k-1)^2 q^{4k} + \frac{1-\lambda}{1+ \lambda}
+ 2(1-\lambda)^2\sum_{k=1}^{\infty} q^{2k}(k-1)
\end{eqnarray*}
and we see that each term tends to 0 when $\lambda \to 1^-$.

\medskip
Let us now define by $\phi_\infty$ the functional related to the cocycle $\eta_\infty$ by the formula
$$\phi_\infty (ab)=\langle \eta_\infty (a^*),\eta_\infty(b) \rangle, \quad a,b \in \ker \e$$
 with the additional conditions that  $\phi_\infty(\1)=0$ and that the `drift'
part is zero (which remains to say that
$\phi_\infty(\alpha)=\phi_\infty(\alpha^*)\in \R$). This way $\phi_\infty$ is
uniquely determined on the whole of $\A={\rm Lin} \{\1, \alpha-\alpha^*, K_2\}$,
where $K_2$ is the linear span of products of two elements from $\ker \e$ (cf.
\cite{schuermann+skeide98}). By the Schoenberg correspondence, if
well-defined, $\phi_\infty$ is a generating functional of a L\'evy process.

To see that $\phi_\infty$ is  well-defined, we can check that on $K_2$ the
functional is just a limit of functionals related to $\eta_\lambda$. Indeed, if
$a,b\in \ker \e$ then
$$\phi_\infty (ab)= \langle \eta_\infty (a^*),\eta_\infty(b) \rangle
=\langle \lim_{\lambda \to 1^-} \eta_\lambda (a^*),\lim_{\mu \to 1^-} \eta_\mu(b) \rangle$$
and since both limits exist we have
 $$\phi_\infty (ab)=  \lim_{\lambda \to 1^-} \langle \eta_\lambda (a^*),\eta_\lambda (b) \rangle = \lim_{\lambda \to 1^-} \phi_\lambda (ab) = \lim_{\lambda \to 1^-} \langle v_\lambda, (\pi -\e)(ab) v_\lambda\rangle. $$
We conclude that
\begin{equation} \label{phi_on_k2}
 \phi_\infty (a)=\lim_{\lambda \to 1^-} \langle v_\lambda, (\pi -\e)(a) v_\lambda\rangle \quad \mbox{for} \quad a\in K_2.
\end{equation}

Our aim now is to show that $\phi_\infty$ is GNS-symmetric and unbounded.

\begin{proposition}
 The functional $\phi_\infty$ is GNS-symmetric.
\end{proposition}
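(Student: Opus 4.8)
The plan is to use that, for a hermitian functional, GNS-symmetry means precisely invariance under the antipode (see the definition in Section~\ref{sec_gns_kms}). Since $\phi_\infty$ is a generating functional it is automatically hermitian, so it remains to establish the single identity $\phi_\infty\circ S=\phi_\infty$. I would first prove this identity for each of the bounded approximating functionals $\phi_\lambda(a)=\langle v_\lambda,(\pi-\e)(a)v_\lambda\rangle$, $0<\lambda<1$, and then transport it to $\phi_\infty$ using the decomposition $\A={\rm Lin}\{\1,\alpha-\alpha^*,K_2\}$ together with the pointwise convergence \eqref{phi_on_k2}.

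For the approximants, introduce the $\mathbb Z$-grading of $\A$ determined by $\deg\gamma=-1$, $\deg\gamma^*=+1$, $\deg\alpha=\deg\alpha^*=0$; it is compatible with the defining relations, and the representation is graded in the sense that $\pi(\A_d)H_m\subseteq H_{m+d}$, where $H_m=\overline{{\rm span}}\{e_{k,m}:k\ge0\}$. The antipode respects this grading, because $S(\alpha)=\alpha^*$, $S(\alpha^*)=\alpha$, $S(\gamma)=-q\gamma$ and $S(\gamma^*)=-q^{-1}\gamma^*$ (all consequences of \eqref{eq-star}) multiply each generator by a scalar without changing its degree, and $\e\circ S=\e$. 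Since $v_\lambda\in H_0$, both $\phi_\lambda$ and $\phi_\lambda\circ S$ annihilate every homogeneous component of nonzero degree, so it suffices to compare them on the degree-zero subalgebra $\mathcal D$, generated by $\alpha,\alpha^*$ and $N:=\gamma^*\gamma$. On the three generators the antipode agrees with the involution ($S(\alpha)=\alpha^*$, $S(\alpha^*)=\alpha$, $S(N)=\gamma\gamma^*=N$), and since both maps are anti-multiplicative, $S(m)=m^*$ for every monomial $m$ in $\alpha,\alpha^*,N$; hence $\pi(S(m))=\pi(m)^*$. As $v_\lambda$ has real coordinates and $\pi(\alpha),\pi(\alpha^*),\pi(N)$ act by real matrices in the basis $\{e_{k,0}\}$, the number $\langle v_\lambda,\pi(m)v_\lambda\rangle$ is real, so $\langle v_\lambda,\pi(S(m))v_\lambda\rangle=\overline{\langle v_\lambda,\pi(m)v_\lambda\rangle}=\langle v_\lambda,\pi(m)v_\lambda\rangle$. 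Together with $\e\circ S=\e$ this gives $\phi_\lambda\circ S=\phi_\lambda$ on $\mathcal D$, and hence on all of $\A$.

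It then remains to pass to the limit and to treat the two remaining basis directions. Because $S$ maps $\ker\e$ into itself and is anti-multiplicative, $S(K_2)\subseteq K_2$; thus for $a\in K_2$ the convergence \eqref{phi_on_k2} yields $\phi_\infty(S(a))=\lim_{\lambda\to1^-}\phi_\lambda(S(a))=\lim_{\lambda\to1^-}\phi_\lambda(a)=\phi_\infty(a)$. On $\1$ one has $S(\1)=\1$, and on $\alpha-\alpha^*$ one has $S(\alpha-\alpha^*)=-(\alpha-\alpha^*)$ while $\phi_\infty(\alpha-\alpha^*)=0$ by the vanishing-drift normalization $\phi_\infty(\alpha)=\phi_\infty(\alpha^*)$, so $\phi_\infty\circ S$ and $\phi_\infty$ agree there as well. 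By the decomposition of $\A$ this proves $\phi_\infty\circ S=\phi_\infty$, and therefore $\phi_\infty$ is GNS-symmetric.

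I expect the main obstacle to be the middle step, namely the identity $\phi_\lambda\circ S=\phi_\lambda$: its two ingredients are the grading argument that eliminates all components of nonzero degree and the observation that $S$ restricts to the $*$-operation on the degree-zero subalgebra, the latter combined with the reality of the relevant matrix coefficients of $\pi$. The only additional point requiring care is the inclusion $S(K_2)\subseteq K_2$, which legitimizes taking the limit termwise.
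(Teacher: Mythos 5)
Your argument is correct. The paper takes a shorter route: it invokes Proposition \ref{generic_symmetric_suq2} (a hermitian functional with diagonal characteristic matrices is GNS-symmetric), so it only has to show that $\phi_\infty$ kills the off-diagonal coefficients $\alpha^m p(\gamma^*\gamma)\gamma^n$, $n\neq 0$; these lie in $K_2$ because $\gamma,\gamma^*\in K_2$, and then formula \eqref{phi_on_k2} together with the fact that $\pi(\gamma^{\pm 1})$ shifts the second index of $e_{k,n}$ makes the relevant matrix elements vanish. That index-shifting is exactly your $\mathbb{Z}$-grading, so the two proofs share their core mechanism. What you do differently is to bypass Proposition \ref{generic_symmetric_suq2} entirely and verify $\phi_\infty\circ S=\phi_\infty$ directly, first establishing that each approximant $\phi_\lambda$ is already antipode-invariant (via $S=*$ on monomials in $\alpha,\alpha^*,\gamma^*\gamma$ plus the reality of the matrix coefficients of $\pi$ on $H_0$ and of $v_\lambda$), and then transporting the identity through the limit using $S(K_2)\subseteq K_2$ and the decomposition $\A={\rm Lin}\{\1,\alpha-\alpha^*,K_2\}$. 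Your version is more self-contained (it needs neither the explicit form of the corepresentation coefficients nor the classification of GNS-symmetric functionals on $SU_q(2)$) and yields the extra fact that every $\phi_\lambda$ is GNS-symmetric; the paper's version is shorter given that Proposition \ref{generic_symmetric_suq2} is already available two pages earlier. Two small points you should make explicit: that the degree-zero component $\A_0$ is indeed \emph{spanned} by monomials in $\alpha,\alpha^*,\gamma^*\gamma$ (use the $q$-commutation relations to collect the $\gamma$'s and $\gamma^*$'s), since $S(w)=w^*$ fails for a general degree-zero word containing $\gamma$; and that the limit in \eqref{phi_on_k2} is applied to $S(a)$ as well as to $a$, which is legitimate precisely because $S(K_2)\subseteq K_2$, as you note.
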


\begin{proof}
 By Proposition \ref{generic_symmetric_suq2}, it is enough to show that
$\phi_\infty$ vanishes on the non-diagonal coefficients of the corepresentations
$u^{(s)}$, $s\in \frac12\mathbb{N}$. These coefficients are of the form
$b_{m,n}:=\alpha^m p(\gamma^*\gamma) \gamma^n$ for $m,n \in \mathbb{Z}$, (with
the notation $a^{-n}=(a^*)^n$ for $n>0$ and $p$ denoting a polynomial, see
\cite{pusz+woronowicz00}) and they are off diagonal iff $n\neq 0$. So it is
enough to check that $\phi_\infty$ vanishes on $\alpha^m (\gamma^*\gamma)^k
\gamma^n$ ($n\neq 0$).

We first observe that $\gamma, \gamma^*\in K_2$. Indeed, the relation
$\alpha\gamma = q \gamma \alpha$ together with $\gamma, \alpha-\1\in \ker \e$
imply that
$$ \gamma = \frac{q}{1-q}\gamma (\alpha-\1) - \frac1{1-q}(\alpha-\1)\gamma \in
K_2.$$
Therefore an element $\alpha^m (\gamma^*\gamma)^k \gamma^n$ belongs to $K_2$
provided $k \neq 0$ or $n\neq 0$. So the formula \eqref{phi_on_k2} can be
applied
\begin{eqnarray*}
  \phi_\infty (\alpha^m (\gamma^*\gamma)^k \gamma^n)=\lim_{\lambda \to 1^-}
\sum_{p,r=0}^\infty \lambda^{p+r} \langle e_{p,0}, \pi (\alpha)^m
\pi(\gamma^*\gamma)^k \pi(\gamma)^n) e_{r,0}\rangle \quad
\end{eqnarray*}
 Since $\pi(\gamma)$ and $\pi(\gamma^*)$ move (down and up, respectively) the
second index of the basis vectors $e_{k,n}$ and since none of $\pi(\alpha)$,
$\pi(\alpha^*)$ and $\pi(\gamma^*\gamma)$ move the second index, we immediately
see that if $n\neq 0$ then $ \pi (\alpha)^m \pi(\gamma^*\gamma)^k \pi(\gamma)^n
e_{r,0} \in \C \cdot e_{r+m,n}$, which is orthogonal to $e_{p,0}$ for any $m,k$
and $p,q$. So the sum under the limit equals to 0 and thus $\phi_\infty$ is of
the form \eqref{ex_gns_suq2}.
\end{proof}

\medskip

\begin{proposition}
 The functional $\phi_\infty$ is unbounded.
\end{proposition}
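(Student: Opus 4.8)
The plan is to exhibit a sequence of elements of uniformly bounded C${}^*$-norm on which $\phi_\infty$ blows up. Being the last component of a Sch\"urmann triple with cocycle $\eta_\infty$, the functional satisfies $\phi_\infty(b^*b)=\|\eta_\infty(b)\|^2$ for every $b\in\ker\e$ (put $a=b^*$ in the defining relation $\phi_\infty(ab)=\langle\eta_\infty(a^*),\eta_\infty(b)\rangle$, noting $b^*\in\ker\e$). Hence it suffices to produce $b_n\in\ker\e$ with $\|b_n^*b_n\|$ bounded but $\|\eta_\infty(b_n)\|\to\infty$.

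I would take $b_n=(\alpha^*)^n-\1$. Since $\alpha^*\alpha\le\1$ gives $\|\alpha\|\le 1$, we have $\|(\alpha^*)^n\|\le 1$, so $\|b_n^*b_n\|=\|b_n\|^2\le 4$, while $\e(b_n)=0$ because $\e(\alpha^*)=1$. As $\eta_\infty(\1)=0$, iterating the cocycle identity $\eta_\infty(ab)=\pi(a)\eta_\infty(b)+\eta_\infty(a)\e(b)$ with $\e(\alpha^*)=1$ yields
\[
\eta_\infty(b_n)=\eta_\infty\big((\alpha^*)^n\big)=\sum_{i=0}^{n-1}\pi(\alpha^*)^i\,\eta_\infty(\alpha^*).
\]

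The heart of the argument is a lower bound for this vector. Writing $c_k=1-\sqrt{1-q^{2k}}>0$ and $P_m=\prod_{j=1}^m\sqrt{1-q^{2j}}$ (with $P_0=1$), the relation $\pi(\alpha^*)e_{k,0}=\sqrt{1-q^{2k+2}}\,e_{k+1,0}$ shows that $\pi(\alpha^*)^i$ raises the first index by $i$ with weight $P_{k+i}/P_k$ and preserves the second index, so $\eta_\infty((\alpha^*)^n)$ stays in the slice spanned by $\{e_{m,0}\}$. Collecting the coefficient of $e_{m,0}$ for $0\le m\le n-1$ and using the factorization $\prod_{l=k+1}^{m}\sqrt{1-q^{2l}}=P_m/P_k$, I expect the clean expression
\[
\big\langle e_{m,0},\,\eta_\infty((\alpha^*)^n)\big\rangle=-P_m\sum_{k=0}^{m}\frac{c_k}{P_k},\qquad 0\le m\le n-1 .
\]
Every summand is strictly positive and the $k=0$ term equals $c_0/P_0=1$, so the sum is $\ge 1$; moreover $P_m$ decreases to the positive value $\kappa:=\sqrt{(q^2;q^2)_\infty}>0$, whence $P_m\ge\kappa$. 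Thus each such coefficient has modulus at least $\kappa$, and
\[
\|\eta_\infty(b_n)\|^2\ \ge\ \sum_{m=0}^{n-1}\big|\langle e_{m,0},\eta_\infty((\alpha^*)^n)\rangle\big|^2\ \ge\ n\kappa^2 .
\]
Consequently $\phi_\infty(b_n^*b_n)=\|\eta_\infty(b_n)\|^2\ge n\kappa^2\to\infty$ while $\|b_n^*b_n\|\le 4$, so $\phi_\infty$ does not extend to a bounded functional on $C(SU_q(2))$.

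The main obstacle is the \emph{uniform} lower bound on these coefficients: a naive triangle-inequality estimate bounding the tail $\sum_{k\ge1}c_k$ degenerates as $q\to 1^-$, since $\kappa\to 0$ while $\sum_k c_k\to\infty$. The telescoping factorization $\prod_{l=k+1}^{m}\sqrt{1-q^{2l}}=P_m/P_k$ is precisely what circumvents this, exhibiting the coefficient of $e_{m,0}$ as $-P_m$ times a sum of manifestly positive terms; no cancellation can occur, and the single $k=0$ term already forces the modulus to stay $\ge\kappa$ uniformly in $m$ and $n$.
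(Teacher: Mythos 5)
Your argument is correct, but it follows a genuinely different route from the paper's. The paper tests $\phi_\infty$ on $(\alpha^*)^m\alpha^m$: it rewrites this element as $(\1-\gamma^*\gamma)(\1-q^{-2}\gamma^*\gamma)\cdots(\1-q^{-2(m-1)}\gamma^*\gamma)$, expands via a $q$-binomial identity to see that $(\alpha^*)^m\alpha^m-\1\in K_2$, and then evaluates $\phi_\infty$ through the limit formula $\phi_\infty(a)=\lim_{\lambda\to 1^-}\langle v_\lambda,(\pi-\e)(a)v_\lambda\rangle$ on $K_2$, obtaining the explicit series $\sum_{k\ge 0}\big((1-q^{2k})\cdots(1-q^{2k-2m+2})-1\big)\le -(m-1)$. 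You instead test on $b_n^*b_n$ with $b_n=(\alpha^*)^n-\1\in\ker\e$, exploit the Sch\"urmann-triple identity $\phi_\infty(b^*b)=\|\eta_\infty(b)\|^2$, and compute $\eta_\infty((\alpha^*)^n)=\sum_{i=0}^{n-1}\pi(\alpha^*)^i\eta_\infty(\alpha^*)$ from the cocycle relation; the telescoping factorization $\prod_{l=k+1}^{m}\sqrt{1-q^{2l}}=P_m/P_k$ then exhibits each coefficient of $e_{m,0}$ ($0\le m\le n-1$) as $-P_m\sum_{k=0}^m c_k/P_k$ with all terms positive and the $k=0$ term equal to $1$, giving the uniform bound $\ge\kappa=\sqrt{(q^2;q^2)_\infty}>0$ and hence $\phi_\infty(b_n^*b_n)\ge n\kappa^2$. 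I checked the coefficient computation and the positivity argument; they are sound, and your appeal to $\phi_\infty(ab)=\langle\eta_\infty(a^*),\eta_\infty(b)\rangle$ is legitimate given that the paper has already established (just before this proposition) that $\phi_\infty$ is well defined on $K_2$ and that $\eta_\infty$ is a genuine cocycle on $\A$. What each approach buys: the paper's computation yields the exact value of $\phi_\infty$ on its test elements but needs the $q$-calculus identity and the $\lambda\to 1^-$ limit formula; yours avoids both, using only the cocycle structure and manifest positivity, at the cost of giving only a lower bound (which is all that is needed). Both obtain growth linear in the power of $\alpha^*$ against a uniformly bounded C${}^*$-norm.
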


\begin{proof}
We shall show that $|\phi_\infty({\alpha^*}^{m}{\alpha}^{m})| \to +\infty$ when $m\to +\infty$. Since
$$\|{\alpha^*}^m{\alpha}^{m}\|_\mathsf{A}\leq \|\alpha\|^{2m}_\mathsf{A}\leq
1,$$
this will imply that $\phi_\infty$ is unbounded.

Observe first that
\begin{eqnarray*}
{\alpha^*}^m\alpha^m &=& {\alpha^*}^{(m-1)}(\alpha^*\alpha){\alpha}^{m-1}
={\alpha^*}^{(m-1)}(\1-\gamma^*\gamma){\alpha}^{m-1}\\
&=& {\alpha^*}^{(m-1)}{\alpha}^{m-1}(\1-q^{-2(m-1)}\gamma^*\gamma)
\end{eqnarray*}
and by induction
$${\alpha^*}^m\alpha^m = (\1-\gamma^*\gamma)(\1-q^{-2}\gamma^*\gamma)\ldots (\1-q^{-2(m-1)}\gamma^*\gamma), \quad m\geq 1.$$
Applying the standard formula from the $q$-calculus (cf. \cite[Equation (0.3.5)]{koekoek+swarttouw94}:
$$ (a;q)_n = \sum_{k=0}^n \frac{(q^2;q^2)_n}{(q^2;q^2)_k(q^2;q^2)_{n-k}} q^{k(k-1)} (-a)^k$$
we arrive at
\begin{eqnarray*}
{\alpha^*}^m\alpha^m-\1 &=& (\1-\gamma^*\gamma)\ldots (\1-q^{-2(m-1)}\gamma^*\gamma)-\1 \\
&=& \sum_{k=1}^{m} (-1)^k \frac{(q^2;q^2)_{m}}{(q^2;q^2)_k(q^2;q^2)_{m-k}} q^{k(k-1)} (\gamma^*\gamma)^k.
\end{eqnarray*}
We see that each term under the sum contains $\gamma^*\gamma$ and thus belongs to $K_2$.

Now that we have proved that ${\alpha^*}^m\alpha^m-\1\in K_2$, we can
apply the formula \eqref{phi_on_k2} to calculate the value of $\phi_\infty$ on
${\alpha^*}^m\alpha^m$. Namely,
\begin{eqnarray*}
 \lefteqn{\phi_\infty ({\alpha^*}^m\alpha^m) = \phi_\infty ({\alpha^*}^m\alpha^m-\1) }\\
&=& \lim_{\lambda \to 1^-} \sum_{j,k=0}^\infty \lambda^{j+k} \langle e_{j,0}, (\pi -\e)({\alpha^*}^m\alpha^m-\1) e_{k,0}\rangle \\
&=& \lim_{\lambda \to 1^-} \sum_{j,k=0}^\infty \lambda^{j+k} \langle e_{j,0},  \big[(I_H-\pi(\gamma^*\gamma))\ldots (I_H-q^{-2(m-1)}\pi(\gamma^*\gamma))-I_H \big] e_{k,0}\rangle \\
&=& \sum_{k=0}^\infty \big( (1-q^{2k})\ldots (1-q^{2k-2m+2})-1 \big) \\
&=& - \sum_{k=0}^{m-1} 1 - \sum_{k=m}^\infty \big(1- (1-q^{2k})\ldots (1-q^{2k-2m+2})\big).
\end{eqnarray*}
We finally note that the infinite sum is non-negative, and so
$$|\phi_\infty ({\alpha^*}^m\alpha^m)| = m-1+ \sum_{k=m}^\infty \big(1- (1-q^{2k})\ldots (1-q^{2k-2m+2})\big) \geq m-1. $$
\end{proof}

\subsection{KMS-symmetry}

 In case of $C(SU_q(2))$ it is easy to check that a hermitian $\phi$ is
KMS-symmetric iff for each $s\in \frac12 \mathbb{N}$ the matrix
$\phi_q^{(s)}=[q^j\phi (u^s_{jk})]$ is hermitian.
 Moreover, if a functional $\phi$ is hermitian and KMS-symmetric, then the values
of  $\phi$ on the corepresentation matrix $u^{(s)}$ are determined by the values
$\phi(u_{jk}^{(s)})$ for $|k|\leq j$ and the conditions
 $$ \phi(u_{-j,-k}^{(s)})=(-q)^{j-k}\overline{\phi(u_{jk}^{(s)})} \qquad
\mbox{and} \qquad  \phi(u_{kj}^{(s)})=q^{j-k}\overline{\phi(u_{j,k}^{(s)})}. $$
 Below we provide an example of a KMS-symmetric generator which is not
GNS-symmetric.

\begin{example}[KMS-symmetric generator which is not GNS-symmetric] \label{ex_KMS_not_sym}
 Let us consider the Poisson type generating functional on $SU_q(2)$
 $$\phi(a)=\langle e_k, (\rho_\theta-\e)(a)e_k\rangle,$$
 where $\rho_\theta$ is the infinite-dimensional representation of $C(SU_q(2))$
on $\ell^2$, described in Example \ref{rep_opp}, $\theta\in [0, 2\pi)$, and
$e_k$ is the $k$th standard orthonormal basis vector of $\ell^2$.

If (and only if) $\theta=\frac{\pi}{2}$ or $\theta=\frac{3\pi}{2}$, then $\phi$ is KMS-symmetric.
Indeed, by Theorem \ref{thm_schurmann_triples} and Example \ref{rep_opp} the
condition for the generating functional to be KMS-symmetric, $\phi(a)=\phi\circ
R(a)$, reduces to
\begin{equation} \label{poisson_KMS_ex}
\langle e_k, (\rho_\theta(a)-\rho_{\theta+\pi}(a^*)) e_k \rangle = \e(a)-\e(a^*)
\end{equation}
for any $a\in \A$. For $a=\gamma$ the left hand side of \eqref{poisson_KMS_ex} is
 $$\langle e_k, \big(\rho_\theta(\gamma)-
\rho_{\theta+\pi}(\gamma^*)\big)e_k\rangle = \langle e_k, (e^{i\theta}
-e^{-i(\theta+\pi)})q^ke_k\rangle = (e^{i\theta}+e^{-i\theta}) q^k,$$
which equals $|\e(\gamma)|^2=0$ only when $\theta=\frac{\pi}{2}$ or
$\theta=\frac{3\pi}{2}$.
 For such $\theta$ we check by a direct calculation that the equation
\eqref{poisson_KMS_ex} holds true for each element of the form $a=(\alpha^*)^l
\gamma^m (\gamma^*)^n$ (such elements form a linear basis of $\A$).

The Poisson generator related to $\rho_\theta$ with $\theta\in \{
\frac{\pi}{2}, \frac{3\pi}{2}\}$ is not GNS-symmetric since
$S(\gamma)=-q\gamma$ and $q\neq 1$ imply $\phi(\gamma)\neq \phi\circ
S(\gamma)$.

\medskip

For $k=0$ and $\theta=\frac{\pi}{2}$ we can calculate explicitly the values of
the generating functional \eqref{poisson_KMS_ex}. Namely, using the explicit
formula for the coefficient of the corepresentations (cf.
\cite[(B.19)]{pusz+woronowicz00}), the Vandermonde summation formula
(cf.\,(0.5.9) in \cite{koekoek+swarttouw94}) and the standard $q$-transformation
(cf.\,(0.2.14) therein), we get
$$\phi(u_{jk}^{(s)})=\left \{\begin{array}{ll}
-1, & j=k \\
i^{2s} q^{(s-j)(s+j+1)} -\delta_{0,j}, & j\geq 0, k=-j \\
i^{-2s} q^{(s-j)(s+j+1)-2j} , & j< 0, k=-j \\
0, & \mbox{otherwise.}
\end{array}  \right. $$
In particular, it has non-zero entries only on the diagonal and the anti-diagonal.

The eigenvalues of the matrix $\phi^{(s)}=[\phi(u_{jk}^{(s)})]$, $\phi(a)=\langle e_0, (\rho_{\frac{\pi}{2}}-\e)(a)e_0\rangle$ are:
$$
 \lambda_{j}^{+} = -(1+q^{(s-j)(s+j+1)+j}), \quad \lambda_{j}^{-} = -(1-q^{(s-j)(s+j+1)+j})
 $$
with $j=\frac12,\frac32, \ldots ,\frac{k}{2}$ when $s=\frac{k}{2}$ ($k\in 2\mathbb{N}+1$) or
$$
 \lambda_0=-1+(-1)^s q^{s(s+1)}, \quad \lambda_{j}^{+} = -(1+q^{(s-j)(s+j+1)+j}), \quad \lambda_{j}^{-} = -(1-q^{(s-j)(s+j+1)+j})
$$
for $j=1,3, \ldots , k$ when $s=k$ ($k\in \mathbb{N}$).
\end{example}

\subsection{$\ad$-invariance}
 We already noted that $\ad$-invariance is a strong constraint on the
functional. Namely, it is necessarily a multiple of the identity on each of the
corepresentation matrices, in particular of diagonal form. A comparison of
this notion with that of GNS-symmetry (suggested by Proposition
\ref{generic_symmetric_suq2}), shows that $\ad$-invariant generating functionals
of a L\'evy process on $C(SU_q(2))$ are necessarily GNS-symmetric.

\begin{corollary}
 Let $\phi$ be a functional on $SU_q(2)$. If $\phi$ is \ad-invariant and
hermitian, then $\phi$ is GNS-symmetric.
\end{corollary}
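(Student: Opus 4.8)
The plan is to reduce the claim to the reality of the diagonal constants that ad-invariance produces. First I would invoke Proposition \ref{prop_adinv_form}: since $\phi$ is \ad-invariant its characteristic matrices are scalar, i.e.\ there exist constants $c_s\in\mathbb{C}$ (indexed by $s\in\frac12\mathbb{N}$) with
\[
\phi(u^{(s)}_{jk}) = c_s\,\delta_{jk}\qquad (-s\le j,k\le s).
\]
By Remark \ref{remark_symmetry}, a hermitian functional is GNS-symmetric exactly when every characteristic matrix $\phi^{(s)}=c_s\,\mathrm{Id}$ is a hermitian matrix, and since a scalar multiple of the identity is hermitian precisely when the scalar is real, the whole statement reduces to showing that each $c_s$ lies in $\mathbb{R}$.

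Establishing the reality of the $c_s$ is where the specific $*$-structure of $SU_q(2)$ enters, and this is the only point where the argument uses more than the two general results cited above. I would use the explicit involution \eqref{eq-star}, namely $(u^{(s)}_{jk})^*=(-q)^{k-j}u^{(s)}_{-j,-k}$. Specialising to a diagonal coefficient $j=k$ gives $(-q)^{0}=1$ and a diagonal target, so
\[
(u^{(s)}_{jj})^* = u^{(s)}_{-j,-j}.
\]
Combining this with the hermiticity hypothesis $\phi(a^*)=\overline{\phi(a)}$ then yields
\[
\overline{c_s} = \overline{\phi(u^{(s)}_{jj})} = \phi\big((u^{(s)}_{jj})^*\big) = \phi(u^{(s)}_{-j,-j}) = c_s,
\]
where the last equality is again ad-invariance. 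Hence every $c_s$ is real, each $\phi^{(s)}=c_s\,\mathrm{Id}$ is hermitian, and $\phi$ is GNS-symmetric.

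I would stress that the main (and essentially only) subtlety is that this conclusion is genuinely special to $SU_q(2)$: ad-invariance alone forces $\phi$ to be supported on the diagonal, but for a general compact quantum group hermiticity relates the characteristic matrix of $u^{(s)}$ to that of its contragredient $u^{(s^{\mathrm c})}$ rather than to $u^{(s)}$ itself, so the reality of the $c_s$ need not follow. What makes the computation close up here is precisely that on $SU_q(2)$ the adjoint of a diagonal coefficient $u^{(s)}_{jj}$ is again a diagonal coefficient of the \emph{same} corepresentation (the scalar $(-q)^{k-j}$ being $1$ on the diagonal), so conjugation preserves the class of functionals of the form $c_s\delta_{jk}$ and directly forces $c_s=\overline{c_s}$.
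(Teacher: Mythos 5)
Your proof is correct and follows essentially the same route as the paper's: Proposition \ref{prop_adinv_form} forces $\phi(u^{(s)}_{jk})=c_s\delta_{jk}$, and the diagonal case of the relation $(u^{(s)}_{jk})^*=(-q)^{k-j}u^{(s)}_{-j,-k}$ combined with hermiticity gives $c_s=\overline{c_s}$. The only cosmetic difference is that you conclude via Remark \ref{remark_symmetry} (hermitian characteristic matrices) where the paper cites Proposition \ref{generic_symmetric_suq2}; these are interchangeable here.
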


\begin{proof}
 By Proposition \ref{prop_adinv_form}, $\phi$ is of the form
$\phi(u_{jk}^{(s)})=c_s \delta_{jk}$. By hermiticity, $ c_s =\phi(u_{jj}^{(s)})
=\phi \big( (u_{-j,-j}^{(s)})^* \big) = \overline{\phi  (u_{-j,-j}^{(s)})}=
\bar{c}_s$ and we conclude by Proposition \ref{generic_symmetric_suq2} that
$\phi$ is GNS-symmetric.
\end{proof}

The following example shows that the map $\phi \to \phi_{\rm ad}$ preserves
neither hermiticity nor positivity.
\begin{example} \label{ex_adinvariance_notherm}
 Let $\phi$ be the functional on $SU_q(2)$ defined by $\phi (\alpha)=e^{it}$,
$\phi(\alpha^*)=e^{-it}$ and zero otherwise, where $t \not\in 2\pi \mathbb{Z}$.
Then $\phi_{\ad}$ is ad-invariant and $\phi_{\ad}(\alpha)=\phi_\ad(\alpha^*)=
(1-q^2)^{-1} (e^{it} + q^2 e^{-it})$, so it is not hermitian.
\end{example}


\section*{Acknowledgements}

U.F.\ and A.K.\ would like to thank the Politecnico di Milano for the hospitality during their visits. 
They also thank the GDRE GREFI GENCO for the financial support for these visits. The research was supported by the MIUR project PRIN 2009 N. 2009KNZ5FK-004 "Equazioni alle derivate parziali degeneri o singolari:
metodi metrici".
\par\noindent
The research was mainly conducted during A.K.'s post-doctoral stay in
Laboratoire de Math\'ematiques de Besancon, supported by the Region
Franche-Comt\'e.
\par\noindent
A.K.\ was also partially supported by the Polish National Research Center (NCN)
post-doctoral internship no. DEC-2012/04/S/ST1/00102. U.F.\ was supported by the
ANR Project OSQPI (ANR-11-BS01-0008).
\par\noindent
We are also indebted to Makoto Yamashita for bringing the reference \cite[Lemma
4.1]{voigt2011} to our attention.
\par\noindent
We wish to thank the referee for his/her scrupulous work.


\end{document}